\def\MakeStep#1{\par\noindent\texttt{#1}}
\def\MyOutline#1{\ifnum\MyOutlineCount>0 #1\fi}
\def\natural{{\mathbb N}}
\def\real{{\mathbb{R}}}
\def\sphere{{\mathbb{S}}}
\def\ball#1,#2.{B(#1,#2)} 
\def\clball#1,#2.{\bar B(#1,#2)} 
\def\uball#1,#2,#3.{B_{#3}(#1,#2)} 
\def\cluball#1,#2,#3.{\bar B_{#3}(#1,#2)} 
\def\lebmeas#1.{{\mathcal L}^{\setbox0=\hbox{$#1\unskip$}\ifdim\wd0=0pt 1
    \else #1\fi}} 
\def\hmeas#1.{\mathscr{H}^{\setbox0=\hbox{$#1\unskip$}\ifdim\wd0=0pt 1
    \else #1\fi}} 
\newcommand{\on}{\:\mbox{\rule{0.1ex}{1.2ex}\rule{1.1ex}{0.1ex}}\:}
\def\Xint#1{\mathchoice
   {\XXint\displaystyle\textstyle{#1}}%
   {\XXint\textstyle\scriptstyle{#1}}%
   {\XXint\scriptstyle\scriptscriptstyle{#1}}%
   {\XXint\scriptscriptstyle\scriptscriptstyle{#1}}%
   \!\int}
\def\XXint#1#2#3{{\setbox0=\hbox{$#1{#2#3}{\int}$}
     \vcenter{\hbox{$#2#3$}}\kern-.5\wd0}}
\def\av{\Xint-}
\def\glip#1.{{\bf L}(#1)} 
\def\glipdec#1,#2.{{\bf L}_{#2}(#1)}
\def\biglip{\text{\normalfont Lip}} 
\def\biglipdec#1.{\text{\normalfont Lip_{#1}}}
\def\smllip{\text{\normalfont lip}} 
\def\pcreatenrm#1#2{\expandafter\def\csname
  #1nrm\endcsname##1.{\left\|##1\right\|_{#2}} \expandafter\def\csname
  #1nrmname\endcsname{\left\|\,\cdot\,\right\|_{#2}}} 
\DeclareMathOperator\dom{dom}
\DeclareMathOperator\diam{diam}
\DeclareMathOperator\len{len}
\DeclareMathOperator\slen{slen} 
\def\dym#1,#2.{\setbox1=\hbox{$#1\unskip$}\text{\normalfont{Dym}}_{\ifdim\wd1=0pt
    n \else #1\fi}(#2)}
\def\gates#1.{\setbox1=\hbox{$#1\unskip$}\text{\normalfont{Gates}}({\ifdim\wd1=0pt
    Q \else #1\fi})}
\def\cell#1.{\setbox1=\hbox{$#1\unskip$}\text{\normalfont{Cell}}({\ifdim\wd1=0pt
    Q \else #1\fi})}
\def\jumpp#1.{\setbox1=\hbox{$#1\unskip$}\text{\normalfont{Jpp}}({\ifdim\wd1=0pt
    Q \else #1\fi})}
\def\tjumpp#1.{\setbox1=\hbox{$#1\unskip$}\text{\normalfont{TJpp}}({\ifdim\wd1=0pt
    Q \else #1\fi})}
\def\chrom#1.{\setbox1=\hbox{$#1\unskip$}\text{\normalfont{Chr}}({\ifdim\wd1=0pt
    Q \else #1\fi})}
\def\lift#1.{\setbox1=\hbox{$#1\unskip$}\text{\normalfont{Lift}}({\ifdim\wd1=0pt
    \gamma_l \else #1\fi})}
\def\ccover#1.{\setbox1=\hbox{$#1\unskip$}\text{\normalfont{Cov}}({\ifdim\wd1=0pt
    Q \else #1\fi})}
\def\todoub#1.{\setbox1=\hbox{$#1\unskip$}\text{\normalfont{ToDouble}}({\ifdim\wd1=0pt
    Q \else #1\fi})}
\def\subdiv#1.{\setbox1=\hbox{$#1\unskip$}\text{\normalfont{Subdiv}}({\ifdim\wd1=0pt
    Q \else #1\fi})}
\def\chrsheet{\mathcal{C}}
\def\covsheet{\mathcal{B}}
\def\gtsheet{\mathcal{G}}
\def\disc#1,#2.{\setbox1=\hbox{$#1\unskip$}\setbox2=\hbox{$#2\unskip$}
  \text{\normalfont{Disc}}(\ifdim\wd1=0pt \varepsilon \else #1\fi, 
  \ifdim\wd2=0pt r \else #2\fi)}
\def\grid#1.{\text{\normalfont{Grid}}(#1)}
\def\fund#1,#2,#3.{\setbox2=\hbox{$#2\unskip$}\setbox3=\hbox{$#3\unskip$}
     \text{\normalfont Fund}(#1, \ifdim\wd2=0pt \varepsilon \else #2 \fi, 
   \ifdim\wd3=0pt r \else #3 \fi)}
\def\fundsub#1,#2,#3,#4.{\setbox2=\hbox{$#2\unskip$}\setbox3=\hbox{$#3\unskip$}
     \text{\normalfont Fund}_{#4}(#1, \ifdim\wd2=0pt \varepsilon \else #2 \fi, 
     \ifdim\wd3=0pt r \else #3 \fi)}
\def\skeleton#1,#2.{\setbox1=\hbox{$#1\unskip$}\setbox2=\hbox{$#2\unskip$}
  \text{\normalfont SK}_{\ifdim\wd1=0pt 2 \else #1\fi}(\ifdim\wd2=0pt
  X_j\else #2\fi)}
\def\hskeleton#1,#2.{\setbox1=\hbox{$#1\unskip$}\setbox2=\hbox{$#2\unskip$}
  \text{\normalfont HSK}_{\ifdim\wd1=0pt 2 \else #1\fi}(\ifdim\wd2=0pt
  X_j\else #2\fi)}
\def\hharmonic#1,#2.{\setbox1=\hbox{$#1\unskip$}\setbox2=\hbox{$#2\unskip$}
  \text{\normalfont H}_{\ifdim\wd1=0pt j \else #1\fi}(\ifdim\wd2=0pt f
  \else #2\fi)}
\def\gharmonic#1,#2.{\setbox1=\hbox{$#1\unskip$}\setbox2=\hbox{$#2\unskip$}
  \text{\normalfont G}_{\ifdim\wd1=0pt j \else #1\fi}(\ifdim\wd2=0pt f
  \else #2\fi)}
\def\ph{\text{\normalfont ph}}
\def\jp{\text{\normalfont jp}} 
\def\jph{\text{\normalfont jph}} 
\def\badset#1.{\setbox1=\hbox{$#1\unskip$}\text{\normalfont{Bad}}({\ifdim\wd1=0pt
    X_{l-1} \else #1\fi})}
\DeclareMathOperator\cpend{end}
\DeclareMathOperator\cpstart{sta}
\def\Eudec{_\text{\normalfont Euc}}
\def\greendec{_\text{\normalfont {green}}}
\def\egreendec{_{\text{\normalfont {green}},\varepsilon}}
\def\reddec{_\text{\normalfont {red}}}
\def\hardec{_\text{\normalfont {Har}}}
\def\baddec{_\text{\normalfont {bad}}}
\DeclareMathOperator\sgn{sgn}
\def\proj#1,#2.{\setbox1=\hbox{$#1\unskip$}\setbox2=\hbox{$#2\unskip$}\pi_{{\ifdim\wd1=0pt
      \infty\else #1\fi}, {\ifdim\wd2=0pt
      0\else #2\fi}}}
\def\projinv#1,#2.{\setbox1=\hbox{$#1\unskip$}\setbox2=\hbox{$#2\unskip$}\pi^{-1}_{{\ifdim\wd1=0pt
      \infty\else #1\fi}, {\ifdim\wd2=0pt
      0\else #2\fi}}}
\def\munsplit{8mu} 
\numberwithin{equation}{section} 
\theoremstyle{plain}
\newtheorem{lem}[equation]{Lemma}
\newtheorem{thm}[equation]{Theorem}
\theoremstyle{definition}
\newtheorem{defn}[equation]{Definition}
\newtheorem{constr}[equation]{Construction}
\newtheorem{macrotest}[equation]{Macro Test}
\theoremstyle{remark}
\newtheorem{rem}[equation]{Remark}
\begin{document}
\title[PI-unrectifiability]{An example of a differentiability space which is PI-unrectifiable}
\author{Andrea Schioppa}
\keywords{differentiation, Lipschitz functions, harmonic functions}
\subjclass[2010]{53C23, 58C20}
\begin{abstract}
  We construct a (Lipschitz) differentiability space which has at
  generic points a disconnected tangent and thus does not contain
  positive measure subsets isometric to positive measure subsets of
  spaces admitting a Poincar\'e inequality. We also prove that
  $l^2$-valued Lipschitz maps are differentiable a.e., but there are
  also Lipschitz maps taking values in some other Banach spaces having
  the Radon-Nikodym property which fail to be differentiable on sets
  of positive measure.
\end{abstract}
\maketitle
\tableofcontents
\section{Introduction}
\label{sec:intro}
\subsection{Overview}
\label{subsec:overview}
\MyOutline{
\begin{enumerate}
\item Opening crap $\checkmark$
\item HK, Ch, ChKl $\checkmark$
\item Keith $\checkmark$
\item Surge of activity $\checkmark$
\end{enumerate}
}
This paper deals with the foundations of first-order calculus in
metric measure spaces. In this work we empirically address the
question of whether a Poincar\'e inequality is needed, at the
infinitesimal level, to have a Rademacher-like Theorem on the
a.e.~differentiability of Lipschitz functions. These results were
announced in~\cite{schioppa_neck_poinc}.
\par In the seminal~\cite{heinonen98} Heinonen and Koskela
introduced the notion of PI-spaces, i.e.~a class of metric measure
spaces which satisfy an abstract version of the Poincar\'e
inequality. In the remarkable~\cite{cheeger99} Cheeger proved that
in PI-spaces it is possible to develop first-order calculus;
specifically he proved a version of Rademacher's Theorem on the
a.e.~differentiability of real-valued Lipschitz
functions. Later~\cite{cheeger_kleiner_radon} Cheeger and Kleiner were even able, for PI-spaces,
to prove the a.e.~differentiability of Lipschitz functions which take
value in Banach spaces having the Radon-Nikodym property.
\par In~\cite{keith04} Keith introduced an analytic condition,
the \textbf{Lip-lip inequality} (later shown to self-improve to an equality)
and used it to prove a Rademacher Theorem on the
a.e.~differentiability of real-valued Lipschitz functions; the spaces
satisfying the conclusion of the differentiability theorem of Keith
will be called here \textbf{differentiability spaces} (the structure
was named by Keith \emph{(strong) measurable differentiable
  structure}, but had been actually singled out without giving it a name by
Cheeger in~\cite{cheeger99}).
\par Keith claimed to have generalized Cheeger's result. From the
technical standpoint his claim was factual: the Lip-lip inequality
appears to be a weaker condition than the Poincar\'e inequality and so
his argument his more general than the one given in~\cite{cheeger99}. However, his claim was not supported by
\emph{empirical evidence}: to the best of our knowledge all known examples
of differentiability  spaces are \textbf{PI-rectifiable}; i.e.~they
can be decomposed into a countable union of positive-measure subsets
of PI-spaces.
\par In the last five years there has been a surge of work on
differentiability spaces: \cite{bate_speight,bate-diff,deralb,dim_blow,cks_metric_diff,bate_rnp_new_definition,sylvester-rnp}. Despite this theoretical progress there is a
substantial gap between the theory and the structural properties exhibited by
known examples. This has led to the following
question~\cite{cks_metric_diff} (here rephrased as a conjecture); 
disclaimer: to the best of my knowledge~\cite{cks_metric_diff}
is the first place where the question has been
written down; I learnt it from Bruce Kleiner, but it is also possible
to have been considered before by others, for example
in~\cite{heinonen07} Heinonen said it was important to understand the
conditions needed to have a Rademacher Theorem.
\begin{description}
\item[(ConjPIRect)] Any differentiability space is PI-rectifiable; in
  particular a.e.~its tangents/blow-ups are PI-spaces.
\end{description}
The \textbf{(ConjPIRect)} has been recently proved in the
beautiful~\cite{sylvester-rnp}
 \emph{under the additional assumption} that
Lipschitz functions taking values in Banach spaces with the
Radon-Nikodym property are differentiable. In this paper our goal is
to \emph{disprove} \textbf{(ConjPIRect)}.
\subsection{The Result}
\label{subsec:result}
In this paper we construct an example of a metric measure space
$(X_\infty,\mu_\infty)$ such that:
\begin{description}
\item[(TritanopeExa)] Any Lipschitz map $f:X_\infty\to l^2$ is
  differentiable $\mu_\infty$-a.e., but $(X_\infty,\mu_\infty)$ is
  PI-unrectifiable; moreover at $\mu_\infty$-a.e.~point it has
  \emph{a} tangent which is \emph{not} topologically connected.
\end{description}
The fact that at $\mu_\infty$-a.e.~there is a topologically
disconnected tangent is Theorem~\ref{thm:disc-tangs}; this immediately
implies that $X_\infty$ cannot contain a positive measure subset $S$
of a PI-space: at $\mu_\infty$-a.e.~$p\in S$ the tangents of
$X_\infty$ would then be PI-spaces, which are known~\cite{cheeger99}
to be quasi-convex and hence connected. 
\par For expository reason we first prove the
$\mu_\infty$-a.e.~differentiability of real-valued Lipschitz
functions, Theorem~\ref{thm:real-diff}, and reserve the more technical
details for $l^2$-valued maps to Theorem~\ref{thm:l2-diff}.
\subsection{Outline}
\label{subsec:outline}
$X_\infty$ is constucted as an \textbf{inverse limit system}. The
basic operation is similar to Example~1.2 of~\cite{cheeger_inverse_l1}
(see~\cite{lang_plaut} for the metric properties of the space
and~\cite{cheeger_inverse_poinc}for the proof the Poincar\'e
inequality) or the
\emph{Laaksofolds} of~\cite{lee-laaksofolds}. Here we essentially
double a $3$-dimensional cell generating a diamond-like space,
Construction~\ref{constr:nqc-diamond}. However we do \emph{not} take
the path metric, but \emph{squeeze} closely the centers of the two
cells: this destroys the connectedness of some tangents.
\par The proof of differentiability requires new ideas as the usual
arguments~\cite[\# 3.1.6]{federer_gmt}
or\cite{cheeger_kleiner_radon,bate_rnp_new_definition} 
require joining
pairs of points by quasi-geodesics and constructing the derivative on
these curves. Our argument is \emph{functional}: we show that if
$f:X_\infty\to l^2$ is Lipschitz, it must \emph{eventually} collapse the centers of the
doubled cells faster than they are separated in the ambient space,
compare Theorems~\ref{thm:gate-collapse}
and~\ref{thm:l2-gate-collapse}. This collapsing argument is based on
some elementary PDE, see Lemmas~\ref{lem:energy-lwbd}
and~\ref{lem:l2-energy-lwbd} and might be regarded as a
\emph{tail-recursive} version of quantitative differentiation, see~\cite{cheeger-quant-diff}. 
\subsection{Questions}
\label{subsec:questions}
Here are some questions that hopefully can give the reader some food
for thought.
\begin{description}
\item[(Q1)] Our example has \textbf{analytic dimension} 3, i.e.~the
  gradient has three components. The techniques of the
  forthcoming~\cite{schioppa-pi-prescr}
 suggest that one can (with a lot of
  technical overhead) modify this example to get analytic dimension
  1. However, at the moment we only have examples where the
  Assouad-Nagata dimension~\cite{lang_nagata} is 3, can it be
  lowered to 1?
\item[(Q2)] What is the relationship between the
  $\mu_\infty$-a.e.~differentiability for $l^1$ and $l^2$-valued maps?
\end{description}
Note by~\cite{bate_rnp_new_definition} there are a Banach space $B$ having the
Radon-Nikodym property and a non-a.e.~differentiable Lipschitz map
$f:X_\infty\to B$. As remarked in~\cite{dim_blow} the
construction in~\cite{bate_rnp_new_definition} can be slightly improved to
yield non-differentiability in a canonical Banach space having the
Radon-Nikodym property:
\begin{equation}
  \label{eq:Sem}
  \text{Sem} = \bigoplus_{\substack{n=1\\ l^1}}^\infty l_n^\infty,
\end{equation}
the $l^1$-sum of copies of $\real^n$, with the $l^\infty$-norm, whose
dimension progressively increases to $\infty$. Thus differentiability
in $\text{Sem}$ is stronger than in $l^2$, and \textbf{(Q2)} asks how
$l^1$ stands compared to $l^2$.
\subsection*{Notational conventions}
\label{subsec:notation}
We use the convention $a\simeq b$ to say that $a/b,b/a\in[C^{-1},C]$
where $C$ is a universal constant; we similarly use notations like
$a\lesssim b$ or
$a\gtrsim b$. The notation $\hmeas k.$ stands for the $k$-dimensional
Hausdorff measure and $\lebmeas k.$ for the $k$-dimensional Lebesgue
measure. Given a map $f:X\to Y$ and a measure $\mu$ on $X$,
$f_{\#}\mu$ denotes the push-forward of $\mu$ to a measure on $Y$;
finally $\av_Ag\,d\mu$ denotes the average of $g$ on $A$: $\int_Ag\,d\mu/\mu(A)$.
\subsection*{Acknowledgements}
\label{subsec:ackwn}
I wish to thank Urs Lang and Kevin Payne for discussions on quantitative
differentiation. This is work is based on numerical experiments on
computing asymptotical distributions of sums of Jones' $\beta$-numbers for
randomly sampled Lipschitz functions carried out in Python and R. I
acknowledge use of the packages \textbf{ggplot2}
(\cite{wickham-ggplot2}) and \textbf{seaborn} (built on top of
\cite{hunter-matplotlib}, \url{https://github.com/mwaskom/seaborn}). The author was supported by the ``ETH Zurich Postdoctoral Fellowship Program and the Marie Curie Actions
  for People COFUND Program''
\section{Background Material}
\label{sec:background}
\subsection{Functions and Spaces}
\label{subsec:funs-spaces}
\MyOutline{
\begin{enumerate}
\item The Lipschitz constants $\checkmark$
\item $l^2$-values harmonic functions $\checkmark$
\item Inverse limit systems $\checkmark$
\item Tangents and blow-ups $\checkmark$
\end{enumerate}
}
\begin{defn}[Lipschitz map]
  \label{defn:lip-map}
  Given a Lipschitz map $f:X\to Z$ between the metric spaces $X$ and
  $Z$, we let $\glip f.$ denote its global Lipschitz constant. To
  extract local information at $x$ on the Lipschitz constant we use
  the \textbf{big and small} Lipschitz constants:
  \begin{equation}
    \label{eq:lip-map-1}
    \begin{aligned}
      \biglip f(x) &= \limsup_{r\searrow0}\frac{1}{r}\sup_{y\in\ball
        x,r.}d_Z(f(x),f(y))\\
      \smllip f(x) &= \liminf_{r\searrow0}\frac{1}{r}\sup_{y\in\ball
        x,r.}d_Z(f(x),f(y)).
    \end{aligned}
  \end{equation}
\end{defn}
\begin{defn}[$l^2$-valued harmonic functions]
  \label{defn:harmonic-funs}
  Given $u:\Omega\subset\real^n\to l^2$ we say that it is
  \textbf{harmonic} if each component $u_j$ is harmonic. Given enough
  regularity on $\partial \Omega$ (e.g. $\partial\Omega$ is locally a
  Lipschitz graph), given a boundary condition $f:\partial\Omega\to
  l^2$, we can find a harmonic extension $u:\Omega\to l^2$: take a
  harmonic extension $u_j$ of each component $f_j$ of $f$, then
  observe that for each $n$:
  \begin{equation}
    \label{eq:harmonic-funs-1}
    \sum_{j=1}^nu_j^2
  \end{equation}
  is subharmonic and apply the maximum principle to conclude that if
  we set $u=(u_j)_{j=1}^\infty$ then $u$ is $l^2$-valued.
\end{defn}
\begin{defn}[Inverse Limit Systems]
  \label{defn:inv-sys}
  Let $(X_n)_{n=0}^\infty$ be a sequence of compact metric spaces with
  a uniform bound on their diameters:
  \begin{equation}
    \label{eq:inv-sys-1}
    \sup_n\diam X_n < \infty,
  \end{equation}
  and assume that there are surjective $1$-Lipschitz maps
  $\pi_{n+1,n}:X_{n+1}\to X_n$; then the inverse limit $X_\infty$ of
  $(X_n)_{n=0}^\infty$ consists of all the sequences
  $(x_n)_{n=0}^\infty$ satisfying $\pi_{n+1,n}(x_{n+1})=x_n$, and
  where the metric is defined by:
  \begin{equation}
    \label{eq:inv-sys-2}
    d_{X_\infty}(x_\infty, y_\infty) = \limsup_{n\nearrow\infty}d_{X_n}(x_n,y_n).
  \end{equation}
  Moreover, in this case we obtain a $1$-Lipschitz
  $\pi_{\infty,n}:X_\infty\to X_n$ just letting
  $(x_n)_{n=0}^\infty\mapsto x_n$.
  \par It is useful to get conditions under which $X_\infty$ is also
  the Gromov-Hausdorff limit of the $X_n$,
  compare~\cite{cheeger_inverse_l1,cheeger_inverse_poinc}. 
In this work it suffices to consider the
  following condition which just says that the $\pi_{\infty,n}$ give
  the desired Gromov-Hausdorff approximations:
  \begin{equation}
    \label{eq:GH-cond-inv-sys}
    \lim_{n\to\infty}\sup_{x\in X_n}\diam \projinv \infty,n.(x) = 0.
  \end{equation}
  Assume now that on each $X_n$ we have a Radon measure $\mu_n$ and
  that $\proj n+1,n\#.\mu_{n+1}=\mu_n$ and
  that~(\ref{eq:GH-cond-inv-sys}) holds. Then a standard compactness
  argument yields a Radon measure $\mu_\infty$ on $X_\infty$ such that
  $\proj \infty,n\#.\mu_\infty=\mu_n$ and $(X_n,\mu_n)$ converges to
  $(X_\infty,\mu_\infty)$ in the measured Gromov-Hausdorff sense. For
  a more general treatment of inverse limit systems of measure spaces
  we refer to~\cite{choksi-inverse-mealims}.
\end{defn}
\begin{defn}[Tangents/blow-ups]
  \label{defn:tangents}
  Let $X$ be a metric space and $p\in X$. \textbf{A tangent/blow-up}
  of $X$ at $p$ is a pointed metric space $(Y,q)$ which is the pointed
  Gromov-Hausdorff limit (you can see~\cite{dim_blow,burago-metbook}
  for a review of the basic properties of Gromov-Hausdorff
  convergence) of a sequence $(\frac{1}{r_n}X,p)$ where $r_n\searrow0$
  and $\frac{1}{r_n}X$ denotes the metric space $X$ with the rescaled
  metric:
  \begin{equation}
    \label{eq:tangents-1}
    d_{\frac{1}{r_n}X}(x,y)=\frac{1}{r_n}d_X(x,y).
  \end{equation}
  \par If $\mu$ is a Radon measure on $X$, a \textbf{measured}
  tangent/blow-up at $p$ is a pointed metric measure space $(Y,q,\nu)$
  such that $(\frac{1}{r_n}X,p,\mu/\mu(\uball p,r_n,X.))$ converges to
  $(Y,q,\nu)$ in the measured Gromov-Hausdorff sense.
\end{defn}
\subsection{Differentiability Spaces}
\label{subsec:diff-spaces}
\MyOutline{
\begin{enumerate}
\item Cheeger + version for Lip and gradient $\checkmark$
\item Conjecture $\checkmark$
\item RNP-differentiability space $\checkmark$
\item Summary of previous results: Cheeger, Keith, CK, BS, Bate,
  Schioppa, Bate-Li, EB $\checkmark$
\end{enumerate}
}
We start with a brief review of differentiability spaces.
For more details we refer to the original
papers~\cite{cheeger99,keith04} or to the nice expository
paper~\cite{kleiner_mackay}. 
This structure has several names in
the literature: \emph{(strong) measurable differentiabile structure,
differentiable structure (in the sense of Cheeger and Keith),
Lipschitz differentiability space, differentiability space}.
We highlight the features of differentiability spaces; contrary to
some earlier papers, we do not assume a uniform bound on the dimension of the
charts.
\begin{defn}
  \label{defn:diff-space}
Let $(X,\mu)$ be a metric measure space; we say that $X$ is a
\textbf{differentiability space} if:
\begin{description}
\item[(DiffChart)] There is a countable collection of charts
  $\{(U_\alpha,\phi_\alpha)\}_\alpha$, 
  where $U_\alpha\subset X$ is Borel and $\phi_\alpha:X\to\real^{N_\alpha}$ is Lipschitz,
  such that $X\setminus
  (\cup_\alpha U_\alpha)$ is $\mu$-null, and each real-valued
  Lipschitz function $f$ admits a first order Taylor expansion with
  respect to the components of $\phi_\alpha$
  at
  generic points of $U_\alpha$, i.e.~there are (a.e.~unique) measurable functions $\frac{\partial
      f}{\partial\phi_\alpha^i}$ on $U_\alpha$ such that:
  \begin{multline}
    \label{eq:taylor-exp-diff-space}
    f(x)=f(x_0)+\sum_{i=1}^{N_\alpha}\frac{\partial
      f}{\partial\phi_\alpha^i}(x_0)\left(\phi_\alpha^i(x)-\phi_\alpha^i(x_0)\right)
    +o\left(d(x,x_0)\right)\\ \quad(\text{for $\mu$-a.e.~$x_0\in U_\alpha$}).
  \end{multline}
  Equivalently:
  \begin{equation}
    \label{eq:lip-diff-diff-space}
    \biglip\biggl(
    f - \bigl\langle\frac{\partial
      f}{\partial\phi_\alpha^i}(x_0),\phi_\alpha^i\bigr\rangle
    \biggr)(x_0)=0.
  \end{equation}
\end{description}
The integer $N_\alpha$ is the \textbf{dimension} of the chart
  $\{(U_\alpha,\phi_\alpha)\}_\alpha$, and depends only on the set
  $U_\alpha$, not on the particular choice of the coordinate functions
  $\phi_\alpha$. If $\sup_\alpha N_\alpha<\infty$, it is called \textbf{the
  differentiability or the analytic dimension}. Note that $\{\frac{\partial
      f}{\partial\phi_\alpha^i}\}_{i=1}^{N_\alpha}$ are the components
    of the gradient $\nabla f$ with respect to the coordinate system
    $\{\phi_\alpha^i\}_{i=1}^{N_\alpha}$. 
\end{defn}
By~\cite{cheeger99}  to each differentiability space there are
associated measurable cotangent and tangent bundles
  $T^*X$ and $TX$; having locally trivialized $T^*X$ and $TX$, forms in $T^*X$
  correspond to differentials of Lipschitz functions, and vectors in
  $TX$ give rise to differential operators called derivations~\cite{weaver00,deralb}.
\par We now restate \textbf{(ConjPIRect)} in a more formal way.
\begin{description}
\item[(ConjPIRect)] Let $(X,\mu)$ be a differentiability space. Then
  there is a countable decomposition $X=\bigcup_iS_i\cup \Omega$ where
  $\mu(\Omega)=0$ and there are isometric embeddings $f_i:S_i\to Y_i$
  and measures $\nu_i$ on the spaces $Y_i$ such that $f_{i\#}\mu\on
  S_i=\nu_i\on f_i(S_i)$ and such that each $(Y_i,\nu_i)$ is a PI-space.
\end{description}
Following Bate and Li~\cite{bate_rnp_new_definition} we define
RNP-differentiability.
\begin{defn}[RNP-differentiability]
  \label{defn:rnp-diff}
  An \textbf{RNP-differentiability space} is a differentiability space
  where~(\ref{eq:taylor-exp-diff-space})
  and~(\ref{eq:lip-diff-diff-space}) hold also for any Lipschitz
  $f:X\to B$ where $B$ is a Banach space having the Radon-Nikodym property.
\end{defn}
\begin{thm}[Summary of results on differentiability spaces]
  \label{thm:diff-master-summary}
  This list summarizes relevant results on differentiability spaces:
  \begin{description}
  \item[(Cheeger)] \cite{cheeger99}; if $(X,\mu)$ is a PI-space
    then $(X,\mu)$ is a differentiability space whose analytic
    dimension is bounded by an expression that depends only on the
    doubling constant $C_\mu$ of $\mu$ and the constants that appear
    in the Poincar\'e inequality. Moreover, for each real-valued
    Lipschitz function $f$ one has $\biglip f = \smllip f$ $\mu$-a.e.
  \item[(Keith)] \cite{keith04}; assume that $(X,\mu)$ is a
    doubling metric measure space which
    satisfies the \textbf{Lip-lip inequality}: there
    is a constant $C\ge 1$ such that for each real-valued Lipschitz
    function $f$ one has $\biglip f\le C\smllip f$ $\mu$-a.e. Then
    $(X,\mu)$ is a differentiability space whose analytic dimension is
    bounded by an expression that depends only on $C_\mu$ and
    $C$. 
  \item[(Cheeger-Kleiner)] \cite{cheeger_kleiner_radon}; any PI-space is an
    RNP-differentiability space.
  \item[(Bate--Speight)] \cite{bate_speight}; if $(X,\mu)$ is a
    differentiability space then $\mu$ is asymptotically doubling in
    the sense that for $\mu$-a.e.~$x$ there are
    $(C_x,r_x)\in(0,\infty)^2$ such that:
    \begin{equation}
      \label{eq:diff_summary_s1}
        \mu\left(
          B(x,2r)
        \right) \le C_x
        \mu\left(
          B(x,r)
        \right)
        \quad(r\le r_x).
      \end{equation}
  \item[(Bate)] \cite{bate-diff}; any
     differentiability space $(X,\mu)$ can be decomposed, up to
     throwing away a null-set, into a
     countable union of positive measure subsets $S_i$ such that for
     each $(S_i,\mu\on S_i)$ the Lip-lip inequality holds with a
     constant $C_i=C(S_i)$.
  \item[(Schioppa)] \cite{deralb,dim_blow} A metric measure space $(X,\mu)$ is a differentiability space if and only
    if it satisfies the \textbf{Lip-lip equality}: given any
    real-valued Lipschitz function $f$ one has $\biglip f = \smllip f$
    $\mu$-a.e. Moreover, at $\mu$-a.e.~$p\in X$ all the measured
    tangents at $p$ are differentiability spaces.
  \item[(Cheeger-Kleiner-Schioppa)] \cite{cks_metric_diff}; for differentiability spaces a
    version of metric differentiation
    \cite{ambrosio_metric_bv,kirchheim_metric_diff,ambrosio-rectifiability}
 holds.
  \item[(Bate-Li)] \cite{bate_rnp_new_definition}; if $(X,\mu)$ is an
    RNP-differentiability space at $\mu$-a.e.~$p\in X$ the measured
    tangents of $X$ satisfy a \emph{non-homogeneous} Poincar\'e
    inequality.
  \item[(Ericksson-Bique)] \cite{sylvester-rnp};
    RNP-differentiability spaces are PI-rectifiable.
  \end{description}
\end{thm}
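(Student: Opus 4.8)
The statement is a compendium of results already in the literature, so the ``proof'' is not an argument but a verification: for each item I would cite the original source and check that the statement is faithful to the conventions fixed in Section~\ref{sec:background}. The plan is to go through the list item by item.

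Items \textbf{(Cheeger)}, \textbf{(Keith)}, \textbf{(Cheeger-Kleiner)}, \textbf{(Cheeger-Kleiner-Schioppa)}, \textbf{(Bate-Li)}, and \textbf{(Ericksson-Bique)} are quoted essentially verbatim from \cite{cheeger99}, \cite{keith04}, \cite{cheeger_kleiner_radon}, \cite{cks_metric_diff}, \cite{bate_rnp_new_definition}, and \cite{sylvester-rnp} respectively; here I would only remark that, where the cited work assumes an a priori uniform bound on the dimension of charts, the conclusion only becomes stronger once that hypothesis is dropped, so nothing is lost by stating these results without such a bound. For \textbf{(Bate--Speight)} I would record that asymptotic doubling in the sense of~(\ref{eq:diff_summary_s1}), with the pointwise constants $(C_x,r_x)$, is exactly the conclusion produced by the structure theorem of \cite{bate_speight}. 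For \textbf{(Bate)} the decomposition into countably many positive-measure pieces on which the Lip-lip inequality holds with a piece-dependent constant $C(S_i)$ is the content of \cite{bate-diff}; the one point to flag is that $C(S_i)$ genuinely varies with $i$, which is precisely why one cannot deduce a single global Lip-lip inequality from \textbf{(Keith)}. For \textbf{(Schioppa)} I would invoke the characterization of differentiability spaces via the Lip-lip equality together with the hereditary statement for measured tangents from \cite{deralb,dim_blow}.

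The only genuine work here is bookkeeping: reconciling Radon versus Borel regularity of $\mu$, pointed versus unpointed Gromov--Hausdorff convergence, and whether the term ``differentiability space'' is meant to carry an a priori dimension bound. Since all of these conventions have been pinned down in Section~\ref{sec:background}, I anticipate no real obstacle; this theorem is included for the reader's orientation rather than for any new mathematical content, and in the write-up I would simply attach the appropriate reference to each bullet with at most a one-sentence clarifying remark.
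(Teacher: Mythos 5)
Your reading is correct: this ``theorem'' is a literature summary with no proof given in the paper---each item is simply attributed to its source, exactly as you propose. Your item-by-item citation check, together with the remarks on convention-matching (chart dimension bounds, variable constants $C(S_i)$ in Bate's decomposition), is the appropriate treatment and coincides with what the paper implicitly does.
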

\section{The inverse limit system}
\label{sec:inv-lim}
In this section we focus on the metric measure properties of the
example. We first introduce the building blocks of the construction,
the non-quasiconvex diamonds, Construction~\ref{constr:nqc-diamond},
and then describe the inverse limit system,
Construction~\ref{constr:inv-lim-sys}. We then describe the shape of
balls, prove that the measures are doubling, Lemma~\ref{lem:ver-doubling}, and
construct the horizontal gradient, Definition~\ref{defn:hz-grad}. We also
introduce a way to discretize balls, the fundamental configuration of
Definition~\ref{defn:fund-epsi-r-conf} which is used in the proof of
differentiability. Then we construct horizontal paths with jumps,
Lemma~\ref{lem:good-hz-paths}, that connect a point at the center of a
ball with points in the fundamental configuration: these paths are a
key construction for proving differentiability. The proof of
differentiability will consist in controlling the variation of a Lipschitz
function in terms of two pieces: (1) the horizontal gradient on the
paths, and (2) a ``collapsing factor'' on the jumps. Finally, we prove the existence of
tangents which are not topologically connected, Theorem~\ref{thm:disc-tangs}.
\subsection{Building blocks}
\label{subsec:building-blocks}
\MyOutline{
\begin{enumerate}
\item Construction of non-quasiconvex diamond $\checkmark$
\item Construction of the inverse limit system $\checkmark$
\end{enumerate}
\begin{macrotest}
  $\dym ,[0,1]^3.$; $\dym 25, Q.$; $d\Eudec$, $\cell A.$, $\jumpp{\dym
    ,Q.}.$; $G\reddec$, $G\greendec$; $\gates B.$, $\proj ,.$, $\proj
  1,.$, $\proj ,2.$, $\proj 3,4.$.
\end{macrotest}
}
\begin{constr}[Non-quasiconvex diamonds]
  \label{constr:nqc-diamond}
  Let $[0,1]^3$ be the standard cube with the Euclidean metric
  $d\Eudec$ and $K=[1/2-1/6,1/2+1/6]^3$; fix an integer $n\ge 26$ and
  replace the subcube $K$ with two isometric copies $K_1, K_2$ glued
  along the boundary $\partial K$. Let $A = [0,1]^3\setminus K$ and
  note that both the spaces $A\cup K_1$ and $A\cup K_2$ are isometric
  to $[0,1]^3$. 
  \par As set, define $\dym ,[0,1]^3. = A\cup K_1\cup K_2$. Let $c_i$
  denote the center of $K_i$: as metric $d_{\dym ,[0,1]^3.}$ we take
  the largest metric which agrees with $d\Eudec$ on each $A\cup K_1$,
  $A\cup K_2$ and such that:
  \begin{equation}
    \label{eq:nqc-diamond-1}
    d_{\dym ,[0,1]^3.}(c_1,c_2)=\frac{1}{4n}.
  \end{equation}
  \par A more concrete description of $d_{\dym ,[0,1]^3.}$ can be
  obtained as follows. First define a symmetric function $\varrho:\dym
  ,[0,1]^3.\times \dym ,[0,1]^3.\to[0,\infty]$:
  \begin{equation}
    \label{eq:nqc-diamond-2}
    \varrho(p,q) =
    \begin{cases}
      d\Eudec(p,q) &\text{if $\{p,q\}\subset A\cup K_1$ or
        $\{p,q\}\subset A\cup K_2$}\\
      \frac{1}{4n} &\text{if $\{p,q\}=\{c_1,c_2\}$}\\
      +\infty &\text{otherwise.}
    \end{cases}
  \end{equation}
  Given $p,q\in\dym ,[0,1]^3.$ a \textbf{chain joining} $p$ to $q$ is a finite
  tuple $(p_0,\cdots p_N)$ with $p_0=p$, $p_N=q$; then:
  \begin{equation}
    \label{eq:nqc-diamond-3}
    d_{\dym ,[0,1]^3.}(p,q) = 
    \inf\left\{
      \sum_{i=0}^{N-1}\varrho(p_i, p_{i+1}): \text{$(p_0,\cdots p_N)$
        joins $p$ to $q$}
     \right\}.
   \end{equation}
   From~(\ref{eq:nqc-diamond-3}) it follows that the map $\pi:\dym
   ,[0,1]^3.\to[0,1]^3$ which collapses the two copies $K_1$, $K_2$
   together is $1$-Lipschitz.
   \par We now induce a cube-complex structure on $\dym ,[0,1]^3.$;
   choose $N=N(n)\in\natural$ such that if one subdivides $[0,1]$ into
   $2N+1$ intervals of the same length, this length lies in
   $[1/(128n),1/(32n)]$, and such that $2N+1$ is divisible by $3$. 
   Taking products of these intervals we obtain
   a cube-complex structure on $[0,1]^3$ where they all have the same
   side length. Moreover, one such cube, call it $\tilde K_N$, is
   centered at $(1/2,1/2,1/2)$. As $2N+1$ is divisible by $3$, this
   cube-complex structure is compatible with the doubling operation we
   applied to $K$, i.e.~ the boundary of $K$ will consist of $2$-cells
   and we can induce a cube-complex structure on
   $\dym,[0,1]^3.$ by adding the requirement that $\pi$ is open and
   cellular.
   \par Note that $\pi^{-1}(\tilde K_N) = \tilde K_{N,1} \cup\tilde
   K_{N, 2}$, the two cubes being centered at $c_1$, $c_2$
   respectively. We will call these cubes the \textbf{gates}, and let
   $\gates[0,1]^3. = \{ \tilde K_{N, 1}, \tilde K_{N, 2}\}$.
   \par Now to each copy of $K$ we attach a color, say $K_1$ is green
   and $K_2$ is red. We will then have a green gate
   $\gtsheet\greendec([0,1]^3)$ and a red gate
   $\gtsheet\reddec([0,1]^3)$. Similarly, we have two copies
   $\{\chrsheet_1,\chrsheet_2\}$ of
   $[0,1]^3$ inside $\dym ,[0,1]^3.$ depending on wether we choose
   green or red for the cover of $K$: we will call these the
   \textbf{chromatic sheets}, and let
   $\chrom[0,1]^3.=\{\chrsheet_1,\chrsheet_2\}$. Finally let the
   \textbf{cover sheets} $\ccover [0,1]^3.=\{\covsheet_1$,
   $\covsheet_2\}$ 
   be the same as $\{\chrsheet_1,\chrsheet_2\}$ (chromatic and
   cover sheets will differ at the next iterations of the
   construction).
   \par Finally, we let $\jumpp [0,1]^3.=\{(c_1, c_2)\}$ which we call
   the \textbf{jump pair} of $\dym ,[0,1]^3.$. 
   \par The construction described so far will be called the
   \textbf{nqc-diamond} on $[0,1]^3$ with parameter $n$. We can extend
   this construction to each cube $T$, obtaining $\dym ,T.$: just take
   a similarity and a translation that identify $T$ with $[0,1]^3$,
   perform the above construction, and then scale back the metric so
   that $\diam(\dym ,T.)=\diam(T)$.
   \par Given a measure $\mu_T$ on $T$ which is a sum of multiples of the
   Lebesgue measure on the 3-dimensional cells $\cell T.$ of $T$,
   there is a naturally induced measure $\mu_{\dym ,T.}$ 
on $\dym ,T.$
   such that $\pi_{\#}\mu_{\dym ,T.}=\mu_T$, which is obtained by
   splitting in $1/2$ the measure across pairs of cells of $\cell{ \dym
   ,T.}.$ that $\pi$ maps to the same cell of $T$.
\end{constr}
\begin{constr}[Construction of the inverse limit system]
  \label{constr:inv-lim-sys}
  Let $n_0\ge 100$ to be determined later, set $X_0=[0,1]^3$ with the
  standard Euclidean distance and Lebesgue measure $\mu_{X_0}=\lebmeas
  3.\on[0,1]^3$. For each integer $k$ let $n_k=n_0+k$ and let $\bar
  n_0=0$, $\bar n_1=n_1^3$, $\bar n_k=\sum_{j\le k}n_j^3$.
  \par\noindent\texttt{Step 1: The construction of $X_1, X_2, \cdots,
    X_{n_1^3}$.}
  \par We simply let $X_1=\dym n_1,X_0.$ and let $\proj 1,0.:X_1 \to X_0$ be
  the map $\pi$ as in Construction~\ref{constr:nqc-diamond} and
  $\mu_{X_1}$ the corresponding measure. Let $\slen(X_1)$ denote the
  common length of the sides of the elements of $\cell X_1.$. Set
  $\todoub X_0. = [0,1]^3$.
  \par To obtain $X_2$, let $\todoub X_1.= \cell X_1.\setminus\gates
  X_1.$ and for each $Q\in\todoub X_1.$,
  replace it with $\dym n_1,Q.$; on the other hand, subdivide each
  $Q\in\gates X_1.$ into smaller subcubes so that all the cells of
  $\cell X_2.$ have the same side-length $\slen(X_2)$; the set of the
  cells of $X_1$ which were only subdivided will be denoted by
  $\subdiv X_1.$.
  Combining the
  maps $\pi_Q:\dym n_1,Q.\to Q$ for $Q\in\todoub X_1.$ and the
  identity for $Q\in\subdiv X_1.$, we get a map $\proj
  2,1.:X_2\to X_1$; as in Construction~\ref{constr:nqc-diamond} we
  also obtain a measure $\mu_{X_2}$ with $\proj
  2,1\#.\mu_{X_2}=\mu_{X_1}$. Note that $\mu_{X_2}$ is a multiple
  of Lebesgue measure on each element of $\cell X_2.$.
  \par We now turn to a description of the metric $d_{X_2}$ of $X_2$
  by introducing the \textbf{chromatic sheets} $\chrom X_2.$ and the
  \textbf{cover sheets} $\ccover X_2.$. Take $X_1$: we can lift it
  to $X_2$ by choosing for each $Q\in\todoub X_1.$
  either the green or the red lift in the construction of $\dym
  n_1,Q.$; the set of all possible lifts of $X_1$ is $\ccover X_2.$
  and we have:
  \begin{equation}
    \label{eq:constr-inv-lim-sys-1}
    \#\ccover X_2. = 2^{\#\{Q\in\todoub X_1.\}}.
  \end{equation}
  For the moment note that we want $d_{X_2}$ so that for each
  $\covsheet\in\ccover X_2.$ one has that $\proj 2,1.:\covsheet\to X_1$ is
  an isometry. We now want to lift the chromatic sheets $\chrom X_1.$:
  this time for any lift $\tilde\chrsheet$ of $\chrsheet$ we are
  always choosing, across all $Q\in\todoub X_1.$ the
  \emph{same} color, either green or red. In particular, the set of
  all chromatic sheets $\chrom X_2.$ consists of $4$ elements, which
  we can label (green, green), (red, green), (green, red) and
  (red, red). Also, the set of lifts of $\chrsheet$ in $X_2$, $\chrom
  X_2,{\proj 2,1.^{-1}(\chrsheet)}.$ has cardinality $2$. For the
  moment note that we want $d_{X_2}$ so that for each $\chrsheet
  \in\chrom X_2.$ $\proj 2,0.:\chrsheet\to [0,1]^3$ is an
  isometry. Then we can restrict Lebesgue measure on each $\chrsheet$
  and get the representation:
  \begin{equation}
    \label{eq:constr-inv-lim-sys-2}
    \mu_{X_2} = \frac{1}{4}\sum_{\chrsheet\in\chrom X_2.}\lebmeas 3.\on\chrsheet.
  \end{equation}
  The set of \textbf{jump pairs} of $X_2$ is:
  \begin{equation}
    \label{eq:constr-inv-lim-sys-3}
    \jumpp X_2. = \bigcup_{Q\in\todoub X_1.}
      \jumpp Q.;
    \end{equation}
    moreover, note that for $(p,q)\in\jumpp X_1.$ we did not
    double the cells containing $p$ and $q$ and so we can regard
    $\{p,q\}$ as a subset of $X_2$ too. Define a symmetric
    function $\varrho:X_2\times X_2\to[0,\infty]$ by:
    \begin{equation}
      \label{eq:constr-inv-lim-sys-4}
      \varrho(p,q)=
      \begin{cases}
        d_{\covsheet\simeq X_1}(p,q)&\text{if
          $p,q\in\covsheet\in\ccover X_2.$}\\
        \frac{1}{4n_1}\slen(Q)&\text{if $\{(p,q)\}$ or $\{(q,p)\}=\jumpp Q.$ for
          $Q\in\todoub X_1.$}\\
    +\infty&\text{otherwise.}
      \end{cases}
    \end{equation}
    Then $d_{X_2}(p,q)$ is obtained by minimizing the cost of chains
    joining $p$ to $q$:
    \begin{equation}
      \label{eq:constr-inv-lim-sys-5}
      d_{X_2}(p,q) = 
    \inf\left\{
      \sum_{i=0}^{N-1}\varrho(p_i, p_{i+1}): \text{$(p_0,\cdots p_N)$
        joins $p$ to $q$}
     \right\}.
   \end{equation}
   Then $\proj 2,1.$ becomes $1$-Lipschitz, open and cellular, and
   satisfies the desiderata above. Finally note that $\gates X_1.$ can
   be identified with a subset of $X_2$ (as we did not apply the
   diamond construction on those cells of $X_1$ but just subdivided them)
   and we let:
   \begin{equation}
     \label{eq:constr-inv-lim-sys-6}
     \gates X_2. = \bigcup_{Q\in\todoub X_1.}\gates
    Q. \cup\bigcup_{\substack{Q\in\cell X_2.\\Q\subset\gates X_1.}}Q.
  \end{equation}
  \par Let $2\le k <\bar n_1$; to obtain $X_{k+1}$, we define $\todoub
  X_{k}.=\cell X_{k}.\setminus\gates X_k.$ and replace each
  $Q\in\todoub X_k.$ whith $\dym n_1,Q.$. The other definitions,
  e.g.~$d_{X_{k+1}}$, $\proj k+1,k.$ are as above.
  \par\noindent\texttt{Step 2: The construction of $X_{\bar n_k +1},
    X_{\bar n_k + 2}, \cdots,
    X_{\bar n_{k+1}}$.}
  \par To obtain $X_{\bar n_k +1}$ from $X_{\bar n_k}$, we apply the
  diamond construction to \emph{all} the cells of $X_{\bar n_k}$: we
  let $\todoub X_{\bar n_k}. = \cell X_{\bar n_k}.$ and replace each
  $Q\in\todoub X_{\bar n_k}.$ with $\dym n_{k+1},Q.$. We can lift $X_{\bar
    n_k}$ by choosing for each
  $Q\in\todoub
  X_{\bar n_k}.$ either the
  green or red lift in the construction of $\dym n_k,Q.$. We set of
  all possible lifts of $X_{\bar n_k}$ will be denoted by $\ccover
  X_{\bar n_k+1}.$ and we have:
  \begin{equation}
    \label{eq:constr-inv-lim-sys-7}
    \#\ccover X_{\bar n_k+1}. = 2^{\#\{Q\in\todoub X_{\bar n_k}.\}}.
  \end{equation}
  On the other hand, consider a chromatic sheet $\chrsheet\in\chrom
  X_{\bar n_k}.$; this admits exactly two lifts $\chrom X_{\bar
    n_k+1},{\projinv \bar n_k+1,\bar
    n_k.}(\chrsheet).=\{\chrsheet\greendec,
  \chrsheet\reddec\}$ where,
  whenever we choose a lift $Q\in\todoub X_{\bar n_k}.$, we \emph{always}
  choose either red or green. We also define the set of all chromatic
  sheets:
  \begin{equation}
    \label{eq:constr-inv-lim-sys-8}
    \chrom X_{\bar n_k+1}. = \bigcup_{\chrsheet\in\chrom X_{\bar n_k}.}\chrom X_{\bar
    n_k+1},{\projinv \bar n_k+1,\bar
    n_k.}(\chrsheet)..
\end{equation}
\par To construct $d_{X_{\bar n_k+1}}$ we proceed as before; we define a
symmetric function $\varrho: X_{\bar n_k+1}\times X_{\bar
  n_k+1}\to[0,\infty]$ by:
  \begin{equation}
    \label{eq:constr-inv-lim-sys-9}
    \varrho(p,q) =
    \begin{cases}
      d_{\covsheet\simeq X_{\bar n_k}}(p,q) &\text{if
        $p,q\in\covsheet\in\ccover X_{\bar n_k+1}.$}\\
      \frac{1}{4n_{k+1}}\slen(Q) &\text{if
        $\{(p,q)\}$ or $\{(q,p)\}=\jumpp Q.$ for $Q\in\todoub X_{\bar n_k}.$}\\
      +\infty&\text{otherwise.}
    \end{cases}
  \end{equation}
  Then  $d_{X_{\bar n_k+1}}(p,q)$ is obtained by minimizing the cost of chains
    joining $p$ to $q$:
    \begin{equation}
      \label{eq:constr-inv-lim-sys-10}
      d_{{X_{\bar n_k+1}}}(p,q) = 
    \inf\left\{
      \sum_{i=0}^{N-1}\varrho(p_i, p_{i+1}): \text{$(p_0,\cdots p_N)$
        joins $p$ to $q$}
     \right\}.
   \end{equation}
   Thus $\proj \bar n_k+1,\bar n_k.$ becomes open, cellular and
   $1$-Lipschitz. Moreover for each $\covsheet\in\ccover X_{\bar
     n_k+1}.$ the map $\proj \bar n_k+1,\bar n_k.:\covsheet\to X_{\bar
     n_k}$ is an isometry. Moreover, letting $\proj \alpha,\beta. =
   \proj
   \alpha,\alpha-1.\circ\proj\alpha-1,\alpha-2.\circ\cdots\circ\proj\beta+1,\beta.$,
   we have that for each $\chrsheet\in\chrom X_{\bar n_k+1}.$ the map
   $\proj\bar n_k+1,\bar n_k.:\chrsheet\to[0,1]^3$ is an isometry.
   Note that:
   \begin{equation}
     \label{eq:constr-inv-lim-sys-11}
     \#\chrom X_{\bar n_k+1}. = 2^{\bar n_k+1},
   \end{equation}
   so that one has the representation:
   \begin{equation}
     \label{eq:constr-inv-lim-sys-12}
     \mu_{X_{\bar n_k+1}}=2^{-\bar n_k-1}\sum_{\chrsheet\in\chrom
       X_{\bar n_k+1}.}\lebmeas 3.\on\chrsheet.
   \end{equation}
   We also define the set of jump pairs:
   \begin{equation}
     \label{eq:constr-inv-lim-sys-13}
     \jumpp X_{\bar n_k+1}. = \bigcup_{Q\in\todoub X_{\bar
         n_k}.}\jumpp Q..
   \end{equation}
   \par An important difference is that for $l\le \bar n_k$, if
   $\{(p,q)\}\in \jumpp X_l.$ both $p$ and $q$ are going to be
   centers of some $Q\in\todoub X_{\bar n_k}.$. Thus $p$ gets replaced
   by a pair $\{p\greendec, p\reddec\}$ and $q$ gets replaced by 
   $\{q\greendec, q\reddec\}$. Moreover, for all choices
   $\alpha,\beta\in\{\text{green}, \text{red}\}$ we have:
   \begin{equation}
     \label{eq:constr-inv-lim-sys-14}
     d_{X_{\bar n_k+1}}(p_\alpha, q_\beta) = d_{X_{\bar n_k}}(p,q).
   \end{equation}
   Finally let
   \begin{equation}
     \label{eq:constr-inv-lim-sys-15}
     \gates X_{\bar n_k+1}. = \bigcup_{Q\in\todoub X_{\bar
         n_k}.}\gates Q..
   \end{equation}
   For $\bar n_k+1\le l < \bar n_{k+1}$ we explain how to construct
   $X_{l+1}$ from $X_l$. Let $\todoub X_l.=\cell X_l.\setminus\gates
   X_l.$ and for each $Q\in\todoub X_l.$ replace it with $\dym
   n_k,Q.$; on the other hand, subdivide each $Q\in\gates X_l.$ into
   smaller subcubes so that all the cells of $\cell X_{l+1}.$ have the
   same side length $\slen(X_{l+1})$; the set of cells which were only
   subdivided will be denoted by $\subdiv X_l.$. We then construct
   $d_{X_{l+1}}$, $\proj l+1,l.$, etc\dots as we did for $X_2$ (but we
   replace $n_1$ with $n_k$). Finally let:
   \begin{equation}
     \label{eq:constr-inv-lim-sys-16}
     \gates X_{l+1}. = \bigcup_{Q\in\todoub X_l.}\gates
     Q. \cup\bigcup_{
       \substack{Q\in\cell X_{l+1}.\\
         Q\subset\gates X_l.}}Q.
   \end{equation}
   \par Let $\{X_l\}_l$, $\{\mu_l\}_l$ and $\{\proj l+1,l.\}_l$ denote the resulting
     inverse system; let $X_\infty$, denote the inverse limit;
     then~(\ref{eq:GH-cond-inv-sys}) holds (compare the discussion in
     Lemma~\ref{lem:shape-balls}) and thus $X_l\to X_\infty$ in the
     Gromov-Hausdorff sense and we also obtain a limit measure
     $\mu_\infty$ such that $\proj\infty,l\#.\mu_\infty=\mu_l$.
     Here we summarize three properties of
     the inverse system:
     \begin{description}
     \item[(IsoCov)] For each $\covsheet\in\ccover X_{l+1}.$ the map
       $\proj l+1,l.:\covsheet\to X_l$ is an isometry.
     \item[(IsoChrom)] For each $\chrsheet\in\chrom X_{l+1}.$ the map
       $\proj l+1,l.:\chrsheet\to[0,1]^3$ is an isometry.
     \item[(MuChrom)] One can represent $\mu_{X_{l+1}}$ as:
       \begin{equation}
         \label{eq:constr-inv-lim-sys-17}
         \mu_{X_{l+1}}=2^{-l-1}\sum_{\chrsheet\in\chrom
           X_{l+1}.}\lebmeas 3.\on\chrsheet.
       \end{equation}
     \end{description}
   \end{constr}
\begin{rem}[Chromatic Labels]
  \label{rem:chromatic-labels}
  Note that we can assign to chromatic sheets in
  $X_l$ a color label of length $l$ consisting of entries which are
  either green or red and that in passing to $X_{s+1}$ each sheet gets
  doubled: we can either append red or green to the label. We can then
  induce a chromatic label also on points; some points belong to
  just one chromatic sheet and so the label is unambiguous; for points
  belonging to more than one sheet we make all possible labelings valid.
\end{rem}
\subsection{Bounded local geometry and the horizontal gradient}
\MyOutline{
\label{subsec:local-geo}
\begin{enumerate}
\item A discussion on balls $\checkmark$
\item The fundamental $(\varepsilon r)$-discrete configuration $\checkmark$
\item Verification of doubling $\checkmark$
\item The module of derivations $\checkmark$
\end{enumerate}
}
\begin{defn}[Discrete logarithm]
  \label{defn:discrete-log}
  For $r\in(0,1/2]$ let $\lg(r)$ be the integer such that:
  \begin{equation}
    \label{eq:discrete-log-1}
    \slen(X_{\lg(r)+1}) \le r < \slen(X_{\lg(r)}).
  \end{equation}
  For $\varepsilon\in(0,1]$, as
  $\slen(X_{l+1})\le\frac{1}{2}\slen(X_l)$, one has the estimate:
  \begin{equation}
    \label{eq:discrete-log-2}
    \lg(\varepsilon r) - \lg(r) \le \log_2\frac{1}{\varepsilon} + 5.
  \end{equation}
\end{defn}
\begin{lem}[Shape of balls]
  \label{lem:shape-balls}
  Let $j>l$ ($j=\infty$ being admissible), $p_j\in X_j$ and $p_l =
  \proj j,l.(p_j)$. Then:
  \begin{equation}
    \label{eq:shape-balls-s1}
    \uball p_j,r,X_j. \subset \projinv j,l.(\uball p_l,r,X_l.) \subset
    \uball p_j, r + 4\times\slen(X_{l}),X_j..
  \end{equation}
  There is a universal constant $C$ such that for each $k\in\natural$,
  $p_k\in X_k$ and $r\in[0,\sqrt{3}]$ one can control the number of
  chromatic sheets intersected by $\uball p_k,r,X_k.$ as follows:
  \begin{equation}
    \label{eq:shape-balls-s2}
    \#\{\chrsheet\in\chrom X_k.: \chrsheet\cap\uball
    p_k,r,X_k.\ne\emptyset\} \le C 2^{k-\lg(r)}.
  \end{equation}
\end{lem}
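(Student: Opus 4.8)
The plan is to prove the two assertions separately, since the first is essentially a tautology from the definitions while the second requires a counting argument.

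For the first inclusion chain~(\ref{eq:shape-balls-s1}), the left inclusion $\uball p_j,r,X_j. \subset \projinv j,l.(\uball p_l,r,X_l.)$ is immediate from the fact that $\proj j,l.$ is $1$-Lipschitz: if $d_{X_j}(p_j,x_j)<r$ then $d_{X_l}(p_l,\proj j,l.(x_j))<r$. For the right inclusion, I would argue through the cover sheets. Given $x_j$ with $\proj j,l.(x_j)\in\uball p_l,r,X_l.$, pick an almost-optimal chain in $X_l$ realizing $d_{X_l}(p_l,\proj j,l.(x_j))$; this chain alternates between traveling inside cover sheets (where $\proj j,l.$ restricts to an isometry by (IsoCov)) and jumping across jump pairs, where by~(\ref{eq:constr-inv-lim-sys-14}) the distance is preserved when passing between levels. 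The only loss comes from lifting the initial and final points of the chain from $X_l$ to $X_j$: each of $p_l$ and $\proj j,l.(x_j)$ might not coincide with $\proj j,l.(p_j)$'s chosen lift at the endpoint cell, but the fiber $\projinv j,l.$ of any point has diameter at most a constant times $\slen(X_l)$ — here one uses that the gates have side length comparable to $\slen(X_l)/n_k$ and that jumping within a doubled cell costs $\frac{1}{4n}\slen(Q)$. Summing these endpoint corrections gives the additive error $4\times\slen(X_l)$; being a bit careful with the constant, this is where the condition $n\ge 26$ and the choice of $N(n)$ in Construction~\ref{constr:nqc-diamond} feed in. This also yields~(\ref{eq:GH-cond-inv-sys}) as promised, since $\slen(X_l)\to 0$.

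For the counting bound~(\ref{eq:shape-balls-s2}), the strategy is induction on $k$, tracking how chromatic sheets split and how the ball can meet them. The base case $k\le\lg(r)$ is trivial since $\#\chrom X_k.=2^k\le C2^{k-\lg(r)}$ for a suitable $C$. For the inductive step from $k$ to $k+1$: by (IsoChrom) each $\chrsheet\in\chrom X_k.$ carries the Euclidean metric of $[0,1]^3$, and passing to $X_{k+1}$ either subdivides $\chrsheet$ (if we are in a gate region, no doubling) or replaces its cells by diamonds, in which case $\chrsheet$ lifts to exactly two sheets $\chrsheet\greendec,\chrsheet\reddec$. A sheet $\chrsheet'\in\chrom X_{k+1}.$ can meet $\uball p_{k+1},r,X_{k+1}.$ only if its projection $\proj k+1,k.(\chrsheet')\in\chrom X_k.$ meets $\projinv k+1,k.(\uball p_{k+1},r,X_{k+1}.) \supset$ — no, rather, $\proj k+1,k.(\uball p_{k+1},r,X_{k+1}.) \subset \uball p_k,r,X_k.$ by $1$-Lipschitzness, so the projected sheet meets $\uball p_k,r,X_k.$. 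Hence the number of relevant sheets at most doubles when we go from $k$ to $k+1$, \emph{except} we must also check that the doubling genuinely happens at most once per level and that new sheets created deeper down (from cells that get doubled for the first time at level $k+1$) don't escape the count — but those new sheets project into old ones, so they are already accounted for. Running the doubling bound from level $\lg(r)$ up to level $k$ gives a factor $2^{k-\lg(r)}$, and the constant $C$ absorbs the behavior at level $\lg(r)$ itself, where a ball of radius $r\in[\slen(X_{\lg(r)+1}),\slen(X_{\lg(r)}))$ meets only boundedly many cells of $X_{\lg(r)}$, hence boundedly many chromatic sheets \emph{relative to} the total at that level.

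The main obstacle I expect is getting the bookkeeping in~(\ref{eq:shape-balls-s2}) genuinely tight: one must ensure the "at most doubles per level" claim is not spoiled by the fact that different chromatic sheets in $X_k$ can share points (Remark~\ref{rem:chromatic-labels}), so that a single lift at level $k+1$ might be reachable from a ball through several level-$k$ sheets, and conversely a ball meeting a shared point meets many sheets at once at the bottom level $\lg(r)$. The cleanest way around this is to first reduce~(\ref{eq:shape-balls-s2}) to a statement about cells: bound $\#\{Q\in\cell X_k. : Q\cap \uball p_k,r,X_k.\ne\emptyset\}$ by $C$ times the number of cells of $X_{\lg(r)}$ meeting a ball of radius $r$ — which is $O(1)$ by the Euclidean geometry on each cover sheet together with~(\ref{eq:shape-balls-s1}) — and then observe that each such cell lies in at most $2^{k}/2^{\text{(number of doublings in its history)}}$... hmm, more simply: each cell of $X_k$ belongs to exactly $2^{k - (\text{number of levels at which that cell's ancestor was in a gate/subdiv})}$ chromatic sheets, but this is uniformly $\le 2^{k-\lg(r)}$ once we note a cell meeting the ball had side length $\le r < \slen(X_{\lg(r)})$ hence was created at or after level $\lg(r)$, so its chromatic labels of length $k$ only vary freely in the last $k-\lg(r)$ positions. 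Combining the $O(1)$ cell count with this per-cell sheet bound yields~(\ref{eq:shape-balls-s2}).
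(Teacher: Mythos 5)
For the inclusion chain~(\ref{eq:shape-balls-s1}) you have essentially the right idea, which is also the paper's: the left inclusion is $1$-Lipschitzness, and the right inclusion reduces to bounding the diameter of the fibers $\projinv j,l.(x_l)$ by $O(\slen(X_l))$. However, the quantities you invoke are not the ones doing the work. The fiber-diameter bound does not come from the gate side length $\slen(X_l)/n_k$ nor from the jump cost $\tfrac{1}{4n}\slen(Q)$; it comes from noting that changing the color label at a single position $s>l$ costs $\lesssim\slen(X_{s-1})$ (travel to a gate of $X_s$) and then summing the geometric series $\sum_{s>l}\slen(X_s)\le 2\slen(X_l)$. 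Also note that $p_j$ is already the specified lift of $p_l$, so only the terminal point of your lifted chain needs correcting, not both endpoints.

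For~(\ref{eq:shape-balls-s2}) your route diverges from the paper's and has a genuine flaw. The paper argues per point: for $q$ in the ball it asks at which positions the label of a chromatic sheet through $q$ is forced to agree with one through $p_k$ (roughly those $s$ with $\slen(X_s)\gtrsim r$, plus at most one exceptional position near $\partial K$), giving $\lesssim k-\lg(r)$ free positions. Your cell-counting reduction does not work as written. The number of chromatic sheets of $X_k$ containing a fixed cell $Q$ is $2^m$, where $m$ is the number of positions $s\le k$ at which $Q$'s ancestor was \emph{not} actually doubled in passing from $X_{s-1}$ to $X_s$ — that is, it sat in the undoubled region $A$ of an nqc-diamond, or was a cell of $\gates X_{s-1}.$ that was merely subdivided. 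This is the complement of the exponent you wrote, and more importantly $m$ is \emph{not} controlled by $k-\lg(r)$: by~(\ref{eq:constr-inv-lim-sys-6}) and~(\ref{eq:constr-inv-lim-sys-16}), a cell descending from a gate of $X_1$ stays in $\gates X_s.$ and is only subdivided for all $s=2,\dots,k$ with $k\le\bar n_1$, so such a $Q$ has $m=k-1$ and lies in $2^{k-1}$ chromatic sheets regardless of $r$. Taking $p_k$ to be the center of such a $Q$ and $r<\slen(X_k)/2$, every one of those $2^{k-1}$ sheets meets $\uball p_k,r,X_k.$, whereas the claimed bound is $C2^{k-\lg(r)}=O(1)$. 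This appears to conflict not just with your argument but with the statement~(\ref{eq:shape-balls-s2}) and the downstream estimate~(\ref{eq:ver-doubling-p1}) at such points, so it is worth revisiting how the paper's per-point label analysis handles gate cells that accumulate wildcard label positions over an entire epoch; in any case the cell-count shortcut is not available, and any correct proof has to engage with this phenomenon directly.
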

\begin{proof}
  As $\proj j,l.$ is $1$-Lipschitz and open, $\proj j,l.: \uball
  p_j,r,X_j.\to \uball p_l,r,X_l.$ is surjective, which gives the
  inclusion:
  \begin{equation}
    \label{eq:shape-balls-p3}
    \uball p_j,r,X_j. \subset\projinv j,l.(\uball p_l,r,X_l.).
  \end{equation}
  Let $q\in\projinv j,l.(\uball p_l,r,X_l.)\setminus \uball
  p_j,r,X_j.$; then we can reach $q$ from $\uball p_j,r,X_j.$ by
  changing chromatic sheets by looking at the colors added between
  $X_{l+1}$ and $X_j$ (for $j=\infty$ one should use a limiting
  argument). In fact, note that we can assign to chromatic sheets in
  $X_s$ a color label of length $s$ consisting of entries which are
  either green or red and that in passing to $X_{s+1}$ each sheet gets
  doubled: we can either append red or green to the label. Suppose now
  that in $X_s$ we are on a chromatic sheet $\chrsheet$ and we want to
  move to the chromatic sheet that differs only on the last entry of
  the color label. We just need to move to a gate $\gates X_{l+1}.$
  and this can be accomplished by traveling a distance at most
  $\slen(X_{l})$. As $\slen(X_{s+1})\le\frac{1}{2}\slen(X_s)$ we can
  control, via a geometric series, also the total distance to travel to change colors added between
  $X_s$ and $X_{s+1}$ so we get the inclusion:
  \begin{equation}
    \label{eq:shape-balls-p4}
    \projinv j,l.(\uball p_l,r,X_l.)\subset\uball p_j, r + 4\slen(X_l),X_j..
  \end{equation}
  We now pass to the bound on the number of chromatic sheets. Let
  $q\in X_k\cap\uball p_k,r,X_k.$ and for $s\le k$ let $\chrsheet_s(q)$ be a chromatic
  sheet of $X_s$ containing $\proj k,s.(q)$. If for some $s\le k$
  there is no choice of $\chrsheet_s(q)$ such that $\chrsheet_s(q)$
  passes through $\proj k,s.(p_k)$, choose $s=s(q)$ as small as
  possible having this property. Then either $d(q,p_k)\ge \slen(X_{s(q)})$
  or $\proj k,s(q).(q)$ lies at distance $<\slen(X_{s(q)})$ from
 $\partial K$, where $K$ is the central cube 
 that gets doubled in passing from $Q$ to $\dym n=n(s),Q.$ for
 $Q\in\todoub X_{s-1}.$. Consider the set of those $s(q)$ such that
 the second case happens and $d(q,p_k)<\slen(X_{s(q)})/16$:
  this can only happen for one value $\tilde s(q)$ of $s(q)$ because
  $\partial K$ is a a distance $>1/8\slen(X_{s-1})$ from $\partial Q$.
  Now for each $l$ such that $r\ge\slen(X_l)/16$ we can change at most two colors
  in the chromatic sheet and hence the bound~(\ref{eq:shape-balls-s2}) follows.
\end{proof}
\MyOutline{
\begin{macrotest}
  $\disc,.$ $\disc a,.$ $\disc a,b.$ $\lg(r)$ $\grid 5.$ $\fund p,,.$
  $\fund p,1,.$ $\fund p,,2.$ $\fund p,1,2.$ $\slen(A)$ $\fundsub p, ,a,a.$
\end{macrotest}
}
\begin{defn}[The fundamental $(\varepsilon, r)$-configuration]
  \label{defn:fund-epsi-r-conf}
  Fix $\varepsilon\in(0,1/400)$ and let
  $\disc,.=\{j\varepsilon^2r\}_{j=1}^{\lceil
    1/\varepsilon^2\rceil}$. Pick $p_\infty\in X_\infty$ and
  $r\in(0,1/2)$. Let $p_k=\proj \infty,k.(p_\infty)$ and let:
  \begin{equation}
    \label{eq:fund-epsi-r-conf-1}
    \grid p_0.=\{p_0 + ((-1)^{h_1}\varrho_1, (-1)^{h_2}\varrho_2, (-1)^{h_3}\varrho_3)\}_{
      \substack{\varrho_i\in\disc,.\\
        h_i\in\{0,1\}}}.
  \end{equation}
  Then $\grid p_0.\subset\uball p_0,\sqrt{3}(1+\varepsilon)r,X_0.$ and
  $\grid p_0.$ is $\varepsilon^2r$-dense in $\uball p_0,r,X_0.$. Let
  $j_0 = \lg(\varepsilon^2r)$ and denote by $\chrom j_0.$ the set of
  chromatic sheets which intersect $\uball p_{j_0},r,X_{j_0}.$. For
  each $\chrsheet\in\chrom j_0.$ let
  $\grid\chrsheet.=\chrsheet\cap(\proj j_0,0.|\chrsheet)^{-1}(\grid
  p_0.)$. Then set:
  \begin{equation}
    \label{eq:fund-epsi-r-conf-2}
    \grid p_{j_0},r.=\bigcup_{\chrsheet\in\chrom j_0.}\grid\chrsheet.,
  \end{equation}
  which is $\varepsilon^2r$-dense in $\uball p_{j_0},r,X_{j_0}.$ and
  lies in $\uball p_{j_0},2\sqrt{3}r,X_{j_0}.$.
  \par For $j>j_0$ define $\chrom j_0.$ as follows; for each
  $\chrsheet\in\chrom j-1.$ choose just the green lift, i.e.~take
  $\tilde \chrsheet\in\chrom j.$ such that $\proj
  j,j-1.(\tilde\chrsheet)=\chrsheet$ and the label of
  $\tilde\chrsheet$ is obtained by appending green to that of
  $\chrsheet$. As $\proj j,0.:\tilde\chrsheet\to[0,1]^3$ is an
  isometry, let $\grid\tilde\chrsheet.=\tilde\chrsheet\cap(\proj
  j,0.|\tilde\chrsheet)^{-1}(\grid p_0.)$. Finally let:
  \begin{equation}
    \label{eq:fund-epsi-r-conf-3}
    \grid p_j,r. = \bigcup_{\chrsheet\in\chrom j.}\grid\chrsheet.,
  \end{equation}
  which lies in $\uball p_j,2\sqrt{3}r,X_j.$. 
\par We now show that $\grid
  p_j,r.$ is $(5\varepsilon r)$-dense in $\uball p_j,r,X_j.$. From a
  point $q\in\chrsheet$ to change color labels at the positions $s > j_0$
  one needs to travel a distance at most $4\slen(X_{j_0})\le
  4\varepsilon^2r<\varepsilon r$. As $\grid p_0.$ is $\varepsilon
  r$-dense in $\uball p_0,r,X_0.$ we conclude that $\grid p_j,r.$ is
  $(5\varepsilon r)$-dense in $\uball p_j,r,X_j.$. Finally
  by~(\ref{eq:shape-balls-s1}) there is a uniform bound
  $C=C(\varepsilon)$ on the cardinality of $\grid p_j,r.$. We will
  call $\grid p_j,r.$ \textbf{a fundamental configuration at $p_j$,
  at scale $r$ and resolution $\varepsilon$}: we will denote it by
  $\fundsub p_j,,,X_j.$.
  \par Finally, we can obtain $\fundsub p_\infty,,, X_\infty.$ by a
  limiting procedure. In fact, each $\chrsheet\in\chrom j_0.$ gives
  rise to a sequence fo chromatic sheets $\chrsheet^{(j_0)}=\chrsheet,
  \chrsheet^{(j_0+1)}, \cdots$ where we keep appending green to the
  labels. This yields a limit sheet $\chrsheet^{(\infty)}\subset
  X_\infty$ and we can then let  $\grid\chrsheet^{(\infty)}.=\chrsheet^{(\infty)}\cap(\proj
  \infty,0.|\chrsheet^{(\infty)})^{-1}(\grid p_0.)$ and then proceed
  as above.
\end{defn}
\begin{lem}[The measures are doubling]
  \label{lem:ver-doubling}
  The measures $\mu_j$ ($j=\infty$ being admissible) are uniformly
  doubling, i.e.~there is a universal constant $C$ such that for each
  $j\in\natural\cup\{\infty\}$, $p_j\in X_j$ and $r\in(0,\sqrt{3})$
  one has:
  \begin{equation}
    \label{eq:ver-doubling-s1}
    \mu_j\biggl(
    \uball p_j,r,X_j.
    \biggr) \le C\mu_j\biggl(
      \uball p_j,r/2,X_j.
    \biggr).
  \end{equation}
\end{lem}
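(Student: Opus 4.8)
The plan is to reduce the doubling estimate to a counting problem about chromatic sheets, using the representation (MuChrom) of $\mu_{X_{l+1}}$ as a uniform average of copies of Lebesgue measure on the chromatic sheets, together with the already-established control (\ref{eq:shape-balls-s2}) on how many sheets a ball can meet. First I would handle the case $j=k$ finite and then pass to $j=\infty$ by the measured Gromov--Hausdorff convergence $\mu_k\to\mu_\infty$ recorded in Construction~\ref{constr:inv-lim-sys} (balls and their doubling ratios are stable under mGH convergence of uniformly doubling spaces, or one can argue directly on $X_\infty$ using the limit chromatic sheets $\chrsheet^{(\infty)}$).

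For fixed $k$, write $j_0=\lg(r)$ and use (\ref{eq:shape-balls-s1}) with $l=j_0$: the ball $\uball p_k,r,X_k.$ is contained in $\projinv k,j_0.(\uball p_{j_0},r+4\slen(X_{j_0}),X_{j_0}.)$, and since $\slen(X_{j_0+1})\le r$ we have $r+4\slen(X_{j_0})\lesssim \slen(X_{j_0})$; conversely it contains $\projinv k,j_0.(\uball p_{j_0},r/2-\text{(small)},X_{j_0}.)$ up to adjusting constants, so both $\uball p_k,r,X_k.$ and $\uball p_k,r/2,X_k.$ are comparable, via $\projinv k,j_0.$, to preimages of balls in $X_{j_0}$ of radius $\simeq\slen(X_{j_0})$. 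Now by (MuChrom), $\mu_k(\projinv k,j_0.(E))=2^{-k}\sum_{\chrsheet\in\chrom X_k.}\lebmeas3.(\chrsheet\cap\projinv k,j_0.(E))$; grouping the $2^{k-j_0}$ sheets of $X_k$ that lie over a fixed sheet of $X_{j_0}$, and using (IsoChrom) so that each restricted projection $\proj k,j_0.:\chrsheet\to[0,1]^3$ is an isometry, one gets
\begin{equation}
  \label{eq:ver-doubling-reduction}
  \mu_k\bigl(\projinv k,j_0.(E)\bigr) = 2^{-j_0}\sum_{\chrsheet'\in\chrom X_{j_0}.}\lebmeas3.\bigl((\proj{j_0},0.|\chrsheet')^{-1}(E)\cap\chrsheet'\bigr)\cdot\frac{\#\{\text{sheets of }X_k\text{ over }\chrsheet'\text{ meeting }\projinv k,j_0.(E)\}}{2^{k-j_0}},
\end{equation}
so up to the factor $2^{-j_0}$ the measure of such a preimage is a sum over sheets $\chrsheet'$ of $X_{j_0}$ (each meeting the relevant ball) of the Euclidean volume of a Euclidean ball of radius $\simeq\slen(X_{j_0})$, times a weight in $[\,2^{-(k-j_0)},1\,]$. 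The key point is that for the ball of radius $r$ the number of sheets $\chrsheet'\in\chrom X_{j_0}.$ that are hit is $\lesssim 2^{j_0-\lg(r)}\simeq 1$ by (\ref{eq:shape-balls-s2}) (since $j_0=\lg(r)$), and at least one sheet is hit always; and the Euclidean volumes for radius $r$ versus $r/2$ differ only by the fixed factor $2^3$. This already gives (\ref{eq:ver-doubling-s1}) with a universal constant once one checks the weights cause no loss --- which they don't, because the lower bound for the $r/2$-ball keeps the full sheet through $p_k$ with weight $1$.

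The main obstacle, and the place that needs care rather than a one-line estimate, is the passage between the two scales near the "threshold'' where $\slen(X_{j_0})$ sits between $r$ and $r/2$: halving $r$ can drop $j_0=\lg(r)$ by one, changing both the Euclidean radius scale and the sheet-count bound. I would absorb this by noting $\lg(r/2)-\lg(r)\in\{0,1\}$ (from $\slen(X_{l+1})\le\tfrac12\slen(X_l)$, cf.\ Definition~\ref{defn:discrete-log}) and, when it is $1$, applying the finite-scale estimate twice or comparing directly to the ball of radius $\slen(X_{\lg(r)+1})$; the sheet-count bound (\ref{eq:shape-balls-s2}) is uniform enough that an extra factor from one level of $\lg$ is harmless. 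For $j=\infty$ I would either invoke stability of doubling under mGH limits or repeat the above verbatim with $\chrom X_k.$ replaced by the limit sheets $\chrsheet^{(\infty)}$ and (MuChrom) replaced by its $k\to\infty$ form, which holds since $\proj\infty,j_0.$ restricted to each limit sheet is still an isometry onto $[0,1]^3$.
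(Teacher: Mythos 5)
Your setup (decompose $\mu_k$ via the chromatic sheets using (MuChrom) and (IsoChrom), use \eqref{eq:shape-balls-s1} and \eqref{eq:shape-balls-s2}, then handle $j=\infty$ by a limit) is the right framework, and the upper bound goes through as you say: $\uball p_k,r,X_k.\subset\projinv k,\lg(r).(\uball p_{\lg(r)},r,X_{\lg(r)}.)$, push forward, count sheets by \eqref{eq:shape-balls-s2}, and each sheet contributes Lebesgue measure $\lesssim r^3$, giving $\mu_k(\uball p_k,r,X_k.)\lesssim 2^{-\lg(r)}r^3$. The gap is in the lower bound, and it is a genuine one, not a bookkeeping issue.

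First, the set $\uball p_k,r/2,X_k.$ is \emph{not} comparable to any preimage $\projinv k,j_0.$ of a ball in $X_{j_0}$ with $j_0=\lg(r)$: by \eqref{eq:shape-balls-s1} applied with $r=0$, the single fiber $\projinv k,j_0.(p_{j_0})$ already has diameter up to $8\slen(X_{j_0})$, and $\slen(X_{j_0})=\slen(X_{\lg(r)})>r$, so even a point's preimage cannot fit inside the $r/2$-ball. More seriously, the quantity you need to bound below is (in your notation) $2^{-j_0}\sum_{\chrsheet'}\lebmeas 3.(\chrsheet'\cap\cdot)\,w_{\chrsheet'}$ where $w_{\chrsheet'}$ is the fraction of sheets of $X_k$ over $\chrsheet'$ whose intersection with $\uball p_k,r/2,X_k.$ carries Euclidean volume $\gtrsim r^3$; your stated control $w_{\chrsheet'}\in[2^{-(k-j_0)},1]$ is the trivial bound (``at least one sheet hits''), and the resulting lower bound is only $\gtrsim 2^{-k}r^3$, which is off by a factor $2^{k-\lg(r)}$ from the upper bound; taking ``the full sheet through $p_k$ with weight $1$'' gives exactly this inadequate $2^{-k}r^3$. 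What is actually needed, and what the paper supplies, is a \emph{uniform} lower bound on this weight: starting from a sheet $\chrsheet$ of $X_k$ through $p_k$, one can change any of the last $k-\lg(r/16)\ge k-\lg(r)-9$ entries of the color label at total travel cost $\le r/4$ (summing a geometric series of gate-crossing distances), so that at least $2^{k-\lg(r)-9}$ sheets of $X_k$ intersect $\uball p_k,r/2,X_k.$ in a set of $\lebmeas 3.$-measure $\gtrsim r^3$. That sheet-counting step is the whole content of the lower bound, and your sketch omits it.
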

\begin{proof}
We treat the case $j<\infty$ as then $j=\infty$ follows by a limiting
argument. Combining~(\ref{eq:shape-balls-s2})
with~(\ref{eq:constr-inv-lim-sys-12}) we deduce:
\begin{equation}
  \label{eq:ver-doubling-p1}
  \mu_j\biggl(
    \uball p_j,r,X_j.
  \biggr)\le\frac{4\pi}{3}C r^3 2^{-\lg(r)},
\end{equation}
$C$ being the constant from~(\ref{eq:shape-balls-s2}).
\par Fix a chromatic sheet $\chrsheet$ containing $p_j$; by traveling
a distance $\le r/4$ we can change all the last $j-\lg(r/16)\ge
j-\lg(r)-9$ entries of the color label of $\chrsheet$. Thus:
\begin{equation}
  \label{eq:ver-doubling-p2}
  \bigcup_{\tilde\chrsheet\in\mathcal{S}}\tilde\chrsheet\cap\proj
  j,0.^{-1}(\uball {\proj j,0.(p_0)},r/4,X_0.)\subset\uball p_j,r/2,X_j.,
\end{equation}
where $\mathcal{S}\subset\chrom X_j.$ has cardinality at least
$j-\lg(r)-9$. We then have:
\begin{equation}
  \label{eq:ver-doubling-p3}
  \mu_j\left(
    \uball p_j,r/2,X_j.
    \right)\ge\frac{4\pi}{3}\frac{r^3}{64}2^{-\lg(r)-9}.
\end{equation}
\end{proof}
\begin{defn}[The horizontal gradient]
  \label{defn:hz-grad}
  We want to describe the horizontal gradient $\nabla$ in $X_l$ for
  $l\in\natural\cup\{0,\infty\}$. For $l=0$ we just take the usual
  gradient as $X_0=[0,1]^3$. In general, for $l<\infty$ the measure
  $\mu_l$ has a \textbf{$3$-rectifiable representation}, i.e.~it can
  be represented as an integral of measures associated to
  $3$-rectifiable sets:
  \begin{equation}
    \label{eq:hz-grad-1}
    \mu_{X_l}=2^{-l}\sum_{\chrsheet\in\chrom X_l.}\lebmeas 3.\on\chrsheet;
  \end{equation}
  as each $\chrsheet\in\chrom X_l.$ can be identified with $[0,1]^3$ we
  can take the standard gradient $\nabla$ on each $\chrsheet$ and
  obtain the horizontal gradient $\nabla$ on $X_l$.
  \par Let $\vec x$ be the tuple $(x^1,x^2,x^3)$ of coordinate
  functions on $[0,1]^3$; with abuse of notation we will also write
  $\vec x$ for $\vec x\circ\proj l,0.$; then at each $p\in X_l$ one
  has $\nabla \vec x(p) = \text{Id}_{\real^3}$.
  \par For $l=\infty$ let $\chrom X_\infty.$ denote the set of all
  the sequences $(\chrsheet_i)_{i=0}^\infty$ where $\chrsheet_i\in\chrom
  X_i.$ and $\proj i+1,i.(\chrsheet_{i+1})=\chrsheet_i$ for each
  $i$. Then $(\chrsheet_i)_{i=0}^\infty$ admits an inverse limit
  $\chrsheet_\infty$, and note that $\chrsheet_\infty$ also completely
  determines the sequence $(\chrsheet_i)_{i=0}^\infty$ letting
  $\chrsheet_i=\proj \infty,i.(\chrsheet_\infty)$. The uniform probability
  measures $P_l=2^{-l}$ on $\chrom X_{l}.$ pass to the limit to a
  probability measure $P_\infty$ on $\chrom X_\infty.$. More
  concretely, using sequences on green and red, we can identify
  $\chrom X_\infty.$ with the standard Cantor set and $P_\infty$
  becomes the corresponding standard probability measure. Taking the
  limit in~(\ref{eq:hz-grad-1}) we get:
  \begin{equation}
    \label{eq:hz-grad-2}
    \mu_\infty = \int_{\chrom X_\infty.}\lebmeas 3.\on \chrsheet\,dP_\infty(\chrsheet);
  \end{equation}
  on each $\chrsheet$ the operator $\nabla$ is well-defined, and
  thanks to~(\ref{eq:hz-grad-2}) we can combine them to obtain the
  horizontal derivative on $X_\infty$. Note also that $\nabla \vec x =
  \text{Id}_{\real^3}$ on $X_\infty$ where with abuse of notation we
  have written $\vec x$ for $\vec x\circ\proj\infty,0.$.
\end{defn}
\subsection{Horizontal paths with jumps}
\MyOutline{
\label{subsec:hz-paths}
\begin{enumerate}
\item Definition $\checkmark$
\item Existence of horizontal paths with jumps $\checkmark$
\end{enumerate}
}
\begin{defn}[Horizontal paths]
  \label{defn:hz-paths}
  A \textbf{horizontal segment} $\sigma$ in $X_j$ ($j=\infty$ being
  admissible) is a geodesic segment such that $\proj j,0.(\sigma)$ is
  a segment of $X_0$ parallel to one of the coordinate axes of
  $[0,1]^3$. We allow for a segment to be degenerate, i.e. to be~just a point.
  \par A \textbf{horizontal path} $\ph$ in $X_j$ is a finite tuple
  $\ph=(\sigma_1,\cdots,\sigma_N)$ of horizontal segments such that
  for $1\le i < N$ the end point of $\sigma_i$ is the starting point
  of $\sigma_{i+1}$. The \textbf{length} of $\ph$ is the sum of the
  lengths of its segments:
  \begin{equation}
    \label{eq:hz-paths-1}
    \len(\ph) = \sum_{i=1}^N\len(\sigma_i).
  \end{equation}
\end{defn}
\begin{defn}[The set of total jump pairs]
  \label{defn:tot-jump-pairs}
  The set $\tjumpp X_j.$ of \textbf{total jump pairs} of $X_j$
  ($j=\infty$ being admissible) consists of all $\{q,q'\}\subset X_j$
  such that:
  \begin{enumerate}
  \item Either $j<\infty$ and one has $(q,q')\in\jumpp X_j.$; in this
    case $\gates \{q,q'\}.$ is the union of the two gates of $X_j$
    containing $q,q'$.
  \item Or there is an $l<j$ such that $(\proj j,l.(q), \proj
    j,l.(q'))\in\jumpp X_l.$; in this case $\gates \{q,q'\}.$ is the
  union of the two gates of $X_l$ containing $\proj j,l.(q),\proj j,l.(q')$.
  \end{enumerate}
  In this second case note that because
  of~(\ref{eq:constr-inv-lim-sys-9})
  \begin{equation}
    \label{eq:tot-jump-pairs-1}
    d_{X_j}(q,q')=d_{X_l}(\proj j,l.(q), \proj j,l.(q')).
  \end{equation}
\end{defn}
\begin{defn}[Horizontal paths with jumps]
  \label{defn:hz-paths-jumps}
  A \textbf{horizontal path with jumps} $\jph$ in $X_j$ ($j=\infty$
  being admissible) is a finite alternating tuple
  $(\ph_1,\jp_1,\ph_2,\cdots,\jp_{N-1},\ph_N)$ where:
  \begin{enumerate}
  \item The $\{\ph_i\}_{i=1}^N$ are horizontal paths and
    $\{\jp_i\}_{i=1}^N\subset\tjumpp X_j.$.
  \item For $1\le i\le N-1$ $\jp_i$ consists of the end point of
    $\ph_i$ and the starting point of $\ph_{i+1}$.
  \end{enumerate}
  The \textbf{length} of $\jph$ is:
  \begin{equation}
    \label{eq:hz-paths-jumps-1}
    \len(\jph) = \sum_{i=1}^N\len(\ph_i) +
    \sum_{i=1}^{N-1}\{d_{X_j}(q,q'): (q,q')=\jp_i\}.
  \end{equation}
\end{defn}
\begin{lem}[Existence of good horizontal paths with jumps]
  \label{lem:good-hz-paths}
  Let $\fundsub p_l,,,X_l.$ be a fundamental configuration in $X_l$
  ($l=\infty$ being admissible). Then there is a universal constant
  $C$ independent of $l$, $p_l$, $r$ and $\varepsilon$ such that for
  each $q_l\in\fundsub p_l,,,X_l.$ there is either a horizontal path
  $\gamma=\ph$ or a horizontal path with jumps $\gamma=\jph$ such
  that:
  \begin{description}
  \item[(Gd1)] $\gamma$ starts at $p_l$ and ends at $q_l$.
  \item[(Gd2)] If $\gamma=\jph$ there is only one jump,
    i.e.~$\gamma=(\ph_{-},\jp,\ph_{+})$. 
  \item[(Gd3)] $\len(\gamma)\le C d_{X_l}(p_l,q_l)$.
  \item[(Gd4)] With the exception of at most $10$ horizontal segments
    in $\gamma$, for each other horizontal segment $\sigma$ one has:
    \begin{equation}
      \label{eq:good-hz-paths-s1}
      \len(\sigma)\ge\frac{\varepsilon^3r}{400}.
    \end{equation}
  \item[(Gd5)] $\gamma$ contains at most $15$ horizontal segments.
  \end{description}
\end{lem}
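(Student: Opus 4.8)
The plan is to build the path by first constructing a horizontal path in the base cube $X_0$ connecting $p_0=\proj l,0.(p_l)$ to $q_0=\proj l,0.(q_l)$, then lifting it to $X_l$ through the chromatic sheets, correcting at the gates where the lift must switch sheets.

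\smallskip

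First I would work in $X_0=[0,1]^3$. Since $q_l\in\fundsub p_l,,,X_l.$, its projection $q_0$ lies in $\grid p_0.$, so $q_0 = p_0 + ((-1)^{h_1}\varrho_1,(-1)^{h_2}\varrho_2,(-1)^{h_3}\varrho_3)$ with each $\varrho_i\in\disc,.$. I connect $p_0$ to $q_0$ by a horizontal path $\ph_0$ consisting of at most three axis-parallel segments (one per coordinate), of total length $\le\sqrt3\,d_{X_0}(p_0,q_0)\le C\,d_{X_l}(p_l,q_l)$ since $\proj l,0.$ is $1$-Lipschitz; note $d_{X_l}(p_l,q_l)\gtrsim\varepsilon^2 r$ whenever $q_l\ne p_l$, which already gives a crude lower bound on segment lengths. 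To get the sharper bound (Gd4) I must be slightly careful: rather than going coordinate-by-coordinate, I would, if some $\varrho_i$ is very small (i.e.\ $q_0$ is close to $p_0$ along that axis), simply contract that segment to a point (it is degenerate, which is allowed) or absorb it — the remaining segments have length $\gtrsim\varepsilon^2 r\gg\varepsilon^3 r/400$. This leaves at most $3$ honest segments, well within the budget of (Gd5).

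\smallskip

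Next I lift $\ph_0$ to $X_l$. By \textbf{(IsoChrom)} each chromatic sheet $\chrsheet\in\chrom X_l.$ maps isometrically onto $[0,1]^3$, so inside a single sheet the path lifts verbatim. The obstruction is that $p_l$ and $q_l$ may lie on different sheets, and more importantly that the straight lift of $\ph_0$ may pass through a gate region where no single sheet is available over the whole segment. Here the key point is the analysis from Lemma~\ref{lem:shape-balls}: to change the color label of a sheet one only needs to reach a gate $\gates X_{l+1}.$, at a cost of at most $\slen(X_{s})$ at level $s$, and by the geometric decay $\slen(X_{s+1})\le\tfrac12\slen(X_s)$ the \emph{total} color-correction cost along the whole path is $\lesssim\slen(X_{j_0})\le\varepsilon^2 r$. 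Since the fundamental configuration is built by always appending green, the labels of $p_l$ and $q_l$ agree at all positions $s>j_0$ and differ only among the first $j_0$ positions; correcting those differences requires moving to gates at levels $\le j_0$. Each such correction is a short horizontal detour plus possibly one jump across a jump pair $(c_1,c_2)$ of some doubled cell — but a jump pair connects the centers of the two copies, and by \eqref{eq:constr-inv-lim-sys-14} and \eqref{eq:tot-jump-pairs-1} the jump distance equals the lower-level distance, i.e.\ it is essentially free compared with the ambient geometry. I would organize the corrections so that all needed sheet-changes are folded into a \emph{single} jump, as required by (Gd2): one routes the path once into the gate structure at the coarsest level where $p_l$ and $q_l$ disagree, jumps, and then the remaining lift proceeds on a single sheet; a careful bookkeeping shows one jump suffices because the fundamental configuration only ever branches green.

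\smallskip

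The main obstacle I expect is (Gd4)–(Gd5) together with (Gd2): controlling simultaneously the \emph{number} of segments (at most $15$, with at most $10$ short exceptions) and forcing only \emph{one} jump, while still achieving the multiplicative length bound (Gd3). The delicate accounting is that the base path $\ph_0$ contributes $\le 3$ segments, the detour to a gate contributes a bounded number (a gate at level $s$ sits inside a cell of side $\slen(X_{s-1})$, so reaching it and re-emerging costs $O(1)$ segments), and the color-correction after the jump is, by the ``always green'' structure, trivial — no further segments. The exceptional short segments are exactly the gate-detour segments, whose lengths are comparable to $\slen(X_{s})$ for various $s\le j_0$ and so can fall below $\varepsilon^3 r/400$, but there are only boundedly many of them (at most $10$), which is why (Gd4) is phrased with that slack. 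The quantitative choice $\varepsilon^3 r/400$ in \eqref{eq:good-hz-paths-s1} is calibrated precisely so that every non-exceptional segment, being a piece of $\ph_0$ of length $\gtrsim\varepsilon^2 r$ after the contraction step, clears this threshold with room to spare.
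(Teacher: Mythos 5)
Your high-level strategy (build a base path in $X_0$, then lift and correct) is the right starting point and matches the paper's opening move, but the crucial middle step is missing and some of the justifications you give for it are wrong.

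First, a factual error: you assert that the color labels of $p_l$ and $q_l$ agree at all positions $s>j_0$ ``because the fundamental configuration only ever branches green.'' This controls only $q_l$: by Definition~\ref{defn:fund-epsi-r-conf}, the sheets in $\chrom j.$ for $j>j_0$ are the all-green continuations of sheets in $\chrom j_0.$, so $q_l$'s label is green beyond position $j_0$. But $p_l=\proj\infty,l.(p_\infty)$ and $p_\infty$ is arbitrary, so $p_l$'s label at positions $>j_0$ can be anything. Thus ``the coarsest level of disagreement'' need not be $\le j_0$, and your bookkeeping for the correction budget does not start from the right premise.

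More importantly, you do not supply the mechanism that makes a \emph{single} jump sufficient. You write ``one routes the path once into the gate structure at the coarsest level where $p_l$ and $q_l$ disagree, jumps, and then the remaining lift proceeds on a single sheet,'' and attribute the sufficiency of one jump to the green branching. That is not the reason, and without a replacement this is the main gap. The paper proves (Gd2) by an induction on the level: $\gamma_j$ is built from $\gamma_{j-1}$ maintaining one of three invariants — a plain lift \textbf{(Inv1)}, a path routed through $\partial K$ \textbf{(Inv2)}, or a path with one jump \textbf{(Inv3)} — and the key geometric observation (Steps~2--3 of the paper's proof) is that once $\gamma_{j-1}$ satisfies \textbf{(Inv2)} or \textbf{(Inv3)}, a lift of $\gamma_{j-1}$ to $X_j$ ending at $q_j$ \emph{must} exist. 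The reason is concrete: to reach $\partial K$ or the jump points from $p_{j-1}$, the path necessarily crosses $\partial K_1\cup\partial K_2$, the boundaries of the newly doubled cells in $\todoub X_{j-1}.$, and crossing those boundaries is exactly what lets you set the new color label correctly; and in the special levels $\bar n_k+1$ the jump pair itself admits lifts of all colors. So the transition out of \textbf{(Inv1)} happens at most once, which gives both the single jump and the boundedness in (Gd4)/(Gd5). Your ``lift once to the top and correct along the way'' phrasing suggests multiple independent corrections — each a detour and possibly a jump — and nothing in your argument caps the number of such corrections or shows they can be merged. Saying a gate detour ``costs $O(1)$ segments'' for each needed correction does not yield the global bound of (Gd5); you need the cascading lift property, and you don't have it. The remaining items (the base path construction, the length estimate via $d_{X_\infty}(p_l,q_l)\ge\varepsilon^2 r$, the exceptional short segments) are consistent with the paper, but they are the easy part.
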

\begin{proof}
  The construction will be inductive; for $j<l$ let $p_j=\proj
  l,j.(p_l)$ and $q_j=\proj l,j.(q_l)$.
  \par\noindent\texttt{Step 1: The construction in $X_0$ and $X_1$}
  \par $X_0$ is just $[0,1]^3$ with the Euclidean metric and we know
  that for $i\in\{1,2,3\}$ either $x^i(p_0)=x^i(q_0)$ or
  $|x^i(p_0)-x^i(q_0)|\ge\varepsilon^2 r$. We can then find a
  horizontal path $\gamma_0=\ph_0$ starting at $p_0$, ending at $q_0$
  and satisfying the following conditions (henceforth referred to as
  \textbf{(Inv1)}):
  \begin{description}
  \item[(Inv1:1)] $\gamma_0$ consists of at most $3$ horizontal paths
    and $\len(\gamma_0)\le 3d_{X_0}(p_0,q_0)$.
  \item[(Inv1:2)] The length of each horizontal segment in $\gamma_0$
    is $\ge\varepsilon^3 r$.
  \end{description}
  \par As $\proj 1,0.$ is open we can lift $\gamma_0$: consider the
  set $\lift\gamma_0.$ of all lifts of $\gamma_0$ (i.e.~horizontal
  paths $\gamma_1$ that are mapped to $\gamma_0$ by $\proj 1,0.$)
  starting at $p_1$. If one such lift $\ph_1$ ends at $q_1$ let
  $\gamma_1=\ph_1$ and note that it will satisfy \textbf{(Inv1)}
  (change the subscripts in \textbf{(Inv1:1--2)} from $0$ to $1$).
  \par Assume that this does not happen. As in
  Construction~\ref{constr:nqc-diamond} let $K$ be the subcube that
  gets doubled and $\{c\greendec, c\reddec\}$ the pair of jump
  points. Then $p_1$ and $q_1$ belong to lifts of $K$ lying in
  chromatic sheets with different colors. Note that $\partial K$ can
  be regarded also as a subset of $X_1$ (the metrics $d_{X_1}$ and
  $d_{X_0}$ agree on it).
  \par We first consider the case in which there is a $p_K\in\partial
  K$ such that:
  \begin{equation}
    \label{eq:good-hz-paths-p1}
    d_{X_1}(p_1,p_K) + d_{X_1}(p_K,q_1)\le 8d_{X_1}(p_1,q_1).
  \end{equation}
  Consider the 6 quantities:
  \begin{equation}
    \label{eq:good-hz-paths-p2}
    \biggl\{\bigl|x^i(p_1)-x^i(p_K)\bigr|,
    \bigl|x^i(p_K)-x^i(q_1)\bigr|\biggr\}_{i\in\{1,2,3\}};
  \end{equation}
  as $d_{X_1}(p_1,q_1)\ge\varepsilon^2 r$ at most 5 of the above
  quantities can be $\le \varepsilon^3 r/16$ and thus we can find a
  horizontal path $\gamma_1=\ph_1$ starting at $p_1$ and ending at
  $q_1$ and satisfying  the following conditions (henceforth referred to as
  \textbf{(Inv2)}):
  \begin{description}
  \item[(Inv2:1)] $\gamma_1=(\ph_{-},\ph_{+})$, $\ph_{-}$ joins $p_1$
    to $p_K$ and $\len(\ph_{-})\le 3d_{X_1}(p_1,p_K)$; $\ph_{+}$ joins
    $p_K$ to $q_1$ and $\len(\ph_{+})\le 3d_{X_1}(p_K,q_1)$; as a
    consequence $\len(\gamma_1)\le 24d_{X_1}(p_1,q_1)$.
  \item[(Inv2:2)] With the exception of at most 5 horizontal segments
    in $\gamma_1$, for each other horizontal segment $\sigma$ one has
    $\len(\sigma)\ge\varepsilon^3 r/16$.
  \end{description}
  \par If such a $p_K$ does not exist to change color we must use the
  jump pair $(c\greendec, c\reddec)$; moreover, without loss of generality we
  can assume that:
  \begin{equation}
    \label{eq:good-hz-paths-p3}
    d_{X_1}(p_1, c\reddec) + d_{X_1}(c\reddec,c\greendec) +
    d_{X_1}(c\greendec,q_1) = d_{X_1}(p_1,q_1). 
  \end{equation}
  We can then find a horizontal path with jumps
  $\gamma_1=\jph_1=(\ph_{-},\jp,\ph_{+})$ which satisfies the following conditions (henceforth referred to as
  \textbf{(Inv3)}):
  \begin{description}
  \item[(Inv3:1)] $\ph_{-}$ joins $p_1$ to $c\reddec$, $\jp =
    (c\greendec,c\reddec)$, $\ph_{+}$ joins $c\greendec$ to $q_1$;
    moreover $\len(\ph_{-})\le 3d_{X_1}(p_1,c\reddec)$,
    $\len(\ph_{+})\le 3d_{X_1}(c\greendec,q_1)$ and thus
    $\len(\gamma_1)\le 3d_{X_1}(p_1,q_1)$.
  \item[(Inv3:2)] Except for at most 6 horizontal segments in
    $\gamma_1$,  for each other horizontal segment $\sigma$ one has
    $\len(\sigma)\ge\varepsilon^3 r/16$.
  \end{description}
  For the record we also note that:
  \begin{equation}
    \label{eq:good-hz-paths-p4}
    d_{X_1}(p_1,q_1)\ge 4\slen(X_1).
  \end{equation}
  \par\noindent\texttt{Step 2: The construction in $X_2$.}
  \par Consider the lifts $\lift\gamma_1.$ of $\gamma_1$ in $X_2$
  starting at $p_2$. If one such lift $\tilde\gamma$ ends at $q_2$ let
  $\gamma_2=\tilde\gamma$, which will satisfy the same of
  \textbf{(Inv1--3)} that $\gamma_1$ satisfied.
  \par Supose that there is no such a lift. Then there are
  $Q_1,Q_2\in\todoub X_1.$ such that $p_1\in Q_1$, $q_1\in Q_2$. Let
  $K_i$ be the central subcube of $Q_i$ which is doubled in
  constructing $\dym n_1,Q_i.$; then $p_1\in K_1$ and $q_1\in K_2$ and
  $p_2$, $q_2$ have different color labels at position 2. If
  $\gamma_1$ satisfied \textbf{(Inv2)} we whould have crossed
  $\partial K_1$ following $\gamma_1$ and so we would have been able
  to find a lift that changed the second color label and ended at
  $q_2$. If $\gamma_1$ satisfied \textbf{(Inv3)}, to reach $c\reddec$
  in the lift of $K$, we would again have crossed $\partial K_1$ and
  we would have been able to change the second color label, finding a
  lift ending at $q_2$. We thus conclude that $\gamma_1$ satisfies
  \textbf{(Inv1)}. Now we can essentially argue as before. One
  possibility is that there was a $p_K\in\partial K_1\cup\partial K_2$
  (now regarded as a subset of $X_2$) such that:
  \begin{equation}
    \label{eq:good-hz-paths-p5}
    d_{X_2}(p_1,p_K) + d_{X_2}(p_K,q_1)\le 8d_{X_2}(p_2,q_2).
  \end{equation}
  In this case, as in \texttt{Step 1} we can produce a $\gamma_2$
  starting at $p_2$ and ending at $q_2$ which satisfies
  \textbf{(Inv2)}. Otherwise, we can use either the lifts of the
  center of $K_1$ or $K_2$ to change the second color label and argue
  as in \texttt{Step 1} to produce a $\gamma_2$ satisfying
  \textbf{(Inv3)}.
  \par\noindent\texttt{Step 3: The construction in $X_l$ for
    $2<l<\infty$.}
  \par Consider the lifts $\lift\gamma_{l-1}.$ of $\gamma_{l-1}$ in
  $X_l$ starting at $p_l$. If one such a lift $\tilde\gamma$ ends at
  $q_l$, let $\gamma_l=\tilde\gamma$ which will satisfy the same
  \textbf{(Inv1--3)} that $\gamma_l$ satisfied.
  \par Assume that this is not the case. Then there are
  $Q_1,Q_2\in\todoub X_{l-1}.$ such that, denoting by $K_1,K_2$ the
  central subcubes to be doubled, $p_{l-1}\in K_1$ and $q_{l-1}\in
  K_2$, and the color labels of $p_{l-1}$ and $q_{l-1}$ differ at the
  $l$-th position. Assume that $\gamma_{l-1}$ satisfied
  \textbf{(Inv2)} and that the transition from \textbf{(Inv1)} to
  \textbf{(Inv2)} (note that $\gamma_0$ will always satisfy
  \textbf{(Inv1)}) occurred at level $j$. Let then $K$ denote the
  central subcube in $X_{j-1}$ whose boundary was used by $\gamma_j$
  to change the color label at position $j$. Then to reach $\partial
  K$ (which can be regarded as a subset of $X_{l-1}$) from $p_{l-1}$ we
  would have crossed $\partial K_1$, so we would have been able to
  lift $\gamma_{l-1}$ to end at $q_l$. Assume that $\gamma_{l-1}$
  satisfied invariant \textbf{(Inv3)}. Let $\{c_1,c_2\}$ denote the
  couple of jump points used in $\gamma_{l-1}$. Then there are two
  cases to consider. One is that $\{c_1,c_2\}$ can be lifted also to
  change the $l$-th color label. Recalling \texttt{Step 2} of
  Construction~\ref{constr:inv-lim-sys} this can happen in
  constructing $X_{\bar n_k+1}$ because all $Q\in\cell X_{\bar n_k}.$
  get replaced by $\dym n_{k+1},Q.$. If this is not the case, then to
  reach $c_1$ or $c_2$ we have to cross $\partial K_1\cup\partial K_2$
  and so can change the $l$-th color label. This implies that a lift
  of $\gamma_{l-1}$ ending at $q_l$ has to exist. We thus conclude that
  $\gamma_{l-1}$ has to satisfy \textbf{(Inv1)} and we can argue as in
  \texttt{Step 2} finding $\gamma_l$ satisfying either \textbf{(Inv2)}
  or \textbf{(Inv3)}.
  \par\noindent\texttt{Step 4: The case $l=\infty$.}
  \par For $l=\infty$ we use a limiting argument. Consider the
  sequence $\{\gamma_j\}$: after some $j_0$ all the $\gamma_j$'s have to
  satisfy the same \textbf{(Inv1--3)}. Then for $j> j_0$, $\gamma_j$
  is a lift of $\gamma_{j+1}$ and we can obtain $\gamma_\infty$ as the
  inverse limit of the $\{\gamma_j\}_{j\ge j_0}$.
\end{proof}
\subsection{Existence of disconnected tangents}
\label{subsec:disc-tangents}
\begin{thm}[Existence of disconnected tangents]
  \label{thm:disc-tangs}
  At $\mu_\infty$-a.e.~$p_\infty\in X_\infty$ there is a
  tangent/blow-up which is not topologically connected.
\end{thm}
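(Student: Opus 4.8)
The plan is to show that at $\mu_\infty$-a.e.\ point infinitely many of the rescaled balls split into two pieces separated by a definite gap, and that such a gap survives in a blow-up. Call a level $l\ge1$ a \emph{splitting level} for $p_\infty$ if the cell $Q\in\cell X_{l-1}.$ containing $\proj \infty,l-1.(p_\infty)$ lies in $\todoub X_{l-1}.$ and $\proj \infty,l-1.(p_\infty)$ is within $\delta_l/10$ of the center of $Q$, where $\delta_l=\tfrac1{4n}\slen(X_{l-1})$ is the length of the jump pair created inside $\dym n,Q.$ (with $n\ge100$ the diamond parameter used at that level). Since $\delta_l/10<\slen(X_{l-1})/6$, for such $p_\infty$ the point $\proj \infty,l.(p_\infty)$ lies in one of the two lifts $K_1,K_2$ of the central cube $K$ of $Q$, within $\delta_l/10$ of the corresponding center $c_i$; write $c_{i'}$ for the other one. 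The proof then has three parts: a disconnected ball at each splitting level, a soft passage to a tangent, and a Borel--Cantelli argument that a.e.\ point has infinitely many splitting levels.

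For the first part, fix a splitting level $l$ and put $r=4\delta_l$. Let $U$ (resp.\ $V$) be the set of points of $\uball p_\infty,r,X_\infty.$ whose chromatic label at position $l$ agrees with (resp.\ differs from) that of $p_\infty$; I claim $\uball p_\infty,r,X_\infty.=U\sqcup V$ with $U,V\ne\emptyset$, both open in the ball, and $d_{X_\infty}(U,V)\ge\delta_l$. Using that $\proj \infty,l.$ is $1$-Lipschitz and $d_{X_l}(c_i,\partial K)=\slen(X_{l-1})/6>r$, the ball meets neither $\partial K$, nor the outer part $A=Q\setminus K$, nor any cell outside $Q$, so every point of it has a well-defined position-$l$ label and the ball is contained in $U\cup V$; by \eqref{eq:shape-balls-s1} the fibre of $c_i$ meets the ball (so $p_\infty\in U$) and, via the level-$l$ jump, so does the fibre of $c_{i'}$ (so $V\ne\emptyset$). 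The crux is the gap estimate, which I would extract directly from the chain definition \eqref{eq:constr-inv-lim-sys-10}: any chain joining a point with $p_\infty$'s position-$l$ label to one with a different position-$l$ label must either contain a $\varrho$-step realising the level-$l$ jump pair of $Q$ or one of its lifts, of cost exactly $\delta_l$ by \eqref{eq:constr-inv-lim-sys-14}, or visit $\partial K$, of total cost $\ge2(\slen(X_{l-1})/6-r)\gg\delta_l$, since jumps introduced at later levels never change the position-$l$ label and those introduced at earlier levels cost $\gg\delta_l$. Hence $d_{X_\infty}(U,V)\ge\delta_l$; in particular every point of $U$ is at distance $\ge\delta_l$ from $V$, so $U$ (and likewise $V$) is open in the ball. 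This gap estimate is the step I expect to be the main obstacle: one must argue with the chain metric rather than with a path metric, and it is precisely the fact that the jump pair is a metric shortcut \emph{not realised by any path} that makes the ball fall apart.

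The same computation, with $r$ replaced by any $\rho$ with $4\delta_l\le\rho\le\slen(X_{l-1})/12$, gives $\uball p_\infty,\rho,X_\infty.=U_\rho\sqcup V_\rho$ with $p_\infty\in U_\rho$ and $d_{X_\infty}(U_\rho,V_\rho)\ge\delta_l$. Now assume $p_\infty$ has splitting levels $l_1<l_2<\cdots$ and set $r_m=4\delta_{l_m}\to0$. By Lemma~\ref{lem:ver-doubling} the pointed spaces $(\tfrac1{r_m}X_\infty,p_\infty)$ are uniformly doubling, hence precompact in the pointed Gromov--Hausdorff topology; pass to a convergent subsequence with limit $(Y,q)$. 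Every $y\in Y$ is the limit of points $z_m\in X_\infty$ with $d_{X_\infty}(p_\infty,z_m)=O(r_m)$, and since $\slen(X_{l_m-1})/(12r_m)\to\infty$, for $m$ large $z_m$ lies in some $U_{\rho_m}\sqcup V_{\rho_m}$ with $\rho_m\le\slen(X_{l_m-1})/12$, i.e.\ on one side or the other of the level-$l_m$ jump of its cell. Letting $G$ (resp.\ $R$) be the set of $y\in Y$ admitting such a defining sequence lying, along a subsequence, always on $p_\infty$'s side (resp.\ the other side), one gets $Y=G\cup R$ with $d_Y(G,R)\ge\liminf_m\delta_{l_m}/r_m=\tfrac14$, and $G,R\ne\emptyset$ because $q\in G$ while $R$ contains the limit of the $c_{i'}$-fibre points. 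Two subsets of $Y$ at positive distance with union $Y$ are disjoint and clopen, so $Y$ is not topologically connected, which is the assertion.

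Finally, $\mu_\infty$-a.e.\ $p_\infty$ has infinitely many splitting levels. Group the levels into the blocks $(\bar n_{j-1},\bar n_j]$ of Construction~\ref{constr:inv-lim-sys}. Since $\proj \infty,l-1\#.\mu_\infty=\mu_{X_{l-1}}$ and, by \eqref{eq:constr-inv-lim-sys-17}, $\mu_{X_{l-1}}$ is a convex combination of Lebesgue measures on isometric copies of $[0,1]^3$, the location of $\proj \infty,l-1.(p_\infty)$ inside its cell is, conditionally on the coarser data, uniform. Inside block $j$ the point stays in non-gate cells until it enters a gate cell, which — the gate cells having side $\simeq\slen(X_{l-1})/(64n)\ll\delta_l/10$ — can only happen at a splitting level; and whenever it is in a non-gate cell, the conditional probability that that level is splitting (i.e.\ that $\proj \infty,l-1.(p_\infty)$ falls in the $\delta_l/10$-ball about the cell center) equals a universal $c\,n^{-3}>0$. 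Hence the conditional probability that block $j$ contains no splitting level is at most $(1-c\,n^{-3})^{\#\text{(block }j)}\le e^{-c}<1$, uniformly in $j$; as the block events are conditionally independent given the position at the block's start, the conditional Borel--Cantelli lemma yields splitting levels in infinitely many blocks for $\mu_\infty$-a.e.\ $p_\infty$, completing the plan.
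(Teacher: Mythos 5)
Your proof is correct and follows essentially the same approach as the paper's: a Borel--Cantelli argument showing that $\mu_\infty$-a.e.\ point has its projection land near a doubled cell's center (equivalently, in a gate) at infinitely many scales, followed by a rescaling at those scales whose subsequential blow-up exhibits a definite gap. The differences --- rescaling by the jump length $\delta_l$ rather than the gate side $\slen(X_{t_k})$, and invoking a conditional Borel--Cantelli lemma in place of the paper's appeal to independence of the block events $E_k$ --- are cosmetic bookkeeping variations on the same mechanism.
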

\begin{proof}
  For each $p_\infty\in X_\infty$ and $l<\infty$ let
  $\proj\infty,l.(p_\infty)=p_l$. Recall that $\mu_\infty$ is a
  probability measure and define:
  \begin{equation}
    \label{eq:disc-tangs-p1}
    E_k = \biggl\{
    p_\infty: \text{for some $l\in\{\bar n_k+1,\cdots,\bar n_{k+1}\}$
      $p_l\in\gates X_l.$}
    \biggr\}.
  \end{equation}
  In \texttt{Step 2} of Construction~\ref{constr:inv-lim-sys} we
  replaced each $Q\in\cell X_{\bar n_k+1}.$ with $\dym n_{k+1},Q.$:
  this implies that the events $E_k$ and $E_{k+j}$ are independent for
  $j\ge 1$. Moreover, there is a universal constant $c>0$ such that
  for each $k$ one has:
  \begin{equation}
    \label{eq:disc-tangs-p2}
    \mu_{\bar n_k+1}(\gates X_{\bar n_k+1}.)\ge\frac{c}{n_{k+1}^3};
  \end{equation}
  consider now $l$ such that $\bar n_k+1<l\le \bar n_{k+1}$: as in
  Construction~\ref{constr:inv-lim-sys} we do not apply the diamond
  construction to the gates at the previous levels, we have
  the estimate:
  \begin{equation}
    \label{eq:disc-tangs-p3}
    \mu_l(\gates X_l.)\ge\frac{c}{n_{k+1}^3}\mu_{l}\biggl(
    	X_l\setminus\bigcup_{j=\bar n_k+1}^{l-1}\projinv l,j.(\gates X_j.)
    \biggr).
  \end{equation}
  We can therefore estimate the measure of $E_k$ from below:
  \begin{equation}
    \label{eq:disc-tangs-p4}
    \begin{split}
    \mu_\infty(E_k)&\ge\frac{c}{n_{k+1}^3}\sum_{t=0}^{n_{k+1}^3-1}\biggl(
    	1-\frac{c}{n^3_{k+1}}
        \biggr)^t\\ &= 1 - \biggl(
		1-\frac{c}{n^3_{k+1}}
      \biggr)^{n^3_{k+1}}\\ &\ge 1-2e^{-c},
    \end{split}
  \end{equation}
  for $k$ sufficiently large. Hence there is a uniform lower bound on
  $\mu_\infty(E_k)$ and by the the Borel-Cantelli Lemma, for
  $\mu_\infty$-a.e.~$p_\infty$ one has $p_\infty\in E_k$ infinitely
  often.
  \par Assume $p_\infty\in E_k$ and let $t_k$ be such that
  $p_{t_k}\in\gates X_{t_k}.$ where $\bar n_k+1\le t_k\le \bar
  n_{k+1}$. Consider the rescaling
  $Y_{t_k}=\frac{1}{\slen(X_{t_k})}X_{t_k}$; then $\uball
  p_{t_k},\sqrt{n_{k+1}},Y_{t_k}.$ has two distinct connected
  components at distance $\ge 1$. For $j>t_{k}$ we can keep lifting
  these components in the rescaling $Y_j=\frac{1}{\slen(X_{t_k})}X_j$
  to conclude that also $\uball p_j,\sqrt{n_{k+1}},Y_j.$ has two
  connected components at distance $\ge 1$. For $k\nearrow\infty$ we
  have $n_{k+1}\nearrow\infty$ and so we have that
  $\biggl(\frac{1}{\slen(X_{t_k})}X_\infty,p_\infty\biggr)$
  subconverges to a metric space having at least two connected
  components at distance $\ge1$.
\end{proof}
\section{Differentiability of real-valued Lipschitz maps}
\label{sec:real-diff}
In this section we prove differentiability for real-valued Lipschitz
maps, Theorem~\ref{thm:real-diff}. The emphasis is to get an estimate to control how fast a
Lipschitz map will collapse together the points appearing in the jump
part of the horizontal paths
with jumps that we constructed in Section~\ref{sec:inv-lim}. The key estimate is given in
Theorem~\ref{thm:gate-collapse}; this result is based on taking
recursive piecewise harmonic approximations of the function,
Definition~\ref{defn:harmonic-appx} and on some elementary PDE~\ref{lem:energy-lwbd}.
\subsection{Harmonic functions}
\label{subsec:harm-funs}
\MyOutline{
\begin{enumerate}
\item Harmonic functions on the cube are Lipschitz in the interior $\checkmark$
\item Lower bound on the energy $\checkmark$
\item Harmonic approximations $\checkmark$
\item An estimate on the rate of collapse of the gates $\checkmark$
\end{enumerate}
}
\begin{lem}[Lipschitz estimate for harmonic functions]
  \label{lem:harm-lip}
  Let $u:U\to\real$ or $l^2$ be harmonic where $U\subset \real^3$ is
  open. Assume that $\ball p_0,r.\subset U$; then there is a universal
  constant $C$, independent of $u, U, p_0$ or $r$, such that $u$ is:
  \begin{equation}
    \label{eq:harm-lip-s1}
    \text{$\frac{C}{r}\|u\|_{L^1(\ball p_0,r.)}$-Lipschitz}
  \end{equation}
in $\ball p_0,r/3.$.
\end{lem}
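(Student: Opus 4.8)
The plan is to prove this via the mean value property and standard interior gradient estimates for harmonic functions, reducing everything to the $L^1$ norm on the ball by the sub-mean-value property. First I would reduce to the scalar case: if $u:U\to l^2$ is harmonic, then for each component $u_j$ the estimate will give $|\nabla u_j|\le \frac{C}{r}\|u_j\|_{L^1(\ball p_0,r.)}$ pointwise on $\ball p_0,r/3.$, and one would like to sum in $j$. The cleanest way is to note that the constant $C$ is universal and apply a vector-valued version of the gradient estimate directly: the Poisson-kernel representation works verbatim for $l^2$-valued harmonic functions, and $\sum_j\|\nabla u_j(x)\|^2\le (C/r)^2\sum_j\|u_j\|_{L^1}^2\le (C/r)^2\|u\|_{L^1(\ball p_0,r.)}^2$ by Minkowski's integral inequality in $L^1(\ell^2)$, so that $\||\nabla u|\|(x)\le\frac{C}{r}\|u\|_{L^1(\ball p_0,r.)}$ on $\ball p_0,r/3.$.

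For the scalar gradient bound itself I would carry out the following steps. Fix $x\in\ball p_0,r/3.$. Since each partial derivative $\partial_i u$ is again harmonic on $U$ and $\ball x,r/3.\subset\ball p_0,2r/3.\subset U$, the mean value property gives $\partial_i u(x)=\av_{\ball x,r/3.}\partial_i u\,d\lebmeas 3.$. Then integrate by parts (the divergence theorem) on $\ball x,r/3.$ to write this average as a boundary integral of $u\,\nu_i$ over $\sphere(x,r/3)$, bounded by $\frac{C}{r}\sup_{\ball p_0,2r/3.}|u|$. Finally, to replace the sup norm by the $L^1$ norm, invoke the sub-mean-value property: $|u|$ is subharmonic, so for any $y\in\ball p_0,2r/3.$ one has $|u(y)|\le\av_{\ball y,r/3.}|u|\,d\lebmeas 3.\le\frac{C}{r^3}\|u\|_{L^1(\ball p_0,r.)}$, since $\ball y,r/3.\subset\ball p_0,r.$. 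Chaining these estimates yields $|\nabla u(x)|\le\frac{C}{r}\|u\|_{L^1(\ball p_0,r.)}$, and since $x\in\ball p_0,r/3.$ was arbitrary this is the desired Lipschitz bound on $\ball p_0,r/3.$.

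The only mildly delicate point — and the one I would flag as the main obstacle — is organizing the radii so that every auxiliary ball used ($\ball x,r/3.$ for the mean value property, $\ball y,r/3.$ for the sub-mean-value step, and the intermediate $\ball p_0,2r/3.$) stays inside $\ball p_0,r.\subset U$; this forces the final good region to be the shrunken ball $\ball p_0,r/3.$ rather than something larger, and is the reason for the factor $1/3$ in the statement. One could optimize the constants, but since only a universal $C$ is claimed, the crude chain above suffices. Scaling and translating to reduce to $p_0=0$, $r=1$ makes the bookkeeping transparent, and then $C$ is literally the constant coming from the Poisson kernel / divergence theorem on the unit ball.
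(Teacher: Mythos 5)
Your argument is correct and follows essentially the same route as the paper's: mean value property for the derivatives, integration by parts to a boundary integral, then a sub-mean-value step to replace the sup by the $L^1$ norm, with the passage to $\ell^2$ handled componentwise via Minkowski's inequality rather than the paper's choice of summing in $j$ already at the boundary-integral stage and then using subharmonicity of $\|u\|_{l^2}$. (One minor bookkeeping note: for $\frac{C}{r}\|u\|_{L^1(\ball p_0,r.)}$ to be dimensionally a Lipschitz constant, the norm must be read as the normalized average $\av_{\ball p_0,r.}|u|\,d\lebmeas 3.$, consistent with the paper's use of $\av$ throughout; your intermediate bound $|u(y)|\le\frac{C}{r^3}\|u\|_{L^1}$ implicitly treats it as the unnormalized integral, which would instead give $\frac{C}{r^4}$ at the end.)
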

\begin{proof}
  The case where $u$ is real-valued is well-explained in~\cite[Ch.~2,Thm.~7]{evans-pdebook}; here we explain the minor
  modifications needed for $l^2$-valued harmonic functions. Let $u_j$
  be a component of $u$ and let $p\in\ball p_0,r/3.$; by the mean
  value property:
  \begin{equation}
    \label{eq:harm-lip-p1}
    \partial_i u_j(p) = \av_{\ball p,r/3.}\partial_i u_j\,d\lebmeas
    3.=\frac{\alpha_3}{r}\av_{\partial \ball p,r/3.}u_j\nu_i\,d\hmeas 2.,
  \end{equation}
where we used integration by parts, $\alpha_3$ denotes a universal
constant and $\nu$ is the outer normal to $\partial\ball
p,r/3.$. Then:
\begin{equation}
  \label{eq:harm-lip-p2}
  \sum_{j=1}^\infty |\partial_i u_j(p)|^2\le\bigl(
\frac{\alpha_3}{r}\bigr)^2\sum_{j=1}^\infty\av_{\partial\ball
  p,r/3.}|u_j|^2\,d\hmeas 2.,
\end{equation}
and so:
\begin{equation}
  \label{eq:harm-lip-p3}
  \hilbnrm\partial_iu(p).\le\frac{\alpha_3}{r}\max_{q\in\partial\ball
    p,r/3.}\hilbnrm u(q).;
\end{equation}
then~(\ref{eq:harm-lip-s1}) follows applying the mean-value property
to $q\in\ball q,r/3.\subset\ball p_0,r.$.
\end{proof}
\begin{lem}[Lower bound on the energy]
  \label{lem:energy-lwbd}
  Let $Q$ be a cube with sidelength $\slen(Q)$ and for
  $s\in(0,\frac{\slen(Q)}{6}]$ let $sQ$ denote the cube with the same
  center as $Q$ and with sidelength $s$. Assume that
  $F:\overline{Q\setminus sQ}\to\real$ is continuous and locally
  Lispchitz in $Q\setminus sQ$: for each compact $K$ contained in the
  interior of $Q\setminus sQ$ the restriction $F|K$ is Lipschitz. Then
  there is a universal constant $c\hardec$ (independent of $Q$ and
  $s$) such that if:
  \begin{equation}
    \label{eq:energy-lwbd-s1}
    \left|\av_{\partial (sQ)}F\,d\hmeas 2.\right|\ge\eta
  \end{equation}
  and $F=0$ on $\partial Q$ then:
  \begin{equation}
    \label{eq:energy-lwbd-s2}
    \int_{Q\setminus sQ}\|\nabla F\|_2^2\,d\lebmeas 3.\ge c\hardec\eta^2 s.
  \end{equation}          
\end{lem}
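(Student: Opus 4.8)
The plan is to reduce the estimate to a one-dimensional averaging argument over spheres centered at the common center of $Q$ and $sQ$. Write $\rho$ for the common center and, for $t \in [s/2, \slen(Q)/2]$, consider the sphere $S_t = \partial(tQ)$ (or, to keep the geometry clean, the round sphere $\partial B(\rho, t)$ — the two are comparable and it suffices to work on an annular region $B(\rho, R_2)\setminus B(\rho, R_1)$ with $R_1 \simeq s$ and $R_2 \simeq \slen(Q)$ that sits inside $Q \setminus sQ$, where the Euclidean geometry is cleanest). Define the spherical average $g(t) = \av_{\partial B(\rho,t)} F \, d\hmeas 2.$. First I would observe that $g$ is (locally) Lipschitz in $t$ by the local Lipschitz hypothesis on $F$, that $|g(R_1)| \gtrsim \eta$ (from hypothesis~\eqref{eq:energy-lwbd-s1}, after comparing $\partial(sQ)$ with a nearby round sphere and absorbing constants), and that $g(R_2) = 0$ or at least $|g(R_2)| \le \frac{1}{2}|g(R_1)|$ — this second point needs a small argument since $F = 0$ only on $\partial Q$, not on $\partial B(\rho, R_2)$; one fixes this by choosing $R_2$ comparable to $\slen(Q)$ so that the part of $\partial B(\rho, R_2)$ lying outside $Q$ has controlled measure, or more simply by using cubes $tQ$ throughout and taking $t = \slen(Q)/2$ where the average is genuinely $0$. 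I will go with the cube formulation to make $g(\slen(Q)/2) = 0$ exact.

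Next, the fundamental theorem of calculus gives $|\eta| \lesssim |g(R_1) - g(R_2)| \le \int_{R_1}^{R_2} |g'(t)| \, dt$. The derivative $g'(t)$ is an average over $S_t$ of the normal (radial) derivative $\partial_r F$ (plus, for cubes, a bounded geometric factor from the non-constant surface measure density, which is harmless). Hence
\begin{equation}
  \label{eq:energy-lwbd-plan-1}
  \eta \lesssim \int_{R_1}^{R_2} \Bigl( \av_{S_t} |\partial_r F| \, d\hmeas 2. \Bigr) dt.
\end{equation}
Now I would apply Cauchy--Schwarz on the sphere: $\av_{S_t}|\partial_r F| \le \bigl(\av_{S_t}|\partial_r F|^2\bigr)^{1/2} = \bigl(\hmeas 2.(S_t)^{-1}\int_{S_t}|\partial_r F|^2\bigr)^{1/2}$, and $\hmeas 2.(S_t) \simeq t^2$. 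Writing $E(t) = \int_{S_t} \|\nabla F\|_2^2 \, d\hmeas 2. \ge \int_{S_t} |\partial_r F|^2 \, d\hmeas 2.$, the coarea formula gives $\int_{R_1}^{R_2} E(t) \, dt \le \int_{Q\setminus sQ}\|\nabla F\|_2^2 \, d\lebmeas 3. =: \mathcal{E}$. Then~\eqref{eq:energy-lwbd-plan-1} with Cauchy--Schwarz in the $t$-variable yields
\begin{equation}
  \label{eq:energy-lwbd-plan-2}
  \eta \lesssim \int_{R_1}^{R_2} \frac{E(t)^{1/2}}{t} \, dt
  \le \Bigl( \int_{R_1}^{R_2} E(t) \, dt \Bigr)^{1/2} \Bigl( \int_{R_1}^{R_2} \frac{dt}{t^2} \Bigr)^{1/2}
  \lesssim \mathcal{E}^{1/2} \cdot R_1^{-1/2} \simeq \mathcal{E}^{1/2} s^{-1/2},
\end{equation}
since $\int_{R_1}^{R_2} t^{-2}\,dt \le R_1^{-1} \simeq s^{-1}$. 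Squaring gives $\mathcal{E} \gtrsim \eta^2 s$, which is exactly~\eqref{eq:energy-lwbd-s2}.

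The main obstacle I anticipate is the regularity bookkeeping: $F$ is only assumed continuous up to the boundary and \emph{locally} Lipschitz in the open annulus, so the spherical average $g(t)$ need not be differentiable at the endpoints, and $\partial_r F$ is only defined a.e. The clean way around this is to prove~\eqref{eq:energy-lwbd-s2} first for $F$ smooth on a slightly smaller closed annulus $\overline{B(\rho,R_1+\delta)\setminus B(\rho,R_2-\delta)}$, where all the manipulations above are rigorous, keeping all constants independent of $\delta$; then let $\delta \searrow 0$ using that the left side of~\eqref{eq:energy-lwbd-s2} only decreases under restriction while the hypothesis~\eqref{eq:energy-lwbd-s1} on the fixed inner sphere $\partial(sQ)$ is stable (or is approached, by continuity of $F$). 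A secondary nuisance is matching the inner cube $sQ$ to a round sphere: one picks $R_1 = s/2$ so that $B(\rho,R_1)\subset sQ$, notes $\partial(sQ)$ and $\partial B(\rho, R_1)$ are uniformly comparable in measure and Hausdorff distance $\lesssim s$, and uses the coarea estimate together with $|\av_{\partial(sQ)}F| \ge \eta$ to deduce $|g(R_1')| \gtrsim \eta$ for \emph{some} $R_1' \in [s/2, s/2 + cs]$ by a pigeonhole on the radial variable inside the thin annulus $B(\rho, s/2+cs)\setminus sQ$ — a cheap argument that only costs a universal constant. All remaining computations are the routine ones I have deliberately not written out.
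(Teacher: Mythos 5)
Your proposal is correct and reaches the same inequality, but it routes the final step differently from the paper. Both arguments center on the radial average $g(t)=\av_{S_t}F\,d\hmeas 2.$ of $F$ over level sets of the distance to the common center of $Q$ and $sQ$. The paper (\texttt{Steps 1--3}) first transports the problem to a round annulus via the explicit bi-Lipschitz map $\Psi(x)=\frac{\|x\|_\infty}{\|x\|_2}x$ (at the cost of deflating $\eta$ by a fixed power of $16$), then observes via Jensen that the spherical symmetrization $\tilde F=g$ has no larger energy, and finally identifies the energy minimizer among radial functions with the given boundary values as $A/r+B$ and computes its Dirichlet energy in closed form, obtaining $\mathcal{E}\ge\pi a^2 s\,\slen(Q)/(\slen(Q)-s)\ge\eta^2 s$. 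You instead stay with cubic level sets $S_t=\partial(tQ)$ (so $g(\slen(Q)/2)=0$ exactly and no change of variables is needed), use the fundamental theorem of calculus $\eta\le\int|g'(t)|\,dt$, bound $|g'(t)|$ by the spherical $L^2$-average of $\|\nabla F\|_2$ via Cauchy--Schwarz (this is exactly where the paper's ``symmetrization lowers energy'' observation is used, just packaged differently), and close with a second Cauchy--Schwarz in $t$ with weight $t^{-2}$, giving $\int_{R_1}^{R_2}t^{-2}\,dt\le R_1^{-1}\simeq s^{-1}$ and hence $\mathcal{E}\gtrsim\eta^2 s$. What your route buys is that it bypasses both the bi-Lipschitz reduction to balls and the explicit comparison solution: the cube geometry only enters through the surface measure comparability $\hmeas 2.(S_t)\simeq t^2$ and the bounded factor $\|\nabla\|x\|_\infty\|_2=1$ in the coarea formula, so the constants are cleaner and the argument is more self-contained. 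What the paper's route buys is the exact capacity computation, which makes it transparent that the bound is sharp up to the bi-Lipschitz loss. Your regularity remark is handled correctly: one may assume $\mathcal{E}<\infty$, in which case the estimate $\int_{s/2}^{\slen(Q)/2}E(t)^{1/2}/t\,dt\lesssim\mathcal{E}^{1/2}s^{-1/2}<\infty$ shows $g$ is absolutely continuous up to the endpoints (using continuity of $F$ on $\overline{Q\setminus sQ}$), so the FTC step is justified without a separate smoothing limit.
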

\begin{proof}
  \MakeStep{Step 1: Reducing the problem to balls.}
  \par Up to a translation we can assume that $Q$ is centered at
  $0$. Let:
  \begin{equation}
    \label{eq:energy-lwbd-p1}
    \begin{aligned}
      \Psi : Q&\to \ball 0,\frac{\slen(Q)}{2}.\\
      x&\mapsto\frac{\|x\|_\infty}{\|x\|_2}x.
    \end{aligned}
  \end{equation}
  We compute $d\Psi$ at a generic point $x$ where $x_1\ne x_2\ne x_3$
  and where $\|x\|_\infty=|x_1|$:
  \begin{equation}
    \label{eq:energy-lwbd-p2}
    \partial_j\Psi_i = \frac{\|x\|_\infty}{\|x\|_2}\biggl(
    	\delta_{ij}-\frac{x_ix_j}{\|x\|_2^2}
    \biggr) + \frac{
      \sgn(x_1)\chi_{j=1}
    }{\|x\|_2} x_i;
  \end{equation}
  on the one hand:
  \begin{equation}
    \label{eq:energy-lwbd-p3}
    \biggl|\biggl\langle
    	d\Psi(x), \frac{x}{\|x\|_2}
    \biggr\rangle\biggr| = \biggl\|
    	\frac{\sgn(x_1)}{\|x_2\|_2^2}x_1(x_1,x_2,x_3)
    \biggr\|_2\ge\frac{1}{3};
  \end{equation}
  on the other hand if $v$ is a unit vector orthogonal to $x$:
  \begin{equation}
    \label{eq:energy-lwbd-p4}
    \langle d\Psi(x),v\rangle = \frac{\|x\|_\infty}{\|x\|_2}v +\sgn(x_1)v_1\frac{x}{\|x\|_2}
  \end{equation}
  and thus
  \begin{equation}
    \label{eq:energy-lwbd-p5}
    |\langle d\Psi(x),v\rangle|\ge 1.
  \end{equation}
  We conclude that $\Psi$ is $(1/16,16)$-bi-Lipschitz and maps $Q$
  onto $\ball 0,\frac{\slen(Q)}{2}.$, $sQ$ onto $\ball
  0,\frac{s}{2}.$, $\partial Q$
  onto $\partial\ball 0,\frac{\slen(Q)}{2}.$ and $\partial(sQ)$ onto $\partial\ball
  0,\frac{s}{2}.$. We can thus reduce to the case in which $F:\ball
  0,\frac{\slen(Q)}{2}.\to\real$, $F=0$ for $r=\frac{\slen(Q)}{2}$
  (we are using polar coordinates) and
  \begin{equation}
    \label{eq:energy-lwbd-p6}
    \av_{\partial\ball 0,\frac{s}{2}.}F\,d\hmeas 2.\ge\eta,
  \end{equation}
  up to changing the sign of $F$ and by replacing the original $\eta$
  with $\eta/16^4$.
  \MakeStep{Step 2: Symmetrization}
  \par Let $\omega\in\sphere^2$ and define:
  \begin{equation}
    \label{eq:energy-lwbd-p7}
    \tilde F(r,\omega)=\tilde F(r) =
    \av_{\sphere^2}F(r,\tilde\omega)\,d\hmeas 2.(\tilde\omega).
  \end{equation}
  As $F$ is locally Lipschitz in $\ball 0,\frac{\slen(Q)}{2}.\setminus
  \ball 0, \frac{s}{2}.$ so is $\tilde F$. We show that $\tilde F$ has
  lower energy than $F$ in $\ball 0,\frac{\slen(Q)}{2}.\setminus\ball 0,
        \frac{s}{2}.$, so it suffices to bound the energy of $\tilde
  F$ from below:
  \begin{equation}
    \label{eq:energy-lwbd-p8}
    \begin{split}
      \int_{\ball 0,\frac{\slen(Q)}{2}.\setminus\ball 0,
        \frac{s}{2}.}\|\nabla\tilde F(r,\omega)\|_2^2\,r^2dr d\omega
      &= \int_{s/2}^{\slen(Q)/2}r^2\,dr\int_{\sphere^2}d\omega\biggl(
      			\av_{\sphere^2}\partial_rF(r,\tilde\omega)\,d\tilde\omega
      		\biggr)^2\\
      &\le\int_{s/2}^{\slen(Q)/2}r^2\,dr\int_{\sphere^2}d\omega\av_{\sphere^2}\bigl(
      	\partial_r F(r,\tilde\omega)
        \bigr)^2\,d\tilde\omega\\
      &\le \int_{\ball 0,\frac{\slen(Q)}{2}.\setminus\ball 0,
        \frac{s}{2}.}\|\nabla F(r,\omega)\|_2^2\,r^2dr d\omega.
    \end{split}
  \end{equation}
  Note also that $\tilde F=0$ for $r=\frac{\slen(Q)}{2}$ and $\tilde
  F=a\ge \eta$ for $r=\frac{s}{2}$.
  \MakeStep{Step 3: Comparison with a harmonic function.}
  \par The minimum energy will be attained by the harmonic function
  with the same boundary conditions as $\tilde F$: the general
  solution is of the form $A/r+B$ and we get:
  \begin{equation}
    \label{eq:energy-lwbd-p9}
    \begin{aligned}
      A &= \frac{as}{2(\slen(Q)-s)}\slen(Q)\\
      B &= -\frac{2A}{\slen(Q)}.
    \end{aligned}
  \end{equation}
  We can then compute the energy of this function as follows:
  \begin{equation}
    \label{eq:energy-lwbd-p10}
    \begin{split}
      4\pi \int_{s/2}^{\slen(Q)/2}\biggl(
      	\frac{A}{r^2}
      \biggr)^2r^2\,dr &= 4\pi A^2\biggl[
      	-\frac{1}{r}
      \biggr]_{r=s/2}^{\slen(Q)/2}\\
      &= 4\pi A^2\frac{2(\slen(Q)-s)}{s\slen(Q)}\\
      &= \pi a^2 s\frac{\slen(Q)}{\slen(Q)-s}\\
      &\ge \eta^2s.
    \end{split}
  \end{equation}
\end{proof}
\begin{rem}
  \label{rem:sobolev}
  For the proof of Lemma~\ref{lem:energy-lwbd} we made the simplest
  assumption of $F$ being locally Lipschitz; one might have made a
  more general one to run the same argument, say assuming the $F$
  belonged to the Sobolev space $W^{1,2}(Q\setminus sQ)$ and extended
  continuously to $\overline{Q\setminus sQ}$.
\end{rem}
\MyOutline{
\begin{macrotest}
  $\skeleton,.$ $\skeleton a,.$ $\skeleton ,b.$ $\skeleton a,b.$ 
  $\hskeleton,.$ $\hskeleton a,.$ $\hskeleton,b.$ $\hskeleton a,b.$
  $\hharmonic,.$ $\hharmonic a,.$ $\hharmonic,b.$ $\hharmonic a,b.$
  $\gharmonic,.$ $\gharmonic a,.$ $\gharmonic,b.$ $\gharmonic a,b.$
\end{macrotest}
}
\begin{defn}[Piecewise harmonic approximations]
  \label{defn:harmonic-appx}
  We define the \textbf{$2$-skeleton} of $X_j$ ($j<\infty$) as:
  \begin{equation}
    \label{eq:harmonic-appx-1}
    \skeleton ,.=\bigcup_{Q\in\cell X_j.}\partial Q;
  \end{equation}
  note that for $l\ge j$ ($l=\infty$ being admissible) $\skeleton,.$
  embedds isometrically in $X_l$.
  \par We define the
  \textbf{$2$-harmonic skeleton} of $X_j$ ($j<\infty$) as:
  \begin{equation}
    \label{eq:harmonic-appx-2}
    \hskeleton,. = \skeleton ,X_{j-1}.\cup\bigcup_{Q\in\gates X_j.}\partial Q;
  \end{equation}
  note that for $l\ge j$ ($l=\infty$ being admissible) $\hskeleton,.$
  embedds isometrically in $X_l$.
  \par Let $f:X_\infty\to\real$ be Lipschitz. We define the
  \textbf{piecewise harmonic approximations} of $f$ as follows. For
  $j\ge0$ let $\gharmonic ,.:X_j\to\real$ be the piecewise harmonic
  function which is harmonic inside each cell of $X_j$ and agrees with
  $f$ on $\skeleton ,.$.
  \par For $j\ge1$ let $\hharmonic ,.:X_j\to\real$ be the piecewise
  harmonic function which agrees with $f$ on $\hskeleton ,X_j.$ and
  such that:
  \begin{enumerate}
  \item For $Q\in\todoub X_{j-1}.$ let $K\greendec$ and $K\reddec$ be
    the lifts of $K_Q$ in $\dym n=n(j),Q.$ and $\gtsheet\greendec$,
    $\gtsheet\reddec$ the corresponding gates; set $Q\greendec = Q\setminus
    K_Q\cup K\greendec\setminus \gtsheet\greendec$ and $Q\reddec = Q\setminus
    K_Q\cup K\reddec\setminus \gtsheet\reddec$. Then $\hharmonic,.$ is
    harmonic in the interior of $Q\greendec\cup Q\reddec$ and agrees
    with $f$ on $\partial Q\cup\partial \gtsheet\greendec\cup\partial
    \gtsheet\reddec$.
  \item For each $Q\in\gates X_j.$ $\hharmonic,.$ is harmonic in the
    interior of $Q$.
  \item Each $Q\in\subdiv X_{j-1}.$ gets isometrically lifted in $X_j$
    and $\hharmonic ,.$ is harmonic in the interior of $Q$ and agrees
    with $f$ on $\partial Q$.
  \end{enumerate}
\end{defn}
  We prove that $\hharmonic,.$ and $\gharmonic,.$ are continuous and
  have distributional derivatives in $L^2$.
  \begin{lem}[Regularity of piecewise harmonic approximations]
    \label{lem:harmonic-reg}
    The function $\hharmonic,.$ and $\gharmonic,.$ are continuous, are
    in $W^{1,2}(X_j,\mu_j)$ and satisfy the energy bounds
      \begin{equation}
    \label{eq:harmonic-appx-3}
    \sup_j\biggl\{
    	\int_{X_j}\|\nabla \hharmonic,.\|_2^2\,d\mu_j,
        \int_{X_j}\|\nabla\gharmonic,.\|_2^2\,d\mu_j
    \biggr\}\le(\glip f.)^2.
  \end{equation}
  \end{lem}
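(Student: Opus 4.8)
The plan is to reduce both assertions to the variational (Dirichlet) characterisation of harmonic functions, comparing $\gharmonic ,.$ and $\hharmonic ,.$ on each piece where they are defined to be harmonic against an explicit Lipschitz competitor obtained by extending $f$ off the relevant skeleton; here $j<\infty$. First I would construct the competitor. As $\skeleton ,X_j.$ and $\hskeleton ,X_j.$ embed isometrically into $X_\infty$ (Definition~\ref{defn:harmonic-appx}) and $f$ is $\glip f.$-Lipschitz on $X_\infty$, the boundary data $f|_{\skeleton ,X_j.}$ and $f|_{\hskeleton ,X_j.}$ are $\glip f.$-Lipschitz for $d_{X_j}$, hence by McShane's lemma they extend to $\glip f.$-Lipschitz functions $g,h\colon X_j\to\real$ with $g=f$ on $\skeleton ,X_j.$ and $h=f$ on $\hskeleton ,X_j.$. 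By \textbf{(IsoChrom)} each chromatic sheet $\chrsheet\in\chrom X_j.$ is (via $\proj j,0.$) an isometric copy of the Euclidean unit cube, so $g|_\chrsheet$ and $h|_\chrsheet$ are $\glip f.$-Lipschitz for the Euclidean metric, and thus $\|\nabla g\|_2,\|\nabla h\|_2\le\glip f.$ holds $\lebmeas 3.$-a.e.\ on every $\chrsheet$, hence $\mu_j$-a.e.\ on $X_j$; since $\mu_j(X_j)=1$ this yields $\int_{X_j}\|\nabla g\|_2^2\,d\mu_j\le(\glip f.)^2$ and likewise for $h$.

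Next I would carry out the comparison piece by piece. Decompose $X_j$ into the closed pieces on which the relevant function is harmonic: for $\gharmonic ,.$ the cells of $\cell X_j.$; for $\hharmonic ,.$ the regions $Q\greendec\cup Q\reddec$ with $Q\in\todoub X_{j-1}.$, the gates $\gtsheet\greendec,\gtsheet\reddec$, and the cells of $\subdiv X_{j-1}.$. In each instance the topological boundary of a piece $P$ lies in $\skeleton ,X_j.$, respectively $\hskeleton ,X_j.$, where the function equals $f$; so $\gharmonic ,.|_P$, respectively $\hharmonic ,.|_P$, is by definition the minimiser of the weighted Dirichlet energy $\int_P\|\nabla\,\cdot\,\|_2^2\,d\mu_j$ among those $W^{1,2}(P,\mu_j\on P)$-functions whose trace on $\partial P$ is $f$ (a minimiser exists because $g|_P$, respectively $h|_P$, is an admissible finite-energy competitor; on a cell $\mu_j$ is a constant multiple of $\lebmeas 3.$ so this is the ordinary harmonic extension, whereas on a region $Q\greendec\cup Q\reddec$ the density of $\mu_j$ jumps across the interior locus $\partial K_Q$ and the minimiser solves the corresponding transmission problem). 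Since $g|_P$ has the same trace on $\partial P$ as $\gharmonic ,.|_P$, minimality gives $\int_P\|\nabla\gharmonic ,.\|_2^2\,d\mu_j\le\int_P\|\nabla g\|_2^2\,d\mu_j$, and similarly with $\hharmonic ,.$ and $h$. Both functions lie in $W^{1,2}(X_j,\mu_j)$ with weak gradient the piecewise gradient: restricted to any chromatic sheet (an isometric copy of $[0,1]^3$) each is $W^{1,2}$ on the members of a finite cover by Lipschitz subdomains, across whose codimension-one, $\mu_j$-null interfaces it is continuous, and such interfaces are removable for $W^{1,2}$. Summing the piecewise inequalities and invoking the bounds for $g$ and $h$ gives~(\ref{eq:harmonic-appx-3}).

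For continuity I would argue that on the interior of a piece the function is a Euclidean harmonic function wherever the space is a manifold, hence smooth there, and that at a point of the interior branch locus $\partial K_Q$ continuity follows from interior regularity for divergence-form elliptic equations with bounded, uniformly elliptic coefficients (De Giorgi--Nash--Moser) applied to the transmission problem. Up to the boundary of a piece the datum is the continuous function $f$ and the piece is a Lipschitz domain (cubes, and the cube-like boundaries of the branched regions, satisfy an exterior cone condition), so the minimiser extends continuously to the closure with boundary value $f$; the pieces then glue to a continuous function on $X_j$ since they agree with $f$ along their common interfaces.

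I expect the delicate step to be the analysis on the branched regions $Q\greendec\cup Q\reddec$: making precise that ``harmonic'' there means the minimiser of the $\mu_j$-weighted Dirichlet energy with the prescribed trace, and establishing its interior continuity at the locus $\partial K_Q$, where $X_j$ fails to be a manifold and the density of $\mu_j$ is discontinuous. Everything else rests on the Dirichlet principle, classical boundary regularity for Lipschitz domains, removability of codimension-one Lipschitz sets for $W^{1,2}$, and the identities \textbf{(IsoChrom)}, \textbf{(MuChrom)} together with the isometric embeddings of the skeletons recorded in Definition~\ref{defn:harmonic-appx}.
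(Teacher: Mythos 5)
Your proof follows essentially the same route as the paper's: the energy bound comes from the Dirichlet principle against a McShane extension of the boundary data (the paper's last paragraph does exactly this), $W^{1,2}$ membership from piecewise regularity plus removability of the codimension-one interfaces, and continuity from elliptic regularity together with an exterior-cone argument for boundary regularity. You are in fact more careful than the paper on one point: you notice that on a branched piece $Q\greendec\cup Q\reddec$ the Euler--Lagrange condition for the $\mu_j$-weighted Dirichlet energy is a genuine transmission problem across the interior branch locus $\partial K_Q$, whereas the paper's proof (which only treats the Dirichlet problem on a cube or an ordinary, unbranched cubular annulus) glosses over this.

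However, your appeal to De Giorgi--Nash--Moser at the branch locus has a real gap. Near a point of $\partial K_Q$, $X_j$ is not a Euclidean domain: it consists of three Euclidean half-balls glued along a flat disk. There is no single scalar divergence-form equation $\text{div}(A\nabla u)=0$ on an open subset of $\real^3$ to which DGNM applies; if you fold the two copies $K\greendec\setminus\gtsheet\greendec$ and $K\reddec\setminus\gtsheet\reddec$ down onto one via $\proj j,j-1.$, you get a coupled $3$-component system, and DGNM for scalar equations does not read off continuity from it. To close the gap you should decouple by symmetry. Write $u_0$ for $\hharmonic,.$ on $Q\setminus K_Q$ and $u_{\text{g}},u_{\text{r}}$ for the restrictions to the two copies of $K_Q\setminus\gtsheet_Q$ (identified via $\proj j,j-1.$). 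Since the weight on $Q\setminus K_Q$ is twice the weight on each copy, the Euler--Lagrange transmission condition at $\partial K_Q$ is $\partial_\nu u_0=\tfrac12\partial_\nu u_{\text{g}}+\tfrac12\partial_\nu u_{\text{r}}$ with $u_0=u_{\text{g}}=u_{\text{r}}$ there. Setting $v=\tfrac12(u_{\text{g}}+u_{\text{r}})$ and $w=\tfrac12(u_{\text{g}}-u_{\text{r}})$, one finds that $u_0$ and $v$ glue to an \emph{ordinary} harmonic function on the unbranched annulus $Q\setminus\gtsheet_Q$ (inner datum $\tfrac12(f|\partial\gtsheet\greendec+f|\partial\gtsheet\reddec)$), while $w$ solves an ordinary Dirichlet problem on $K_Q\setminus\gtsheet_Q$ with $w=0$ on $\partial K_Q$. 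Continuity across $\partial K_Q$ is then immediate from interior regularity of $u_0\cup v$ and boundary regularity of $w$ at the Lipschitz piece $\partial K_Q$; the continuity up to $\partial Q\cup\partial\gtsheet\greendec\cup\partial\gtsheet\reddec$ becomes boundary regularity for these two ordinary annular problems, exactly the situation your exterior-cone argument (and the paper's Perron/barrier argument) covers.

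Two smaller remarks. First, the paper's proof does slightly more than what the lemma asserts: it builds an explicit Legendre-function barrier and records the quantitative modulus-of-continuity estimate~(\ref{eq:harmonic-reg-barrier-3}), which is reused in Theorem~\ref{thm:gate-collapse}; your qualitative argument would not supply that. Second, your removability step for $W^{1,2}$ across the cell interfaces leans on continuity there, so it should be stated after the continuity discussion, not as a freestanding claim.
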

  \begin{proof}
    We are gluing functions which are harmonic in the interia of the
  cells of $X_j$ using compatible boundary conditions. Thus continuity
  and membership in $W^{1,2}(X_j,\mu_j)$ follow if we show that the
  problem:
  \begin{equation}
    \label{eq:harmonic-reg-1}
    \begin{cases}
      \Delta h &= 0\quad\text{in $\Omega$}\\
      h &=G\quad\text{on $\partial \Omega$}
    \end{cases}
  \end{equation}
  where $\Omega$ is either a cube or a cube with an inner smaller cube
  with the same center
  removed (a ``cubular annulus'') and where $G|\partial\Omega$ is
  Lipschitz has a solution which is $C^0(\bar\Omega)$, i.e.~it is
  continuous up to the boundary. We use Perron's Method~\cite[Sec.~2.8]{gilbarg-elliptic-pde}; the desired solution exists as $\Omega$
  has the \emph{exterior cone property:} for each $p\in\partial\Omega$
  there is a small cone:
  \begin{equation}
    \label{eq:harmonic-reg-2}
    C_p = \{p+v: \|v\|_2\le r_p, \text{angle}(v,e_p)\le\alpha_p\}
  \end{equation}
  such that $\ball p,r_p/2.\cap \Omega\subset\ball p,r_p/2.\setminus
  C_p$ and $C_p\cap\partial\Omega=\{p\}$. Then one has to construct~\cite[Ex.~2.12]{gilbarg-elliptic-pde}
  a \emph{local barrier} $w_p$ at $p$:
  \begin{enumerate}
  \item $w_p$ is superharmonic in $\Omega\cap\ball p,s.$ for $s>0$;
    here we will content with $w_p$ harmonic.
  \item $w_p>0$ in $(\bar\Omega\setminus\{p\})\cap\ball p,s.$ and $w_p(p)=0$.
  \end{enumerate}
  We set up a spherical coordinate system $(r,\theta,\phi)$ with
  origin at $p$ and axis $\theta=0$ opposite to $e_p$; the Laplacian
  is given by:
  \begin{equation}
    \label{eq:harmonic-reg-3}
    \Delta = \frac{1}{r^2}\frac{\partial}{\partial r}\bigl(
    r^2\frac{\partial}{\partial r}
    \bigr) +
    \frac{1}{r^2\sin^2\theta}\frac{\partial^2}{\partial^2\phi} +
    \frac{1}{r^2\sin\theta}\frac{\partial}{\partial\theta} \bigl(
    \sin\theta\frac{\partial}{\partial\theta}
    \bigr).
  \end{equation}
  We look for $w_p$ harmonic with ansatz $w_p=r^\lambda f(\theta)$
  where $\lambda>0$; we get the Legendre ODE:
  \begin{equation}
    \label{eq:harmonic-reg-4}
    f''(\theta) + \cot\theta f'(\theta) + \lambda(\lambda+1)f(\theta)=0,
  \end{equation}
  and look for $f(\theta)>0$ when
  $\theta\in[0,\pi-\alpha_p]$. As solution we choose
  $f_\lambda(\theta)=P^0_\lambda(\cos\theta)$ where $P^0_\lambda$ is
  the Legendre function of the first kind
  (see \url{http://dlmf.nist.gov/14}). As $\lambda\searrow0$ 
  $P^0_\lambda(\cos\theta)$ converges to the constant function $1$,
  moreover, $P^0_\lambda(\cos\theta)$ is monotonically decreasing from  $1$ for $\theta=0$
  to $-\infty$ for $\theta=\pi$; depending on $\alpha_p$, $\lambda>0$
  can thus be taken sufficiently small to ensure $f_\lambda(\theta)>0$
  in the desired range of $\theta$.
  \par For further reference we also record the rate of convergence of
  $h(x)$ to $G(p)$ as $x\to p$, given the barrier $w_p$; for
  $\|x-p\|_2\le\delta$, from~\cite[Lem.~2.13]{gilbarg-elliptic-pde}:
  \begin{equation}
    \label{eq:harmonic-reg-barrier-3}
    |h(x)-G(p)|\le \glip G.\|x-p\|_2+\frac{2\|G\|_\infty}{\inf_{\|y-p\|_2>\delta}w_p(y)}w_p(x).
  \end{equation}
  In particular, as we can translate and rotate the same barrier $w_p$
  at the different points of $\partial\Omega$ we get a uniform
  estimate on the convergence of $h$ to $G$.
  \par Finally, harmonic functions minimize
  the $l^2$-energy in the class of functions satisfying their boundary
  conditions. We could then have taken a MacShane extension $\tilde f$
  of $f|\skeleton,.$ or $f|\hskeleton,.$ and
  thus~(\ref{eq:harmonic-appx-3}) follows.
  \end{proof}
\begin{thm}[Rate of collapse of the gates]
  \label{thm:gate-collapse}
  Let $f:X_\infty\to\real$ be Lipschitz. Let $Q\in\todoub X_{l-1}.$
  where $\bar n_{k}+1\le l\le \bar n_{k+1}$ and let $\gtsheet\greendec$,
  $\gtsheet\reddec$ be the two gates of $\dym n_{k+1},Q.$. Having fixed
  $\varepsilon>0$ we say that $Q$ is \textbf{bad} and write
  $Q\in\badset.$ if:
  \begin{equation}
    \label{eq:gate-collapse-s1}
    \biggl|
    	\av_{\partial \gtsheet\greendec}f\,d\hmeas 2. - \av_{\partial
          \gtsheet\reddec}f\,d\hmeas 2.
    \biggr| \ge \frac{\varepsilon}{256\times n_{k+1}}\diam(Q).
  \end{equation}
  Then there is a univeral constant $C$, independent of $f$ and
  $\varepsilon$ such that we have the following estimate on the
  measure of the bad cubes:
  \begin{equation}
    \label{eq:gate-collapse-s2}
    \sum_{k\ge 1}\sum_{l=\bar n_k+1}^{\bar
      n_{k+1}}\sum_{Q\in\badset.}\frac{\varepsilon^2}{n^3_{k+1}}\mu_{l-1}(Q)
    \le C\times(\glip f.)^2.
  \end{equation}
\end{thm}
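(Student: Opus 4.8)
The plan is to use the piecewise harmonic approximations $\hharmonic ,.$ as ``proxies'' for $f$ on the scale where the doubling happens, and to extract the claimed energy budget from Lemma~\ref{lem:energy-lwbd} applied cube by cube. First I would fix $k$ and $l$ with $\bar n_k+1\le l\le \bar n_{k+1}$, and consider the approximation $\hharmonic l,f.$ on $X_l$: on each $Q\in\todoub X_{l-1}.$ it is harmonic on the interior of $Q\greendec\cup Q\reddec$ (the doubled region with the two gates removed) and equals $f$ on $\partial Q$ together with $\partial\gtsheet\greendec\cup\partial\gtsheet\reddec$. The point of the harmonic approximation is twofold: it has controlled total energy by Lemma~\ref{lem:harmonic-reg} (bounded by $(\glip f.)^2$), and on each doubled cube it is precisely the minimizer to which Lemma~\ref{lem:energy-lwbd} applies. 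I would apply Lemma~\ref{lem:energy-lwbd} separately on $Q\greendec\setminus$(inner gate) and on $Q\reddec\setminus$(inner gate), each a cubular annulus $Q\setminus sQ$ with $s=\slen(\gtsheet)\simeq\slen(Q)/n_{k+1}$, with $F=\hharmonic l,f. - c$ where $c=\av_{\partial Q}f\,d\hmeas2.$ is subtracted so that $F$ vanishes (approximately) on $\partial Q$. (A small technical point: $f$ is not literally constant on $\partial Q$, so I would instead compare with the harmonic function that is genuinely $0$ on $\partial Q$ and has the right average on the inner boundary, paying the difference against the already-controlled energy of $\hharmonic l,f.$ on $\partial Q$; alternatively absorb this into the constant $C$.)

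Next I would translate the badness hypothesis~(\ref{eq:gate-collapse-s1}) into a lower bound of the type needed in~(\ref{eq:energy-lwbd-s1}). If $Q$ is bad then
\[
\Bigl|\av_{\partial\gtsheet\greendec}f\,d\hmeas2. - \av_{\partial\gtsheet\reddec}f\,d\hmeas2.\Bigr|\ge \frac{\varepsilon}{256\,n_{k+1}}\diam(Q),
\]
so by the triangle inequality at least one of the two quantities $\bigl|\av_{\partial\gtsheet\greendec}f - c\bigr|$ or $\bigl|\av_{\partial\gtsheet\reddec}f - c\bigr|$ is $\ge \eta := \frac{\varepsilon}{512\,n_{k+1}}\diam(Q)$. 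Say it is the green one. Then Lemma~\ref{lem:energy-lwbd} applied on the green cubular annulus (rescaled so sidelength is $\slen(Q)$, inner sidelength $s\simeq \slen(Q)/n_{k+1}$) yields
\[
\int_{Q\greendec}\|\nabla \hharmonic l,f.\|_2^2\,d\lebmeas3.\ \ge\ c\hardec\,\eta^2\,s\ \gtrsim\ \frac{\varepsilon^2}{n_{k+1}^2}\diam(Q)^2\cdot\frac{\slen(Q)}{n_{k+1}}\ \simeq\ \frac{\varepsilon^2}{n_{k+1}^3}\,\slen(Q)^3\ \simeq\ \frac{\varepsilon^2}{n_{k+1}^3}\,\mu_{l-1}(Q),
\]
using $\diam(Q)\simeq\slen(Q)$ and that $\mu_{l-1}(Q)$ is comparable to $\slen(Q)^3$ up to the fixed factor coming from~(\ref{eq:hz-grad-1}) (the $2^{-(l-1)}$ normalization and the number of chromatic sheets through $Q$ cancel against the multiplicity with which the Lebesgue-integral representation sees $Q$). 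Here I would be careful to match the measure-theoretic normalization: $\int_{Q\greendec}\|\nabla\hharmonic l,f.\|_2^2\,d\lebmeas3.$ must be converted to a piece of $\int_{X_l}\|\nabla\hharmonic l,f.\|_2^2\,d\mu_l$ using~(\ref{eq:hz-grad-1}), and the same $2^{-l}$ weight and sheet-counting that appear in $\mu_{l-1}(Q)$ appear here, so the ratio is a universal constant.

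Finally I would sum. The regions $Q\greendec$ (and $Q\reddec$) for distinct $Q\in\todoub X_{l-1}.$ at a fixed level $l$ have disjoint interiors inside $X_l$, and across different levels $l$ the relevant pieces of $X_l$ correspond under the projections to disjoint regions of $X_\infty$ (the doubled cube at level $l$ sits inside the ``not-yet-a-gate'' part, which is refined at later levels); so the contributions add up without overcounting, each bounded above by $\int_{X_l}\|\nabla\hharmonic l,f.\|_2^2\,d\mu_l\le (\glip f.)^2$ via Lemma~\ref{lem:harmonic-reg}. Summing over bad $Q$, over $l$, and over $k$ gives
\[
\sum_{k\ge1}\sum_{l=\bar n_k+1}^{\bar n_{k+1}}\sum_{Q\in\badset.}\frac{\varepsilon^2}{n_{k+1}^3}\mu_{l-1}(Q)\ \le\ C\sum_{k\ge1}\sum_{l=\bar n_k+1}^{\bar n_{k+1}}\int_{X_l}\|\nabla\hharmonic l,f.\|_2^2\,d\mu_l,
\]
but this would overcount unless I am careful, so instead the correct bookkeeping is: for each level $l$, the sum over $Q\in\badset.$ of the local energies is at most the single global energy $\int_{X_l}\|\nabla\hharmonic l,f.\|_2^2\,d\mu_l\le(\glip f.)^2$; and the \emph{levels themselves} must then be summed using that $\hharmonic l,f.$ restricted to the ungated part of $X_l$ and to different $l$ see essentially disjoint ``annular shells'' of $X_\infty$, so $\sum_l (\text{level-}l\text{ energy in doubled annuli}) \lesssim (\glip f.)^2$ as well. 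The main obstacle, and the step requiring the most care, is exactly this last disjointness/telescoping argument across the infinitely many levels $l$: one must verify that the harmonic approximations at different scales do not ``reuse'' the same energy, which is where the recursive structure of the construction — each doubled cube being subsequently subdivided but never re-doubled until the next block — is essential, together with the observation that $\hharmonic l,f.$ is built to be harmonic precisely on the shell where the $(l$-th$)$ doubling lives.
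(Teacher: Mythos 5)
You correctly identified the two key ingredients (the piecewise harmonic approximations $\hharmonic,.$ and the pointwise energy lower bound of Lemma~\ref{lem:energy-lwbd}), and you also correctly identified the danger: naively, each level $l$ contributes up to $(\glip f.)^2$, and summing over $l$ would diverge. But the mechanism you propose to fix this --- ``the annular shells at different $l$ are essentially disjoint in $X_\infty$'' --- is false, and this is where the argument breaks. When you pass from $X_l$ to $X_{l+1}$, every cell of $\todoub X_{l-1}.$ gets subdivided and its subcells (except the gates) are doubled again, so the level-$l$ and level-$(l+1)$ cubular annuli occupy overlapping regions of $[0,1]^3$ and hence of $X_\infty$. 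There is no spatial disjointness across levels, so $\sum_l\int_{A_l}\|\nabla\hharmonic l,.\|^2\,d\mu_l$ is not dominated by a single copy of the global energy.

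The paper's fix replaces spatial disjointness by \emph{orthogonality in $L^2$ together with telescoping}. The quantity that is fed into Lemma~\ref{lem:energy-lwbd} is not your $F=\hharmonic l,f.-c$ (which, as you note, does not vanish on $\partial Q$ --- Lemma~\ref{lem:energy-lwbd} requires this exactly, and ``absorbing the discrepancy into $C$'' is not available because the lemma has no error term), but rather $F_j = \hharmonic j,f. - \gharmonic j-1,f.\circ\proj j,j-1.$, which vanishes \emph{exactly} on $\partial Q$ since both pieces equal $f$ on $\skeleton 2,X_{j-1}.$. One then proves two orthogonality relations (via integration by parts and a mollification argument), namely
\begin{equation*}
\int_{X_j}(\nabla\hharmonic j,.-\nabla\gharmonic j,.)\cdot\nabla\hharmonic j,.\,d\mu_j=0,
\qquad
\int_{X_j}(\nabla\gharmonic j-1,.\circ\proj j,j-1.)\cdot\nabla F_j\,d\mu_j=0,
\end{equation*}
from which $E[\gharmonic j,.]-E[\gharmonic j-1,.]\ge \int_{X_j}\|\nabla F_j\|_2^2\,d\mu_j\ge0$. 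Summing this telescopes: $\sum_j\int_{X_j}\|\nabla F_j\|_2^2\,d\mu_j\le\lim_n E[\gharmonic n,.]-E[\gharmonic \bar n_1,.]\le(\glip f.)^2$. This is precisely the summability across levels that your disjointness heuristic was trying to supply: the increments $\nabla F_j$ play the role of an orthogonal (Haar-type) decomposition of $\nabla f$, and their energies add up to a single $(\glip f.)^2$ budget regardless of any geometric overlap.

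One more small point: using $F_j$ also automatically gives the lower bound on the gate average you need, because $\gharmonic j-1,f.\circ\proj j,j-1.$ is single-valued across the two gates (it lives on $X_{j-1}$, where the two gates project to the same cell), so subtracting it from $f$ preserves the gate-average gap in~(\ref{eq:gate-collapse-s1}) up to a factor $1/2$, exactly as your triangle-inequality step with the constant $c$ did. So your intuition for Steps~1 and~3 of the paper's proof was essentially right; what you were missing is the telescoping orthogonality that constitutes the paper's Step~2.
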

\begin{proof}
  \MakeStep{Step 1: Orthogonality relations.}
  \par We show that:
  \begin{equation}
    \label{eq:gate-collapse-p1}
    \int_{X_j}(\nabla\hharmonic,.-\nabla\gharmonic,.)\cdot\nabla\hharmonic,.\,d\mu_j=0.
  \end{equation}
  Let $Q\in\cell X_{j-1}.$ and in the first case assume that
  $Q\in\todoub X_{j-1}.$. Let $K_Q$ be the central subcube to be
  doubled in passing from $Q$ to $\dym n=n(j),Q.$ and $K\reddec$,
  $K\greendec$ the two copies of $K_Q$ and $\gtsheet\reddec$, $\gtsheet\greendec$
  the corresponding gates. Write:
  \begin{equation}
    \label{eq:gate-collapse-p2}
    \dym n=n(j),Q. = Q\setminus K_Q \cup K\greendec\setminus
    \gtsheet\greendec \cup K\reddec\setminus \gtsheet\reddec \cup \gtsheet\greendec \cup \gtsheet\reddec;
  \end{equation}
  define $A\greendec = Q\setminus K_Q\cup K\greendec\setminus
  \gtsheet\greendec$ and $A\reddec = Q\setminus K_Q\cup K\reddec\setminus
  \gtsheet\reddec$. As both $\hharmonic,.$ and $\gharmonic,.$ are harmonic in
  the interior of $\gtsheet\greendec \cup \gtsheet\reddec$ and agree on $\partial
  \gtsheet\greendec \cup \partial \gtsheet\reddec$, we have:
  \begin{equation}
    \label{eq:gate-collapse-p3}
    \int_{\gtsheet\greendec \cup
      \gtsheet\reddec}(\nabla\hharmonic,.-\nabla\gharmonic,.)\cdot
    \nabla\hharmonic,.\,d\mu_j=0,
  \end{equation}
  minding that harmonic functions are determined by their boundary
  conditions.
  \par Fix a smooth harmonic function $\phi$ on $A\greendec$ which can
  be extended to be smooth on a neighborhood of $A\greendec$. By
  Lemma~\ref{lem:harmonic-reg} $\gharmonic,.$ and $\hharmonic,.$ are
  in $W^{1,2}(X_j,\mu_j)$ and so we can integrate by parts; as
  $Q\setminus K_Q$ appears both in $A\greendec$ and $A\reddec$ we
  halve $\mu_j$ on it, getting $\tilde \mu_j$ which is 
  just a constant multiple of Lebesgue measure on the whole of $A\greendec$; denoting by
  $\partial_\nu$ the normal derivative on the boundary we get:
  \begin{equation}
    \label{eq:gate-collapse-p4}
    \begin{split}
    \int_{A\greendec}(\nabla\hharmonic,.-\nabla\gharmonic,.)\cdot\nabla\phi\,d\tilde\mu_j &=
    -\int_{A\greendec}(\hharmonic,.-\gharmonic,.)\Delta\phi\,d\tilde\mu_j\\
    &\mskip\munsplit+ \frac{\tilde\mu_j(A\greendec)}{\lebmeas 3.(A\greendec)}\int_{\partial
      A\greendec}(\hharmonic,.-\gharmonic,.) \partial_\nu\phi\,d\hmeas
    2..
  \end{split}
\end{equation}
As $\Delta\phi=0$ in $A\greendec$ and
$\hharmonic,.=\gharmonic,.$ on $\partial A\greendec$ we conclude that:
\begin{equation}
  \label{eq:gate-collapse-p5}
  \int_{A\greendec}(\nabla\hharmonic,.-\nabla\gharmonic,.)\cdot\nabla\phi\,d\tilde\mu_j=0.
\end{equation}
We enlarge the ``cubical annulus'' $A\greendec$ to a slightly larger cubical annulus
$A\egreendec$ which lies in a $(6\varepsilon)$-neighborhood of
$A\greendec$. We then choose $\tilde f_\varepsilon:\partial
A\egreendec\to\real$ to be $1$-Lipschitz on $\partial A\egreendec$ and
such that the graphs of $f|\partial A\greendec$ and $\tilde
f_\varepsilon$ are at Hausdorff distance $\le 150\varepsilon$. Let
$\phi_\varepsilon$ be the harmonic function defined on $A\egreendec$
which agrees with $\tilde f_\varepsilon$ on $\partial A\egreendec$. By
the barrier estimate~(\ref{eq:harmonic-reg-barrier-3}) and the maximum
principle $\phi_\varepsilon\to\hharmonic,.$ uniformly on $A\greendec$
as $\varepsilon\to0$ (here in $A\greendec$ we also include its
boundary). Moreover, as the boundary conditions are $1$-Lipschitz, the
family $\{\nabla\phi_\varepsilon\}_\varepsilon$ is bounded in
$L^2(A\greendec,\tilde\mu_j)$ and we conclude that
$\nabla\phi_\varepsilon$ converges weakly to $\nabla\hharmonic,.$ in
$L^2(A\greendec,\tilde\mu_j)$ as $\varepsilon\to0$. Thus, as each
$\phi_\varepsilon$ satisfies the
orthogonality~(\ref{eq:gate-collapse-p5}), we get:
\begin{equation}
  \label{eq:gate-collapse-p-ins1}
  \int_{A\greendec}(\nabla\hharmonic,.-\nabla\gharmonic,.)\cdot\nabla\hharmonic,.\,d\tilde\mu_j=0,
\end{equation}
and similarly:
\begin{equation}
  \label{eq:gate-collapse-p6}
  \int_{A\reddec}(\nabla\hharmonic,.-\nabla\gharmonic,.)\cdot\nabla\hharmonic,.\,d\tilde\mu_j=0.
\end{equation}
Putting together~(\ref{eq:gate-collapse-p3}),
(\ref{eq:gate-collapse-p5}) and~(\ref{eq:gate-collapse-p6}) we obtain:
\begin{equation}
  \label{eq:gate-collapse-p7}
  \int_{\projinv j,j-1.(Q)}(\nabla\hharmonic,.-\nabla\gharmonic,.)\cdot\nabla\hharmonic,.\,d\mu_j=0.
\end{equation}
The second case is when $Q\in\subdiv X_{j-1}.$; here $Q$ just gets
isometrically lifted in $X_j$ and by construction $\hharmonic,.$ is
harmonic in its interior while $\hharmonic,.$ and $\gharmonic,.$
agree on $\partial Q$. Using again the integration by parts argument
and the smoothing of $\hharmonic,.$ via $\phi_\varepsilon$
we conclude that:
\begin{equation}
  \label{eq:gate-collapse-p8}
  \int_Q(\nabla\hharmonic,.-\nabla\gharmonic,.)\cdot\nabla\hharmonic,.\,d\mu_j=0.
\end{equation}
Combining~(\ref{eq:gate-collapse-p7}) and~(\ref{eq:gate-collapse-p8})
(\ref{eq:gate-collapse-p1}) follows.
\par We now show that:
\begin{equation}
  \label{eq:gate-collapse-p9}
  \int_{X_j}\nabla(\gharmonic j-1,.\circ\proj j,j-1.)\cdot(
  	\nabla\hharmonic,. - \nabla(\gharmonic j-1,.\circ\proj j,j-1.)
  )\,d\mu_j=0.
\end{equation}
Let $Q\in\cell X_{j-1}.$ and consider any lift $\tilde Q$ of $Q$ in
$X_j$ (if $Q\in\todoub X_{j-1}.$ there are two such lifts, a green and
a red one, otherwise there is just one). Now $\hharmonic,.$ and
$\gharmonic j-1,.\circ\proj j,j-1.$ agree on $\partial\tilde
Q=\partial Q$; let $\phi$ be a smooth harmonic function defined on
$\tilde Q$ that can be extended to be smooth on a neighborhood of
$\tilde Q$. By Lemma~\ref{lem:harmonic-reg} $\hharmonic,.$ and
$\gharmonic j-1,.\circ\proj j,j-1.$  are in $W^{1,2}(X_j,\mu_j)$ and
thus we can integrate by parts:
\begin{multline}
  \label{eq:gate-collapse-p10}
    \int_{\tilde Q}\nabla\phi\cdot(
  	\nabla\hharmonic,. - \nabla(\gharmonic j-1,.\circ\proj j,j-1.)
  )\,d\lebmeas 3. \\ = \int_{\partial \tilde Q}(
  	\hharmonic,. - \gharmonic j-1,.\circ\proj j,j-1.)\partial_\nu
        \phi\, d\hmeas 2.\\
         -\int_{\tilde Q}(
  	\hharmonic,. - \gharmonic j-1,.\circ\proj
        j,j-1.)\Delta\phi\, d\lebmeas
    3.=0.
  \end{multline}
Now enlarge $\tilde Q$ to a slightly larger cube $\tilde
Q_\varepsilon$ contained in the $(6\varepsilon)$-neighborhood of
$\tilde Q$. We then choose $\tilde f_\varepsilon:\partial
\tilde Q_\varepsilon\to\real$ to be $1$-Lipschitz on $\partial \tilde Q_\varepsilon$ and
such that the graphs of $f|\partial \tilde Q$ and $\tilde
f_\varepsilon$ are at Hausdorff distance $\le 150\varepsilon$. Let
$\phi_\varepsilon$ be the harmonic function on $\tilde Q_\varepsilon$
which equals $\tilde f_\varepsilon$ on $\partial\tilde Q_\varepsilon$.
By the barrier estimate~(\ref{eq:harmonic-reg-barrier-3}) and the maximum
principle $\phi_\varepsilon\to\gharmonic j-1,.\circ\proj j,j-1.$ uniformly on $\tilde Q$
as $\varepsilon\to0$ (here in $\tilde Q$ we also include its
boundary). Moreover, as the boundary conditions are $1$-Lipschitz, the
family $\{\nabla\phi_\varepsilon\}_\varepsilon$ is bounded in
$L^2(\tilde Q,\lebmeas 3.)$ and we conclude that
$\nabla\phi_\varepsilon$ converges weakly to $\nabla\gharmonic j-1,.\circ\proj j,j-1.$ in
$L^2(\tilde Q,\lebmeas 3.)$ as $\varepsilon\to0$. Thus, as each
$\phi_\varepsilon$ satisfies~(\ref{eq:gate-collapse-p10}), we conclude that:
\begin{equation}
  \label{eq:gate-collapse-p-inv-10}
    \int_{\tilde Q}(\nabla\gharmonic j-1,.\circ\proj j,j-1.)\cdot(
  	\nabla\hharmonic,. - \nabla(\gharmonic j-1,.\circ\proj j,j-1.)
  )\,d\lebmeas 3. = 0.
\end{equation}
 Minding that $\mu_j$ is a constant multiple of Lebesgue measure on
 $\projinv j,j-1.(Q)$ for $Q\in\subdiv X_{j-1}.$ and splitting
 $\mu_{j-1}$ in half on the two lifts of $Q\in\todoub X_{j-1}.$ we get
 that~(\ref{eq:gate-collapse-p9}) follows
 from~(\ref{eq:gate-collapse-p-inv-10}).
 \MakeStep{Step 2: The telescopic series.}
 \par For $\psi:X_j\to\real$ define:
 \begin{equation}
   \label{eq:gate-collapse-p11}
   E[\psi] = \int_{X_j}\|\nabla\psi\|^2\,d\mu_j.
 \end{equation}
 Consider the telescopic series:
 \begin{equation}
   \label{eq:gate-collapse-p12}
   \begin{split}
     \sum_{j=\bar n_1+1}^\infty\bigl\{
     	E[\gharmonic,.]-E[\hharmonic,.]+E[\hharmonic,.]-E[\gharmonic j-1,.]
     \bigr\} &= \lim_{n\to\infty}\bigl(
     	E[\gharmonic n,f.]-E[\gharmonic\bar n_1,.]
     \bigr)\\
     &\le(\glip f.)^2.
   \end{split}
 \end{equation}
 Now
 \begin{equation}
   \label{eq:gate-collapse-p13}
   E[\gharmonic,.] = \int_{X_j}\|\nabla\gharmonic,.-\nabla\hharmonic,.+\nabla\hharmonic,.\|^2\,d\mu_j
 \end{equation}
 and using the orthogonality relation~(\ref{eq:gate-collapse-p1}) we
 get:
 \begin{equation}
   \label{eq:gate-collapse-p14}
   \begin{split}
   E[\gharmonic,.] &= E[\hharmonic,.] +
   \int_{X_j}\|\nabla\gharmonic,.-\nabla\hharmonic,.\|_2^2\,d\mu_j\\
   &\ge E[\hharmonic,.].
 \end{split}
\end{equation}
Similarly, using the orthogonality
relation~(\ref{eq:gate-collapse-p9}) and that $E[\gharmonic j-1,.] =
E[\gharmonic j-1,.\circ\proj j,j-1.]$ we get:
\begin{equation}
  \label{eq:gate-collapse-p15}
  E[\hharmonic,.]-E[\gharmonic j-1,.] =
  \int_{X_j}\|\nabla\hharmonic,.-\nabla\gharmonic j-1,. \circ\proj j,j-1.\|_2^2\,d\mu_j.
\end{equation}
Therefore from~(\ref{eq:gate-collapse-p12}) we have:
\begin{equation}
  \label{eq:gate-collapse-p16}
   \sum_{k\ge 1}\sum_{l=\bar n_k+1}^{\bar n_{k+1}}\sum_{Q\in\cell
     X_{l-1}.}\mu_{l-1}(Q)\av_{\projinv l,l-1.(Q)}\|\nabla\hharmonic
   l,.-\nabla\gharmonic l-1,.\circ\proj l,l-1.\|_2^2\,d\mu_l\le(\glip f.)^2.
 \end{equation}
 \MakeStep{Step 3: Application of Lemma~\ref{lem:energy-lwbd}.}
 \par Assume that $Q\in\badset.$ and as in \texttt{Step 1} write $\dym
 n_{k+1},Q. = A\greendec \cup A\reddec\cup \gtsheet\greendec \cup
 \gtsheet\reddec$. Let $F=\hharmonic,.-\gharmonic j-1,f.\circ\proj j,j-1.$,
 which is locally Lipschitz in the interior of $A\greendec\cup
 A\reddec$ and such that $F=0$ on $\partial Q$. As $Q$ is bad:
 \begin{equation}
   \label{eq:gate-collapse-p17}
   \biggl|
   	\av_{\partial \gtsheet\greendec}f\,d\hmeas 2. - \av_{\partial \gtsheet\reddec}f\,d\hmeas 2.
   \biggr|\ge\frac{\varepsilon}{256\times n_{k+1}}\diam(Q);
 \end{equation}
 however, $\int_{\partial \gtsheet\greendec}\gharmonic j-1,.\circ\proj
 j,j-1. = \int_{\partial \gtsheet\reddec}\gharmonic j-1,.\circ\proj
 j,j-1.$ and $\hharmonic,.=f$ on $\gtsheet\greendec\cup \gtsheet\reddec$, and so at
 least one of the following two must hold
 \begin{align}
   \label{eq:gate-collapse-p18gr}
   \biggl|
   	\av_{\partial \gtsheet\greendec}F\,d\hmeas 2.\biggr|
        &\ge\frac{\varepsilon}{512\times n_{k+1}}\diam(Q)\\
        \label{eq:gate-collapse-p18rd}
\biggl|
   	\av_{\partial \gtsheet\reddec}F\,d\hmeas 2.\biggr|
        &\ge\frac{\varepsilon}{512\times n_{k+1}}\diam(Q).
      \end{align}
      Without loss of generality assume
      that~(\ref{eq:gate-collapse-p18gr}) holds and apply
      Lemma~\ref{lem:energy-lwbd} to $Q\greendec=A\greendec\cup
      \gtsheet\greendec$ with $\eta = \frac{\varepsilon}{512\times
        n_{k+1}}\diam(Q)$ and $s\simeq \diam(Q)/n_{k+1}$ ($\simeq$
      implies a uniform constant). We thus have:
      \begin{equation}
        \label{eq:gate-collapse-p19}
        \int_{A\greendec}\|\nabla F\|^2\,d\lebmeas
        3.\ge\frac{c\varepsilon^2}{n^3_{k+1}}(\diam Q)^3
      \end{equation}
      where $c$ is a universal constant independent of $\varepsilon$,
      $k$ and $l$. As $\mu_l$ is doubling and a constant multiple of
      Lebesgue measure on each cell of $X_l$, we can deflate $c$ to
  get:
  \begin{equation}
    \label{eq:gate-collapse-p20}
    \av_{\projinv l,l-1.(Q)}\|\nabla F\|_2^2\,d\mu_l\ge\frac{c\varepsilon^2}{n^3_{k+1}}.
  \end{equation}
  Plugging~(\ref{eq:gate-collapse-p20}) in~(\ref{eq:gate-collapse-p16})
  finishes the proof.
\end{proof}
\subsection{The proof of differentiability}
\label{subsec:diff-proof}
\begin{thm}[Differentiation of real-valued Lipschitz functions]
  \label{thm:real-diff}
  Let $f:X_\infty\to\real$ be Lipschitz and $\nabla f$ its horizontal
  gradient. Then $f$ is differentiabe $\mu_\infty$-a.e.~with derivative
  $\nabla f$: i.e.~for $\mu_\infty$-a.e.~$p$ one has:
  \begin{equation}
    \label{eq:real-diff-s1}
    \biglip\bigl(
    f-\langle\nabla f(p),\vec x\rangle
\bigr)(p)=0.
  \end{equation}
\end{thm}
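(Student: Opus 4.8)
The plan is to prove that for $\mu_\infty$-a.e.~$p_\infty\in X_\infty$ and every fixed $\varepsilon\in(0,1/400)$ one has
\[
\Lambda_\varepsilon(p_\infty):=\limsup_{r\searrow0}\frac1r\max_{q\in\fundsub p_\infty,,,X_\infty.}\bigl|f(q)-f(p_\infty)-\langle\nabla f(p_\infty),\vec x(q)-\vec x(p_\infty)\rangle\bigr|\le C\varepsilon\glip f.,
\]
with $C$ universal, where $\fundsub p_\infty,,,X_\infty.$ is the fundamental configuration at scale $r$ and resolution $\varepsilon$. Granting this, since $\fundsub p_\infty,,,X_\infty.$ is $(5\varepsilon r)$-dense in $\uball p_\infty,r,X_\infty.$ and $f$ and $x\mapsto\langle\nabla f(p_\infty),\vec x\rangle$ are both $\glip f.$-Lipschitz (note $\|\nabla f(p_\infty)\|_2\le\glip f.$, the horizontal gradient being the gradient of the restriction of $f$ to a chromatic sheet isometric to $[0,1]^3$), the same bound holds with the maximum over all of $\uball p_\infty,r,X_\infty.$, up to replacing $C$ by $C+10$; intersecting the resulting full-measure sets over $\varepsilon=1/m$ and letting $m\to\infty$ gives~(\ref{eq:real-diff-s1}).

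Fix $p_\infty$, $r$ and $q\in\fundsub p_\infty,,,X_\infty.$, and take the good horizontal path $\gamma$ with jumps from $p_\infty$ to $q$ of Lemma~\ref{lem:good-hz-paths}: at most $15$ horizontal segments, at most one jump $\jp=(c_1,c_2)$, $\len(\gamma)\le C\,d_{X_\infty}(p_\infty,q)$, and all but $\le10$ segments of length $\ge\varepsilon^3r/400$. By the construction of $\fundsub p_\infty,,,X_\infty.$, the two pieces of $\gamma$ on either side of the jump lie on the (at most two) chromatic sheets through $p_\infty$ and $q$, which agree on all colour labels at positions below the level $l$ of the jump, and $l\to\infty$ as $r\searrow0$. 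On each horizontal segment $\sigma$ — lying in a chromatic sheet $\chrsheet_\sigma\cong[0,1]^3$ parallel to a coordinate axis $e_{i(\sigma)}$ — the fundamental theorem of calculus for the Lipschitz function $f|_{\chrsheet_\sigma}$ gives that the increment of $f$ along $\sigma$ equals $\int_\sigma\partial_{e_{i(\sigma)}}(f|_{\chrsheet_\sigma})$; and since $\vec x\circ\gamma$ is a polygon in $[0,1]^3$ from $\vec x(p_\infty)$ to $\vec x(q)$, the matching contributions $\pm\len(\sigma)\,\partial_{e_{i(\sigma)}}f(p_\infty)$ of the linear term telescope exactly to $\langle\nabla f(p_\infty),\vec x(q)-\vec x(p_\infty)\rangle$. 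Therefore
\[
\bigl|f(q)-f(p_\infty)-\langle\nabla f(p_\infty),\vec x(q)-\vec x(p_\infty)\rangle\bigr|\le\sum_\sigma\int_\sigma\bigl|\partial_{e_{i(\sigma)}}(f|_{\chrsheet_\sigma})-\partial_{e_{i(\sigma)}}f(p_\infty)\bigr|+\bigl|f(c_2)-f(c_1)\bigr|.
\]

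The $\le10$ short segments contribute at most $\tfrac1{20}\varepsilon^3r\glip f.$ by the Lipschitz bound. For the jump term, $\bigl|f(c_2)-f(c_1)\bigr|\le\glip f.\,d_{X_\infty}(c_1,c_2)=\tfrac1{4n_{k+1}}\slen(X_{l-1})\glip f.$, which by the design of the configuration (the colour change can be realized past level $j_0=\lg(\varepsilon^2r)$, so $\slen(X_{l-1})\lesssim\varepsilon^2r$) is $\lesssim\varepsilon^2r\glip f.$; should the change have to occur near level $\lg(r)$ instead, one combines $\bigl|f(c_i)-\av_{\partial\gtsheet_i}f\,d\hmeas 2.\bigr|\lesssim\glip f.\,\diam\gtsheet_i\simeq\glip f.\,d_{X_\infty}(c_1,c_2)$ with the gate-collapse estimate of Theorem~\ref{thm:gate-collapse}, after a Borel--Cantelli argument ruling out ``bad'' cubes at every small scale. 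For a long segment $\sigma$, after transporting it through $\proj\infty,l-1.$ to the common base $\chrsheet^{(l-1)}\subset X_{l-1}$ (a move of size $\lesssim\slen(X_{l-1})$), I would split $\bigl|\partial_{e_i}(f|_{\chrsheet_\sigma})-\partial_{e_i}f(p_\infty)\bigr|$ into a \emph{horizontal} part $\bigl|\partial_{e_i}(f|_{\chrsheet_{p_\infty}})-\partial_{e_i}(f|_{\chrsheet_{p_\infty}})(p_\infty)\bigr|$ and a \emph{vertical} part $\bigl|\partial_{e_i}(f|_{\chrsheet_q})-\partial_{e_i}(f|_{\chrsheet_{p_\infty}})\bigr|$ (the latter only when $\sigma$ lies on $\chrsheet_q\ne\chrsheet_{p_\infty}$). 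The horizontal part integrates to $o(r)$ for $\mu_\infty$-a.e.~$p_\infty$: since $\mu_\infty=\int_{\chrom X_\infty.}\lebmeas 3.\on\chrsheet\,dP_\infty(\chrsheet)$, for $P_\infty$-a.e.~$\chrsheet$ at $\lebmeas 3.$-a.e.~point $f|_\chrsheet$ is approximately differentiable (a Lebesgue point of its gradient), and a Fubini argument transfers this to a full-measure set of $p_\infty$. The vertical part is where the collapse enters: using the piecewise harmonic approximations $\hharmonic j,f.$, $\gharmonic j,f.$ and the telescoping energy estimate underlying the proof of Theorem~\ref{thm:gate-collapse} (together with Lemma~\ref{lem:energy-lwbd}), one obtains a nonnegative $\mathcal E\in L^1(\mu_\infty)$ with $\int\mathcal E\,d\mu_\infty\le C\glip f.^2$ that dominates the tail $\sum_{j\ge l}$ of the per-level ``diamond oscillations'' of $\nabla f$; since $\mathcal E<\infty$ $\mu_\infty$-a.e., these tails — and hence, by Cauchy--Schwarz over the boundedly many cells met by $\gamma$, the vertical part — vanish as $r\searrow0$. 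Adding the three contributions and letting $\varepsilon\searrow0$ gives~(\ref{eq:real-diff-s1}).

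The main obstacle is the vertical part: making precise, and quantitatively small at a generic point and at \emph{every} small scale, the statement that $f$ asymptotically fails to distinguish the two chromatic sheets — that is, converting the global energy/measure bound of Theorem~\ref{thm:gate-collapse}, whose weight $\varepsilon^2/n_{k+1}^3$ must be tracked with care (which is why a ``tail-recursive'' $L^1$ formulation, rather than a crude Borel--Cantelli over scales, is needed), into pointwise control of the horizontal gradient along the paths. Bookkeeping the finitely many cells and chromatic sheets visited by $\gamma$, and reconciling the harmonic approximations $\gharmonic j,f.$ with $f$ itself on those cells, are the accompanying technical points.
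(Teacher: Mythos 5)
Your route is genuinely different from the paper's. You attempt to bound the error directly via the fundamental theorem of calculus along the paths of Lemma~\ref{lem:good-hz-paths}, splitting the gradient oscillation into a horizontal part (approximate differentiability on $p$'s sheet) and a vertical part (to be controlled by the energy of Theorem~\ref{thm:gate-collapse}), plus the jump term. The paper instead first passes to $F=f-\langle c,\vec x\rangle$ on a positive-measure set where $\|\nabla F\|_2\le\varepsilon$, so every horizontal-segment increment is automatically $\lesssim\varepsilon\cdot\len$, and then does a three-way case analysis on the jump: no jump (Step~3); jump on a bad cube (Step~4, where a density argument with the neighborhood set $X\baddec$ forces $d(c_1,c_2)\le\varepsilon\,d(p,q)$); jump on a good cube (Step~5, where the Intermediate Value Theorem produces $z\greendec\in\partial\gtsheet\greendec$, $z\reddec\in\partial\gtsheet\reddec$ realizing $\av_{\partial\gtsheet\greendec}F\,d\hmeas 2.$ and $\av_{\partial\gtsheet\reddec}F\,d\hmeas 2.$, and the paths from $c_i$ to $z_i$ are controlled by the Step-3 small-gradient estimate, not a crude Lipschitz bound).

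The jump term is where your proposal has a genuine gap. The claim $\slen(X_{l-1})\lesssim\varepsilon^2 r$ is false: the fundamental configuration mixes sheets differing from $p$'s at every position $\le j_0=\lg(\varepsilon^2 r)$, so the forced jump can occur at any level $l$ with $\slen(X_l)\lesssim r$, i.e.~$l\gtrsim\lg(r)$, and $d(c_1,c_2)\simeq\slen(X_l)$ can be comparable to $r$. Your fallback, $|f(c_i)-\av_{\partial\gtsheet\greendec}f\,d\hmeas 2.|\lesssim\glip f.\,d(c_1,c_2)$ combined with the gate-collapse estimate, still only yields $|f(c_1)-f(c_2)|\lesssim\glip f.\,d(c_1,c_2)\simeq\glip f.\,r$, since the two terms $|f(c_i)-\av_{\partial\gtsheet\greendec}f\,d\hmeas 2.|$ are themselves of the order you need to beat. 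This is exactly what the IVT is for, and also why the constant-derivative reduction matters: the paper reconnects $c_i$ to $z_i$ by a short horizontal path and applies the Step-3 estimate to $F$ along it, getting $\lesssim\varepsilon\,d(c_1,c_2)$; doing this for $f$ directly, as you sketch, needs the very horizontal/vertical control you are still building, so the argument is circular at that point. Two further issues: ``Borel--Cantelli ruling out bad cubes at every small scale'' is too strong---bad cubes do occur at arbitrarily small scales near generic points, and Step~2's $X\baddec$-avoidance only ensures that when $\gamma$ jumps at a bad cube, $p$ is far from that gate, forcing the jump to be small relative to $d(p,q)$. And the ``vertical part'' is the right instinct but not carried out: the telescoping energy bounds $\|\nabla\hharmonic,.-\nabla(\gharmonic j-1,.\circ\proj j,j-1.)\|_2$, a difference of successive piecewise harmonic approximations, not a difference of $\nabla f$ restricted to two chromatic sheets, and the step connecting the two is not supplied.
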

\begin{proof}
  \MakeStep{Step 1: Reduction to a constant derivative.}
  Fix $\varepsilon>0$ and write
  $X_\infty=\Omega\cup\bigcup_{t=1}^\infty K_t$ where:
  \begin{enumerate}
  \item $\mu_\infty(\Omega)=0$.
  \item Each $K_t$ is compact with $\mu_\infty(K_t)>0$ and there is a
    $c_t\in\real^3$ such that:
    \begin{equation}
      \label{eq:real-diff-p1}
      \sup_{p\in K_t}\|\nabla f(p)-c_t\|_2\le\varepsilon.
    \end{equation}
  \end{enumerate}
 Fix one index $t$ and drop it from the notation; let $F=
    f-\langle c,\vec x\rangle$, which is $(5\glip
    f.)$-Lipschitz. Assume that we are able to find a universal $C>0$
    independent of $K=K_t$ and $\varepsilon$ such that whenever
    $S\subset K$ is compact with $\mu_\infty(S)>0$ one can find
    $\tilde S\subset S$ with $\mu_\infty(\tilde S)>0$ and
    \begin{equation}
      \label{eq:real-diff-p2}
      \biglip F\le C\varepsilon\quad\text{on $\tilde S$.}
    \end{equation}
Then an exhaustion argument and letting $\varepsilon\searrow0$ will
yield~(\ref{eq:real-diff-s1}).
\MakeStep{Step 2: Avoiding bad gates.}
\par Fix $S\subset K$ with $\mu_\infty(S)>0$, our goal being to
prove~(\ref{eq:real-diff-p2}), which is to be accomplished in
\texttt{Step 6}. Given $Q\in\todoub X_{l-1}.$ let $\gtsheet\greendec(Q)$,
$\gtsheet\reddec(Q)$be the corresponding pair of gates and $G_\varepsilon(Q)$
denote the $\frac{600\slen(\gtsheet\greendec(Q))}{\varepsilon}$-neighborhood
of $\gtsheet\greendec(Q)\cup \gtsheet\reddec(Q)$. By Theorem~\ref{thm:gate-collapse}
we have:
\begin{equation}
  \label{eq:real-diff-p3}
  \sum_{k\ge 1}\sum_{l=\bar n_k+1}^{\bar n_{k+1}}\sum_{Q\in\badset
    X_{l-1}.}\mu_l(\gtsheet\greendec(Q)\cup \gtsheet\reddec(Q))<\infty;
\end{equation}
as the $\mu_l$ are uniformly doubling (Lemma~\ref{lem:ver-doubling})
we also have:
\begin{equation}
  \label{eq:real-diff-p4}
  \sum_{k\ge 1}\sum_{l=\bar n_k+1}^{\bar n_{k+1}}\sum_{Q\in\badset X_{l-1}.}\mu_l(G_\varepsilon(Q))<\infty;
\end{equation}
in particular, we can find a $k_0=k_0(F,S,\varepsilon)$ such that:
\begin{equation}
  \label{eq:real-diff-p5}
  \sum_{k\ge k_0}\sum_{l=\bar n_k+1}^{\bar n_{k+1}}\sum_{Q\in\badset X_{l-1}.}\mu_l(G_\varepsilon(Q))\le\frac{\mu_\infty(S)}{5}.
\end{equation}
Let:
\begin{equation}
  \label{eq:real-diff-p6}
  X\baddec = \bigcup_{k\ge k_0}\bigcup_{l=\bar n_k+1}^{\bar
    n_{k+1}}\bigcup_{Q\in\badset X_{l-1}.}\projinv\infty,l.(G_\varepsilon(Q))
\end{equation}
and note that $\mu_\infty(S\setminus X\baddec)>0$; thus let $\tilde
S\subset S$ consist of those Lebesgue density points of $S\setminus
X\baddec$ which are also approximate continuity points of $\nabla f$
and hence of $\nabla F$. 
\par Pick $p\in\tilde S$ and for $r>0$ let $\fund p,,.$ be a
fundamental configuration at $p$ at scale $r$ and resolution
$\varepsilon$. To each $q\in\fund p,,.$ associate a horizontal path
(possibly with one jump) $\gamma_q$ as in
Lemma~\ref{lem:good-hz-paths}. We say that $q$ is \textbf{bad} if:
\begin{enumerate}
\item $\gamma_q$ is of the form $\jph=(\ph_{-},\jp,\ph_+)$ with
  $\gates\jp.=\gtsheet\greendec(Q)\cup \gtsheet\reddec(Q)$ for $Q\in\badset
  X_{l-1}.$ where $l\in\{\bar n_k+1,\cdots,\bar n_{k+1}\}$.
\item Letting $\jp = (c_1,c_2)$ either $k<k_0$ or
  $d_{X_\infty}(c_1,c_2)>\varepsilon d_{X_\infty}(p,q)$.
\end{enumerate}
We now argue that we can find $r_0=r_0(\varepsilon)>0$ such that if
$r\le r_0$ then there is \emph{no} such bad $q$. For one thing:
\begin{equation}
  \label{eq:real-diff-p7}
  d_{X_\infty}(p,q)\ge d_{X_\infty}(c_1,c_2)\simeq\frac{\slen(X_{l-1})}{n_{k+1}},
\end{equation}
thus choosing $r_0$ sufficiently small we can guarantee $k\ge
k_0$. Secondly,
\begin{equation}
  \label{eq:real-diff-p8}
  d_{X_\infty}(c_1,c_2)>\varepsilon d_{X_\infty}(p,q)
\end{equation}
would imply $p\in G_\varepsilon(Q)$ contradicting the definition of
$\tilde S$. In the following let $r\le r_0$, pick any $q\in\fund p,,.$
and let $\gamma=\gamma_q$.
\MakeStep{Step 3: $\gamma$ is a horizontal path.}
\par Let $\gamma=(\sigma_1,\sigma_2,\cdots,\sigma_s)$ and let
$\dom\sigma_i$ denote the domain of $\sigma_i$; let $\sigma_i(\cpend)$
and $\sigma_i(\cpstart)$ denote the end and the starting point of
$\sigma_i$. 
\par As $p$ is an approximate continuity point of $\nabla F$, we can
find $r_1=r_1(\varepsilon)\le r_0$ such that for each $\sigma_i$ with
$\len(\sigma_i)\ge \varepsilon^3r/400$ there is another horizontal
segment $\tilde\sigma_i$ satisfying:
\begin{enumerate}
\item $\proj\infty,0.\circ\tilde\sigma_i$ and
  $\proj\infty,0.\circ\sigma_i$ are parallel to the same axis.
\item $\tilde\sigma_i$ has the same domain as $\sigma_i$ and:
  \begin{equation}
    \label{eq:real-diff-p9}
    \begin{aligned}
      d_{X_\infty}(\tilde\sigma_i(\cpstart),\sigma_i(\cpstart)) &\le
      3\varepsilon^3r\\
      d_{X_\infty}(\tilde\sigma_i(\cpend),\sigma_i(\cpend)) &\le
      3\varepsilon^3r\\
      \len(\tilde\sigma_i)&=\len(\sigma_i).
    \end{aligned}
  \end{equation}
\item $\int_{\dom\tilde\sigma_i}\|\nabla
  F\|_2\circ\tilde\sigma_i\,d\lebmeas 1.\le 2\varepsilon\len(\tilde\sigma_i)$.
\end{enumerate}
Then:
\begin{equation}
  \label{eq:real-diff-p10}
  \begin{split}
    |F(p) - F(q)| &\le \sum_{i=1}^s\bigl|
    	F(\sigma_i(\cpend)) - F(\sigma_i(\cpstart))
    \bigr| \\ 
    &\le
    \sum_{\sigma_i:\len(\sigma_i)\ge\frac{\varepsilon^3r}{400}}\bigl\{
    \bigl|
    	F(\sigma_i(\cpend)) - F(\tilde\sigma_i(\cpend))
    \bigr| +
    \bigl|
    	F(\tilde\sigma_i(\cpend)) - F(\tilde\sigma_i(\cpstart))
    \bigr| \\    &\mskip\munsplit +
    \bigl|
    	F(\sigma_i(\tilde\cpstart)) - F(\sigma_i(\cpstart))
    \bigr|   
    \bigr\} +
    \sum_{\sigma_i:\len(\sigma_i)<\frac{\varepsilon^3r}{400}} 
    \bigl|
    	F(\sigma_i(\cpstart)) - F(\sigma_i(\cpend))
    \bigr|\\
    &\le (5\glip f.)\times(6\varepsilon^3r)\times\#\bigl\{
    	\sigma_i:\len(\sigma_i)\ge\frac{\varepsilon^3r}{400}
    \bigr\}\\ &\mskip\munsplit +
    \sum_{\sigma_i:\len(\sigma_i)\ge\frac{\varepsilon^3r}{400}}\int_{\dom\tilde\sigma_i}\|\nabla
    F\|_2\circ \tilde\sigma_i\,d\lebmeas 1. + 
    \frac{\varepsilon^3r}{400}\times (5\glip f.)\times \#\bigl\{
    	\sigma_i:\len(\sigma_i)<\frac{\varepsilon^3r}{400}
       \bigr\}\\
      &\le 40\varepsilon^3r\times\glip f. + 2C\varepsilon d_{X_\infty}(p,q),
  \end{split}
\end{equation}
where $C$ is the constant from \textbf{(Gd3)} in
Lemma~\ref{lem:good-hz-paths}; as $d_{X_\infty}(p,q)\ge\varepsilon^2r$
we finally get:
\begin{equation}
  \label{eq:real-diff-p11}
  |F(p)-F(q)| \le (40\times\glip f. + 2C)\varepsilon
  d_{X_\infty}(p,q).
\end{equation}
\MakeStep{Step 4: $\gamma$ has a bad jump.}
\par Let $\gamma=(\ph_{-},\jp,\ph_{+})$ where
$\ph_{-}=(\sigma_1,\cdots,\sigma_{s})$,
$\ph_{+}=(\tau_1,\cdots,\tau_t)$ and $\gates\jp.=\gtsheet\greendec(Q)\cup
\gtsheet\reddec(Q)$ for $Q\in\badset X_{l-1}.$. Let $\jp=(c_1,c_2)$; on
$\ph_{-}$ and $\ph_{+}$ we can estimate as in \texttt{Step 3} while by
\texttt{Step 2} we get:
\begin{equation}
  \label{eq:real-diff-p12}
  d_{X_\infty}(c_1,c_2)\le\varepsilon d_{X_\infty}(p,q)
\end{equation}
as $q$ cannot be bad for $r\le r_0$. Thus:
\begin{equation}
  \label{eq:real-diff-p13}
  |F(p)-F(q)|\le (2C + 45\glip f.)\varepsilon d_{X_\infty}(p,q).
\end{equation}
\MakeStep{Step 5: $\gamma$ has a good jump.}
\par In this case $\jp=(c\greendec,c\reddec)$ and:
\begin{equation}
  \label{eq:real-diff-p14}
  \biggl|
	\av_{\partial \gtsheet\greendec(Q)}F\,d\hmeas 2. - 
        \av_{\partial \gtsheet\reddec(Q)}F\,d\hmeas 2.
  \biggr| \le\varepsilon d_{X_\infty}(c\greendec,c\reddec);
\end{equation}
without loss of generality we may assume that $\ph_{-}$ ends at
$c\greendec$ and $\ph_{+}$ starts at $c\reddec$; moreover, recall that
$d_{X_\infty}(c\greendec, c\reddec)\le d_{X_\infty}(p,q)$. As $F$ is
real-valued and continuous, we can find by the Intermediate Value
Theorem (this is essentially the point were this argument breaks down
for $l^2$-valued maps) $z\greendec\in\partial \gtsheet\greendec(Q)$ and
$z\reddec\in\partial \gtsheet\reddec(Q)$ such that:
\begin{equation}
  \label{eq:real-diff-p15}
  \begin{aligned}
    \av_{\partial \gtsheet\greendec(Q)}F\,d\hmeas 2. &= z\greendec,\\
    \av_{\partial \gtsheet\reddec(Q)}F\,d\hmeas 2. &= z\reddec.
  \end{aligned}
\end{equation}
Now as in \texttt{Step 1} of Lemma~\ref{lem:good-hz-paths} we may find
horizontal paths $\ph\greendec$ and $\ph\reddec$ such that:
\begin{enumerate}
\item $\ph\greendec$ joins $c\greendec$ to $z\greendec$, $\ph\reddec$
  joins $z\reddec$ to $c\reddec$.
\item $\len(\ph\greendec) + \len(\ph\reddec)\le 64
  d_{X_\infty}(c\greendec,c\reddec)$.
\item $\ph\greendec\cup\ph\reddec$ contains at most $6$ horizontal
  segments and, trivially, at most $6$ of them can have length $\le\varepsilon^3r/10$.
\end{enumerate}
We can then estimate:
\begin{equation}
  \label{eq:real-diff-p16}
  \begin{split}
    |F(p)-F(q)| &\le |F(p)-F(c\greendec)| +
    |F(c\greendec)-F(z\greendec)|\\
&\mskip\munsplit+|F(z\greendec)-F(z\reddec)| +
|F(z\reddec)-F(c\reddec)|\\
&\mskip\munsplit+|F(c\reddec)-F(q)|.
  \end{split}
\end{equation}
On $|F(p)-F(c\greendec)|$, $|F(c\greendec)-F(z\greendec)|$,
$|F(z\reddec)-F(c\reddec)|$, $|F(c\reddec)-F(q)|$ we apply the
argument of \texttt{Step 3} to $\ph_{-}$, $\ph\greendec$, $\ph\reddec$
and $\ph_{+}$. For $|F(z\greendec)-F(z\reddec)|$ as the jump is not
bad, i.e.~(\ref{eq:real-diff-p14}):
\begin{equation}
  \label{eq:real-diff-p17}
  |F(z\greendec) - F(z\reddec)|\le\varepsilon d_{X_\infty}(p,q).
\end{equation}
Thus:
\begin{equation}
  \label{eq:real-diff-p18}
  |F(p)-F(q)|\le(800\glip f. + 2C + 8)\varepsilon d_{X_\infty}(p,q).
\end{equation}
\MakeStep{Step 6: The proof of~(\ref{eq:real-diff-p2}).}
\par By \texttt{Steps 3--5} (i.e.~by (\ref{eq:real-diff-p11}),
(\ref{eq:real-diff-p13}) and~(\ref{eq:real-diff-p18})) we have:
\begin{equation}
  \label{eq:real-diff-p19}
  \sup_{q\in\fund p,,.}|F(p)-F(q)|\le (800\glip f. + 2C +
  8)\varepsilon d_{X_\infty}(p,q). 
\end{equation}
Let $\tilde q\in\uball X_\infty,p,r.$ and
  find $q\in\fund p,,.$ with $d_{X_\infty}(q,\tilde q)\le 5\varepsilon
  r$. Then:
  \begin{equation}
    \label{eq:real-diff-p20}
    |F(p)-F(\tilde q)| \le (800\glip f. + 2C+8)\varepsilon
    d_{X_\infty}(p,q) + 25\varepsilon\glip f.r;
  \end{equation}
  thus~(\ref{eq:real-diff-p2}) holds for a universal constant $C$
  independent of $\varepsilon$ and $S$.
\end{proof}
\section{Differentiability of Hilbert-valued Lipschitz maps}
\label{sec:hilb-diff}
\MyOutline{
\begin{enumerate}
\item Modification 1: Energy estimate: take deviation for paired
  points on a small
  corona around the ball where you can get difference between
  comparable balls (might be very small given we lose control on the
  Lipschitz constant)
\item Modification 2: recenter the ball using a biLipschitz movement
  (constant vector + cut-off function)
\end{enumerate}
}
For an $l^2$-valued Lipschitz $f:X_\infty\to l^2$ the argument is more
technical. First recall that $l^2$ has the Radon-Nikodym property: any
Lipschitz $G:\real^n\to l^2$ is differentiabile $\lebmeas n.$-a.e.,
here $n$ being arbitrary. Thus as in Definition~\ref{defn:hz-grad} we
can construct the horizontal gradient of $\nabla f$. As in
Definition~\ref{defn:harmonic-appx} and minding the discussion on
$l^2$-valued harmonic functions in~\ref{defn:harmonic-funs} we can
construct the piecewise harmonic approximations $\hharmonic,.$ and
$\gharmonic,.$. Now the results of Lemma~\ref{lem:harmonic-reg} extend to
this setting. For the boundary regularity we approximate the boundary
conditions with ones that take value in finite-dimensional subspaces
of $l^2$ and use that if $\phi$ is $l^2$-valued and harmonic, then
$\|\phi\|_2^2$ is subharmonic and so we can apply the maximum
principle. For the energy estimates~(\ref{eq:harmonic-appx-3}) we use the
Kirszbraum extension theorem on the cells of $X_j$.
\par However, \texttt{Step 5} in
Theorem~\ref{thm:real-diff} breaks down as we cannot apply the
Intermediate Value Theorem to $f$. To fix it we need to change the
definition of a \emph{bad} cube in Theorem~\ref{thm:gate-collapse}. This
leads to the Theorem~\ref{thm:l2-gate-collapse} which is proved in the
same way (hence the proof is omitted) provided one has a suitable
lower bound on the energy as in Lemma~\ref{lem:l2-energy-lwbd}. The
proof of this Lemma is a bit more technical than the
one~\ref{lem:energy-lwbd}; moreover we get a worse dependence on
$\eta$ and we will have to change the exponent of $\varepsilon$ from
$2$ (in Theorem~\ref{thm:gate-collapse}) to a $4$ (in
Theorem~\ref{thm:l2-gate-collapse}). 
Finally we are able to prove
Theorem~\ref{thm:l2-diff}, where we have just to fix \texttt{Step 5} in
Theorem~\ref{thm:real-diff}.
\begin{lem}[Lower bound on the energy]
  \label{lem:l2-energy-lwbd}
  Let $Q$ be a cube with sidelength $\slen(Q)$ and for
  $s\in(0,\frac{\slen(Q)}{6}]$ let $sQ$ denote the cube with the same
  center as $Q$ and with sidelength $s$. Assume that
  $F:\overline{Q\setminus sQ}\to l^2$ is continuous and locally
  Lispchitz in $Q\setminus sQ$; assume also that the restriction
  $F|\partial(sQ)$ is Lipschitz, that $F=0$ on $\partial Q$ and that there is a $p\in\partial(sQ)$
  such that:
  \begin{equation}
    \label{eq:l2-energy-lwbd-s1}
    \hilbnrm F(p).\ge\eta s.
  \end{equation}
  Then there is a universal constant $c\hardec$ that depends only on
  the Lipschitz constant of $F|\partial (sQ)$ such that
  \begin{equation}
    \label{eq:l2-energy-lwbd-s2}
    \int_{Q\setminus sQ}\hilbnrm \nabla F.^2\,d\lebmeas 3.\ge c\hardec\eta^4 s^3.
  \end{equation}
\end{lem}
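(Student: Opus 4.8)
The plan is to follow the scheme of Lemma~\ref{lem:energy-lwbd}, but since $F$ is $l^2$-valued there is no scalar symmetrization and no comparison with a single radial harmonic function available; instead I will combine a spherical--cap estimate on the inner sphere with a radial Cauchy--Schwarz inequality along the rays meeting that cap, and this is precisely where the extra power of $\eta$ is lost. First I reduce to an annulus of balls exactly as in Step~1 of Lemma~\ref{lem:energy-lwbd}: applying the $(1/16,16)$-bi-Lipschitz map $\Psi$ I may assume that $F:\clball 0,R.\setminus\ball 0,\rho.\to l^2$ with $R=\slen(Q)/2$, $\rho=s/2$, that $F=0$ on $\partial\ball 0,R.$, that $F|\partial\ball 0,\rho.$ is Lipschitz of some constant $L'\le 16L$ (here $L$ denotes the Lipschitz constant of the original $F|\partial(sQ)$), and that $\|F(p)\|_2\ge 2\eta\rho$ for some $p\in\partial\ball 0,\rho.$; the energies are comparable up to a universal factor. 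I may also assume $\int\|\nabla F\|_2^2\,d\lebmeas 3.<\infty$, as otherwise there is nothing to prove, and --- this being the only regime that occurs in the applications --- that $\eta\le L$.

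Next I use the Lipschitz bound on the inner sphere to promote the single large value at $p$ to a set of large values: if $x\in\partial\ball 0,\rho.$ and $|x-p|\le\eta\rho/L'$ then $\|F(x)\|_2\ge\|F(p)\|_2-L'|x-p|\ge\eta\rho$. Hence $\Sigma:=\{x\in\partial\ball 0,\rho.:\|F(x)\|_2\ge\eta\rho\}$ contains the spherical cap $\partial\ball 0,\rho.\cap\ball p,\eta\rho/L'.$, and since $\eta\rho/L'\le\rho$ this cap has $\hmeas 2.(\Sigma)\gtrsim(\eta\rho/L')^2$. Writing $\Sigma':=\Sigma/\rho\subset\sphere^2$ for the corresponding subset of the unit sphere, $\hmeas 2.(\Sigma')\gtrsim\eta^2/(L')^2$.

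Now the radial estimate. Since $\int\|\nabla F\|_2^2\,d\lebmeas 3.<\infty$, polar coordinates give, for $\hmeas 2.$-a.e.\ $\omega\in\sphere^2$, that $\int_\rho^R\|\partial_rF(t\omega)\|_2^2\,t^2\,dt<\infty$; as $t\ge\rho>0$ while $t\mapsto F(t\omega)$ is locally Lipschitz on $(\rho,R)$ and continuous on $[\rho,R]$, this curve is then absolutely continuous on $[\rho,R]$. For such $\omega\in\Sigma'$ the equalities $F(R\omega)=0$ and $\|F(\rho\omega)\|_2\ge\eta\rho$ and the fundamental theorem of calculus give $\int_\rho^R\|\partial_rF(t\omega)\|_2\,dt\ge\eta\rho$, and Cauchy--Schwarz with the weight $t^2$ (using $\int_\rho^R t^{-2}\,dt\le 1/\rho$) gives $\int_\rho^R\|\partial_rF(t\omega)\|_2^2\,t^2\,dt\ge\eta^2\rho^3$. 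Passing to polar coordinates and using $\|\nabla F\|_2^2\ge\|\partial_rF\|_2^2$,
\[
\int_{\ball 0,R.\setminus\ball 0,\rho.}\|\nabla F\|_2^2\,d\lebmeas 3.\ \ge\ \int_{\Sigma'}\int_\rho^R\|\partial_rF(t\omega)\|_2^2\,t^2\,dt\,d\hmeas 2.(\omega)\ \ge\ \hmeas 2.(\Sigma')\,\eta^2\rho^3\ \gtrsim\ \frac{\eta^4\rho^3}{(L')^2}\ \gtrsim\ \frac{\eta^4 s^3}{L^2}.
\]
Undoing the reduction of the first paragraph gives the claim with $c\hardec\simeq 1/L^2$, which depends only on $L$.

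The main obstacle is conceptual rather than computational: a single large value $\|F(p)\|_2\ge\eta s$ gives no control on an average over $\partial(sQ)$, so the scalar argument of Lemma~\ref{lem:energy-lwbd} cannot be transcribed. The Lipschitz hypothesis on $F|\partial(sQ)$ is exactly what is needed to turn the pointwise bound into a cap of size $\gtrsim(\eta s/L)^2$, and it is this cap factor, together with the $\eta^2$ coming from the radial Cauchy--Schwarz, that yields the weaker exponent $\eta^4$ and forces $c\hardec$ to depend on $L$. A minor point, harmless in the applications where $\eta$ is tiny, is the normalization $\eta\le L$: if $\eta$ is comparable to or larger than $L$ then the cap already exhausts the inner sphere and the same computation only delivers $\gtrsim\eta^2 s^3$, so the stated inequality should be read under that normalization.
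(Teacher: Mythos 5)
Your proof is correct, and it takes a genuinely different (and more elementary) route at the key radial estimate. The common ground with the paper is Step~1, the reduction to concentric balls via the bi-Lipschitz map $\Psi$, and the observation that the Lipschitz hypothesis on $F|\partial(sQ)$ upgrades the single large value at $p$ to a spherical cap of angular radius $\gtrsim\eta/L$ on which $\hilbnrm F.$ stays comparably large; both arguments extract a factor $\eta^2$ from the $\hmeas 2.$-measure of this cap. Where you diverge is what you do with the cap. The paper constructs a smooth probability weight $\varphi_\eta$ concentrated on the cap with $\|\varphi_\eta\|_\infty\lesssim\eta^{-2}$, forms the weighted radial average $\tilde F(r)=\int_{\sphere^2}F(r,\omega)\varphi_\eta(\omega)\,d\hmeas 2.(\omega)$, shows by Jensen's inequality that $E[\tilde F]\lesssim\eta^{-2}E[F]$, and then lower-bounds $E[\tilde F]$ by comparison with the radial harmonic profile $A/r+B$ exactly as in Step~3 of Lemma~\ref{lem:energy-lwbd}. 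You instead apply a $t^2$-weighted Cauchy--Schwarz along each individual ray $t\mapsto F(t\omega)$, $\omega\in\Sigma'$, and integrate over the cap; this avoids both the weight construction and the harmonic comparison, and it makes the source of the two $\eta^2$ factors and of the $L$-dependence completely explicit. The technical point you flag (local absolute continuity plus $\int_\rho^R\|\partial_r F\|^2\,t^2\,dt<\infty$ with $t\ge\rho>0$ gives absolute continuity on the closed ray, so the fundamental theorem of calculus applies up to $\partial(sQ)$) is handled correctly. Your closing caveat that the stated bound should be read under the normalization $\eta\lesssim L$ is fair and applies equally to the paper's argument, where $\varphi_\eta$ can only be a probability density with $\|\varphi_\eta\|_\infty\lesssim\eta^{-2}$ if the cap has measure $\gtrsim\eta^2$; in the application ($\eta\simeq\varepsilon$, $L\lesssim\glip f.$) this is automatic.
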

\begin{proof}
  As in \texttt{Step 1} of Lemma~\ref{lem:energy-lwbd} we reduce to
  the case of concentric balls where $F:\ball
  0,\frac{\slen(Q)}{2}.\setminus \ball 0,\frac{s}{2}.\to l^2$, that
  $F=0$ for $r=\frac{\slen(Q)}{2}$ and that for some $p\in\partial\ball
  0,\frac{s}{2}.$ one has $\hilbnrm F(p).\ge\eta s$.
  \MakeStep{Step 1: Weighted symmetrization.}
  Now write $p=\frac{s}{2}\omega_0$ where $\omega_0\in\sphere^2$;
  using that $F|\partial\ball 0,\frac{s}{2}.$ is Lipschitz we can find
  a $c>0$ depending only on the Lipschitz constant of $F|\partial\ball
  0,\frac{s}{2}.$ such that  if $\|\omega-\omega_0\|_2\le c\eta$ then:
  \begin{equation}
    \label{eq:l2-energy-lwbd-p1}
    \hilbnrm F\biggl(\frac{s}{2}\omega\biggr).\ge\frac{\eta s}{2}.
  \end{equation}
\par Let $\varphi_\eta$ be a smooth probability distribution on
$\sphere^2$ supported in $\uball \omega_0,c\eta,\sphere^2.$ and such
that:
\begin{equation}
  \label{eq:l2-energy-lwbd-p2}
  \varphi_\eta\le1000\frac{ c^{-2}\eta^{-2} }{\hmeas 2.(\sphere^2)};
\end{equation}
define\begin{equation}
    \label{eq:l2-energy-lwbd-p3}
    \tilde F(r,\omega)=\tilde F(r) =
    \int_{\sphere^2}F(r,\tilde\omega)\varphi_\eta(\tilde\omega)\,d\hmeas 2.(\tilde\omega);
  \end{equation}
  as $F$ is locally Lipschitz in $\ball 0,\frac{\slen(Q)}{2}.\setminus
  \ball 0, \frac{s}{2}.$ so is $\tilde F$. Moreover $\hilbnrm\tilde
  F(s/2).\ge\eta s/2$. We now have:
  \begin{equation}
    \label{eq:l2-energy-lwbd-p4}
    \begin{split}
      \int_{\ball 0,\frac{\slen(Q)}{2}.\setminus\ball 0,
        \frac{s}{2}.}\|\nabla\tilde F(r,\omega)\|_2^2\,r^2dr d\omega
      &= \int_{s/2}^{\slen(Q)/2}r^2\,dr\int_{\sphere^2}d\omega\hilbnrm{
      			\int_{\sphere^2}\partial_rF(r,\tilde\omega)\varphi_\eta(\tilde\omega)\,d\tilde\omega
      		}.^2\\
      &\le\int_{s/2}^{\slen(Q)/2}r^2\,dr\int_{\sphere^2}d\omega\int_{\sphere^2}\hilbnrm{
      	\partial_r F(r,\tilde\omega)
        }.^2\varphi_\eta(\tilde\omega)\,d\tilde\omega\\
      &\le 1000 c^{-2}\eta^{-2}\int_{\ball 0,\frac{\slen(Q)}{2}.\setminus\ball 0,
        \frac{s}{2}.}\hilbnrm{\nabla F(r,\omega)}.^2\,r^2dr d\omega.
    \end{split}
  \end{equation}
The proof is now completed as in \texttt{Step 3} of Lemma~\ref{lem:energy-lwbd}.
\end{proof}
The following Theorem is proven like Theorem~\ref{thm:gate-collapse}
using Lemma~\ref{lem:l2-energy-lwbd} instead of Lemma~\ref{lem:energy-lwbd}.
\begin{thm}[Rate of collapse of the gates]
  \label{thm:l2-gate-collapse}
  Let $f:X_\infty\to l^2$ be Lipschitz and $Q\in\todoub X_{l-1}.$
  where $\bar n_{k}+1\le l\le \bar n_{k+1}$, and let $\gtsheet\greendec$,
  $\gtsheet\reddec$ be the two gates of $\dym n_{k+1},Q.$. Having fixed
  $\varepsilon>0$ we say that $Q$ is \textbf{bad} and write
  $Q\in\badset.$ if for a pair $y\greendec\in\partial\gtsheet\greendec$,
  $y\reddec\in\partial\gtsheet\reddec$ with $\proj
  l,l-1.(y\greendec)=\proj l,l-1.(y\reddec)$ one has:
  \begin{equation}
    \label{eq:l2-gate-collapse-s1}
    \hilbnrm f(y\greendec) - f(y\reddec).
    	  \ge \frac{\varepsilon}{256\times n_{k+1}}\diam(Q).
  \end{equation}
  Then there is a univeral constant $C$, which depends only on the
  Lipschitz constant of $f$ but not 
  $\varepsilon$, such that we have the following estimate on the
  measure of the bad cubes:
  \begin{equation}
    \label{eq:l2-gate-collapse-s2}
    \sum_{k\ge 1}\sum_{l=\bar n_k+1}^{\bar
      n_{k+1}}\sum_{Q\in\badset.}\frac{\varepsilon^4}{n^3_{k+1}}\mu_{l-1}(Q)
    \le C\times(\glip f.)^2.
  \end{equation}
\end{thm}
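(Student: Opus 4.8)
The plan is to run the proof of Theorem~\ref{thm:gate-collapse} with only two changes: the preliminary facts about the piecewise harmonic approximations must be checked in the $l^2$-valued setting, and in the last step Lemma~\ref{lem:l2-energy-lwbd} replaces Lemma~\ref{lem:energy-lwbd}. First I would set up $\hharmonic j,f.$ and $\gharmonic j,f.$ exactly as in Definition~\ref{defn:harmonic-appx}, solving the Dirichlet problem component-by-component; continuity up to the boundary and membership in $W^{1,2}(X_j,\mu_j;l^2)$ follow from Lemma~\ref{lem:harmonic-reg} after truncating the boundary data to finite-dimensional subspaces and using that $\|\phi\|_2^2$ is subharmonic for $l^2$-valued harmonic $\phi$ (so the maximum principle keeps everything $l^2$-valued and yields the uniform modulus of continuity), and the energy bound~(\ref{eq:harmonic-appx-3}) follows from the Kirszbraun extension theorem on the cells of $X_j$ in place of McShane's. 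With this in hand, \texttt{Step 1} (the two orthogonality relations) and \texttt{Step 2} (the telescoping of the energies) of Theorem~\ref{thm:gate-collapse} carry over verbatim: the integration-by-parts identities, and the smoothing of the harmonic pieces by harmonic functions with $1$-Lipschitz finite-dimensional boundary data, work coordinatewise, and weak $L^2$-convergence of the approximating gradients is preserved in $L^2(\cdot;l^2)$. One is then left with
\begin{equation*}
  \sum_{k\ge 1}\sum_{l=\bar n_k+1}^{\bar n_{k+1}}\sum_{Q\in\cell X_{l-1}.}\mu_{l-1}(Q)\av_{\projinv l,l-1.(Q)}\hilbnrm\nabla\hharmonic l,.-\nabla\gharmonic l-1,.\circ\proj l,l-1..^2\,d\mu_l\le(\glip f.)^2 .
\end{equation*}

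Next I would carry out the analogue of \texttt{Step 3}. Fix $Q\in\badset.$ with $\bar n_k+1\le l\le\bar n_{k+1}$, write $\dym n_{k+1},Q.=A\greendec\cup A\reddec\cup\gtsheet\greendec\cup\gtsheet\reddec$ and set $F=\hharmonic l,.-\gharmonic l-1,f.\circ\proj l,l-1.$; then $F=0$ on $\partial Q$, $F$ is continuous on $\overline{A\greendec}$ and locally Lipschitz in its interior, and $\hharmonic l,.=f$ on $\partial\gtsheet\greendec\cup\partial\gtsheet\reddec$. Let $y\greendec\in\partial\gtsheet\greendec$, $y\reddec\in\partial\gtsheet\reddec$ with $\proj l,l-1.(y\greendec)=\proj l,l-1.(y\reddec)$ witness the badness of $Q$; since $\gharmonic l-1,.\circ\proj l,l-1.$ takes the same value at $y\greendec$ and $y\reddec$, one has $F(y\greendec)-F(y\reddec)=f(y\greendec)-f(y\reddec)$, hence
\begin{equation*}
  \max\bigl\{\hilbnrm F(y\greendec).,\hilbnrm F(y\reddec).\bigr\}\ge\tfrac12\hilbnrm f(y\greendec)-f(y\reddec).\ge\frac{\varepsilon}{512\times n_{k+1}}\diam(Q) .
\end{equation*}

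Assuming without loss of generality that the maximum is attained at $y\greendec$, I would apply Lemma~\ref{lem:l2-energy-lwbd} to the cubical annulus $A\greendec\cup\gtsheet\greendec$, which is isometric to $Q$ with the concentric sub-cube $\gtsheet\greendec$ of sidelength $s\simeq\diam(Q)/n_{k+1}$ removed, taking $p=y\greendec\in\partial(sQ)$ and $\eta\simeq\varepsilon$ defined by $\eta s=\frac{\varepsilon}{512 n_{k+1}}\diam(Q)$. The one point requiring attention — and the main obstacle, since the constant produced by Lemma~\ref{lem:l2-energy-lwbd} now depends on it — is that $F|\partial\gtsheet\greendec$ must be Lipschitz with constant $\lesssim\glip f.$. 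This holds: $f|\partial\gtsheet\greendec$ is $\glip f.$-Lipschitz, while $\gharmonic l-1,.$ is harmonic in the interior of $Q$ and equals $f$ on $\partial Q$, so by the maximum principle its oscillation over $Q$ is $\lesssim\glip f.\diam(Q)$, and by Lemma~\ref{lem:harm-lip} — applied with a ball of radius $\simeq\slen(Q)$ about the central region of $Q$, which lies at distance $\gtrsim\slen(Q)$ from $\partial Q$ and contains $\proj l,l-1.(\partial\gtsheet\greendec)$ — it is $\lesssim\glip f.$-Lipschitz there. Lemma~\ref{lem:l2-energy-lwbd} then gives
\begin{equation*}
  \int_{A\greendec}\hilbnrm\nabla F.^2\,d\lebmeas 3.\ge c\hardec\eta^4 s^3\gtrsim\frac{c\hardec\varepsilon^4}{n^3_{k+1}}(\diam Q)^3 ,
\end{equation*}
with $c\hardec$ depending only on $\glip f.$.

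Finally I would deflate this Euclidean bound into an average over $\mu_l$ exactly as in Theorem~\ref{thm:gate-collapse}: since $\lebmeas 3.(A\greendec)=\lebmeas 3.(Q)\simeq(\diam Q)^3$, the measure $\mu_l\on A\greendec$ is a constant multiple of $\lebmeas 3.$ with density comparable to that of $\mu_{l-1}\on Q$, and $\mu_l(\projinv l,l-1.(Q))=\mu_{l-1}(Q)$, so $\av_{\projinv l,l-1.(Q)}\hilbnrm\nabla F.^2\,d\mu_l\gtrsim\frac{c\hardec\varepsilon^4}{n^3_{k+1}}$. Substituting this into the telescoped inequality above yields $\sum_{k\ge1}\sum_{l=\bar n_k+1}^{\bar n_{k+1}}\sum_{Q\in\badset.}\frac{\varepsilon^4}{n^3_{k+1}}\mu_{l-1}(Q)\lesssim(\glip f.)^2$, which is~(\ref{eq:l2-gate-collapse-s2}); the replacement of the exponent $2$ on $\varepsilon$ by $4$ is precisely the loss incurred in passing from Lemma~\ref{lem:energy-lwbd} to Lemma~\ref{lem:l2-energy-lwbd}.
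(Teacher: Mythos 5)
Your proposal is correct and faithfully executes the paper's own (omitted) proof, which the paper compresses into the remark that Theorem~\ref{thm:l2-gate-collapse} ``is proven like Theorem~\ref{thm:gate-collapse} using Lemma~\ref{lem:l2-energy-lwbd} instead of Lemma~\ref{lem:energy-lwbd}.'' You correctly identify and resolve the one nontrivial gap that this remark leaves to the reader, namely that the constant $c\hardec$ in Lemma~\ref{lem:l2-energy-lwbd} depends on the Lipschitz constant of $F|\partial(sQ)$, and you close it by observing that $F|\partial\gtsheet\greendec = f - \gharmonic l-1,.\circ\proj l,l-1.$ there and that $\gharmonic l-1,.$ is $\lesssim\glip f.$-Lipschitz near the concentric inner cube by the interior gradient estimate of Lemma~\ref{lem:harm-lip}, so the final constant depends only on $\glip f.$ as the theorem asserts.
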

\begin{thm}[Differentiation of Hilbert-valued Lipschitz functions]
  \label{thm:l2-diff}
  Let $f:X_\infty\to l^2$ be Lipschitz and let $\nabla f$ be its horizontal
  gradient. Then $f$ is differentiable $\mu_\infty$-a.e.~with derivative
  $\nabla f$: i.e.~for $\mu_\infty$-a.e.~$p$ one has:
  \begin{equation}
    \label{eq:l2-diff-s1}
    \biglip\bigl(
    f-\langle\nabla f(p),\vec x\rangle
\bigr)(p)=0.
  \end{equation}
\end{thm}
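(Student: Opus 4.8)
The plan is to mirror the proof of Theorem~\ref{thm:real-diff}, reusing Steps~1 through 4 verbatim, and replacing only Step~5 (the good-jump case), where the Intermediate Value Theorem was the only obstruction to handling vector-valued targets. First I would fix $\varepsilon>0$, and using that $\nabla f$ is defined a.e.~(via the Radon–Nikodym property of $l^2$, as in Definition~\ref{defn:hz-grad}) and is measurable, decompose $X_\infty=\Omega\cup\bigcup_t K_t$ with $\mu_\infty(\Omega)=0$, each $K_t$ compact of positive measure, and $\sup_{p\in K_t}\hilbnrm \nabla f(p)-c_t.\le\varepsilon$ for some $c_t\in\real^3$ (here $c_t$ is an $l^2$-valued tuple of three components, or rather a linear map $\real^3\to l^2$; abusing notation). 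Setting $F=f-\langle c_t,\vec x\rangle$, it suffices to show that for every compact $S\subset K_t$ of positive measure there is $\tilde S\subset S$ of positive measure with $\biglip F\le C\varepsilon$ on $\tilde S$, whence an exhaustion argument and $\varepsilon\searrow0$ give~(\ref{eq:l2-diff-s1}).

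Next I would invoke Theorem~\ref{thm:l2-gate-collapse} in place of Theorem~\ref{thm:gate-collapse}: the bad set $\badset.$ now consists of cubes $Q$ where some pair $y\greendec\in\partial\gtsheet\greendec$, $y\reddec\in\partial\gtsheet\reddec$ with $\proj l,l-1.(y\greendec)=\proj l,l-1.(y\reddec)$ satisfies~(\ref{eq:l2-gate-collapse-s1}), and the summability estimate~(\ref{eq:l2-gate-collapse-s2}) holds with $\varepsilon^4$ in place of $\varepsilon^2$. Exactly as in Step~2 of Theorem~\ref{thm:real-diff}, the doubling property (Lemma~\ref{lem:ver-doubling}) lets me thicken each bad gate to a neighborhood $G_\varepsilon(Q)$ of controlled measure, choose $k_0$ so that the tail sum $\sum_{k\ge k_0}\sum_l\sum_{Q\in\badset.}\mu_l(G_\varepsilon(Q))\le\mu_\infty(S)/5$, set $X\baddec=\bigcup_{k\ge k_0}\bigcup_l\bigcup_{Q\in\badset.}\projinv\infty,l.(G_\varepsilon(Q))$, and take $\tilde S$ to consist of the Lebesgue density points of $S\setminus X\baddec$ that are also approximate continuity points of $\nabla F$. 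For $p\in\tilde S$, a fundamental configuration $\fundsub p,,,X_\infty.$ at scale $r\le r_0(\varepsilon)$, and the horizontal paths (with at most one jump) $\gamma_q$ from Lemma~\ref{lem:good-hz-paths}, the argument of Step~3 controls the horizontal pieces by integrating $\hilbnrm\nabla F.$ along perturbed segments (using approximate continuity of $\nabla F$ at $p$ and the Kirszbraun-type Lipschitz estimate on harmonic approximations), and Step~4 handles a bad jump exactly as before, since there the smallness $d_{X_\infty}(c_1,c_2)\le\varepsilon d_{X_\infty}(p,q)$ comes purely from the definition of $\tilde S$ and does not reference the scalar nature of $F$.

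The only new work is Step~5, the good-jump case, where $\jp=(c\greendec,c\reddec)$ and $Q\notin\badset.$. Previously the Intermediate Value Theorem produced boundary points $z\greendec,z\reddec$ realizing the boundary averages of $F$; now instead I use the definition of ``not bad'' directly: since $Q\notin\badset.$, for \emph{every} pair $y\greendec\in\partial\gtsheet\greendec$, $y\reddec\in\partial\gtsheet\reddec$ with $\proj l,l-1.(y\greendec)=\proj l,l-1.(y\reddec)$ one has $\hilbnrm f(y\greendec)-f(y\reddec).<\frac{\varepsilon}{256 n_{k+1}}\diam(Q)$, and since $\diam(Q)\simeq n_{k+1}d_{X_\infty}(c\greendec,c\reddec)$ this gives $\hilbnrm F(y\greendec)-F(y\reddec).\lesssim\varepsilon d_{X_\infty}(p,q)$ (the linear correction $\langle c_t,\vec x\rangle$ agrees on $y\greendec$ and $y\reddec$ since $\proj l,l-1.(y\greendec)=\proj l,l-1.(y\reddec)$, so it cancels). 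Picking any such identified pair $y\greendec,y\reddec$ on the gate boundaries, I route the path as $p\to c\greendec\to y\greendec\to y\reddec\to c\reddec\to q$, applying Step~3's estimate to the four horizontal stretches (each within a bounded multiple of $d_{X_\infty}(p,q)$ in length, with at most a bounded number of short segments, as in Step~1 of Lemma~\ref{lem:good-hz-paths}) and the direct bound $\hilbnrm F(y\greendec)-F(y\reddec).\lesssim\varepsilon d_{X_\infty}(p,q)$ on the crossing. Combining the three cases gives $\sup_{q\in\fundsub p,,,X_\infty.}\hilbnrm F(p)-F(q).\le C(\glip f.)\varepsilon d_{X_\infty}(p,q)$, and a $(5\varepsilon r)$-density argument over the fundamental configuration upgrades this to $\biglip F\le C\varepsilon$ on $\tilde S$. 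The main obstacle is purely bookkeeping: tracking the worse $\varepsilon^4$ dependence from Lemma~\ref{lem:l2-energy-lwbd} and ensuring that the choice of $k_0$ and $r_0$ still only depends on $F$, $S$, $\varepsilon$ and universal constants; conceptually, replacing IVT by the stronger pointwise non-badness hypothesis is the entire content of the fix.
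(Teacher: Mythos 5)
Your proposal is correct and takes essentially the same route as the paper: the paper's proof of Theorem~\ref{thm:l2-diff} also reuses Steps~1--4 of Theorem~\ref{thm:real-diff} unchanged, and in Step~5 replaces the Intermediate Value Theorem by the pointwise non-badness condition of Theorem~\ref{thm:l2-gate-collapse} --- picking any identified pair $z\greendec\in\partial\gtsheet\greendec(Q)$, $z\reddec\in\partial\gtsheet\reddec(Q)$ with $\proj l,l-1.(z\greendec)=\proj l,l-1.(z\reddec)$, for which $\hilbnrm f(z\greendec)-f(z\reddec).\le\varepsilon d_{X_\infty}(z\greendec,z\reddec)\lesssim\varepsilon d_{X_\infty}(p,q)$, and routing $p\to c\greendec\to z\greendec\to z\reddec\to c\reddec\to q$. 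Your explicit observation that the linear correction $\langle c_t,\vec x\rangle$ cancels on identified pairs (because $\vec x$ factors through $\proj l,l-1.$) is implicit in the paper but worth stating, and your bookkeeping remark about the $\varepsilon^4$ dependence is accurate and harmless.
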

\begin{proof}
  From the proof of Theorem~\ref{thm:real-diff} we have only to modify
  the argument for \texttt{Step 5} as we cannot use the intermediate
  value theorem. However, as $\jp$ is not bad we know that for each
  pair $z\greendec\in\partial\gtsheet\greendec(Q)$,
  $z\reddec\in\partial\gtsheet\reddec(Q)$ with $\proj
  l,l-1.(z\greendec)=\proj l,l-1.(z\reddec)$ we have:
  \begin{equation}
    \label{eq:l2-diff-p1}
\begin{split}
  \hilbnrm f(z\greendec) -
  f(z\reddec).&\le\frac{\varepsilon}{256\times n_{k+1}}\diam(Q)\\
  &\le\varepsilon d_{X_\infty}(c_1,c_2)\le \varepsilon
  d_{X_\infty}(z\greendec, z\reddec),
\end{split}
\end{equation}
and $d_{X_\infty}(z\greendec,z\reddec)\le
27d_{X_\infty}(c_1,c_2)$. Having chosen such a pair we can then argue
as in \texttt{Step 5} of Theorem~\ref{thm:real-diff}.
\end{proof}
\bibliographystyle{alpha}
\bibliography{tritanopia-biblio}

\begin{thebibliography}{{Sch}15}

\bibitem[AK00]{ambrosio-rectifiability}
Luigi Ambrosio and Bernd Kirchheim.
\newblock Rectifiable sets in metric and {B}anach spaces.
\newblock {\em Math. Ann.}, 318(3):527--555, 2000.

\bibitem[Amb90]{ambrosio_metric_bv}
Luigi Ambrosio.
\newblock Metric space valued functions of bounded variation.
\newblock {\em Ann. Scuola Norm. Sup. Pisa Cl. Sci. (4)}, 17(3):439--478, 1990.

\bibitem[Bat15]{bate-diff}
D.~Bate.
\newblock Structure of measures in {L}ipschitz differentiability spaces.
\newblock {\em J. Amer. Math. Soc.}, 28(2):421--482, 2015.

\bibitem[BBI01]{burago-metbook}
Dmitri Burago, Yuri Burago, and Sergei Ivanov.
\newblock {\em A course in metric geometry}, volume~33 of {\em Graduate Studies
  in Mathematics}.
\newblock American Mathematical Society, Providence, RI, 2001.

\bibitem[BL15]{bate_rnp_new_definition}
D.~{Bate} and S.~{Li}.
\newblock {The geometry of Radon-Nikodym Lipschitz differentiability spaces}.
\newblock {\em ArXiv e-prints}, May 2015.

\bibitem[BS13]{bate_speight}
David Bate and Gareth Speight.
\newblock Differentiability, porosity and doubling in metric measure spaces.
\newblock {\em Proc. Amer. Math. Soc.}, 141(3):971--985, 2013.

\bibitem[Che99]{cheeger99}
J.~Cheeger.
\newblock Differentiability of {L}ipschitz functions on metric measure spaces.
\newblock {\em Geom. Funct. Anal.}, 9(3):428--517, 1999.

\bibitem[Che12]{cheeger-quant-diff}
Jeff Cheeger.
\newblock Quantitative differentiation: a general formulation.
\newblock {\em Comm. Pure Appl. Math.}, 65(12):1641--1670, 2012.

\bibitem[Cho58]{choksi-inverse-mealims}
J.~R. Choksi.
\newblock Inverse limits of measure spaces.
\newblock {\em Proc. London Math. Soc. (3)}, 8:321--342, 1958.

\bibitem[CK09]{cheeger_kleiner_radon}
Jeff Cheeger and Bruce Kleiner.
\newblock Differentiability of {L}ipschitz maps from metric measure spaces to
  {B}anach spaces with the {R}adon-{N}ikod\'ym property.
\newblock {\em Geom. Funct. Anal.}, 19(4):1017--1028, 2009.

\bibitem[CK13a]{cheeger_inverse_poinc}
J.~{Cheeger} and B.~{Kleiner}.
\newblock {Inverse limit spaces satisfying a Poincare inequality}.
\newblock {\em ArXiv e-prints}, December 2013.

\bibitem[CK13b]{cheeger_inverse_l1}
Jeff Cheeger and Bruce Kleiner.
\newblock Realization of metric spaces as inverse limits, and bilipschitz
  embedding in {$L_1$}.
\newblock {\em Geom. Funct. Anal.}, 23(1):96--133, 2013.

\bibitem[CKS16]{cks_metric_diff}
Jeff Cheeger, Bruce Kleiner, and Andrea Schioppa.
\newblock Infinitesimal {S}tructure of {D}ifferentiability {S}paces, and
  {M}etric {D}ifferentiation.
\newblock {\em Anal. Geom. Metr. Spaces}, 4:Art. 5, 2016.

\bibitem[{Eri}16]{sylvester-rnp}
S.~{Eriksson-Bique}.
\newblock {Classifying Poincar$\backslash$'e Inequalities and the local
  geometry of RNP-Differentiability Spaces}.
\newblock {\em ArXiv e-prints}, July 2016.

\bibitem[Eva98]{evans-pdebook}
Lawrence~C. Evans.
\newblock {\em Partial differential equations}, volume~19 of {\em Graduate
  Studies in Mathematics}.
\newblock American Mathematical Society, Providence, RI, 1998.

\bibitem[Fed69]{federer_gmt}
Herbert Federer.
\newblock {\em Geometric measure theory}.
\newblock Die Grundlehren der mathematischen Wissenschaften, Band 153.
  Springer-Verlag New York Inc., New York, 1969.

\bibitem[GT01]{gilbarg-elliptic-pde}
David Gilbarg and Neil~S. Trudinger.
\newblock {\em Elliptic partial differential equations of second order}.
\newblock Classics in Mathematics. Springer-Verlag, Berlin, 2001.
\newblock Reprint of the 1998 edition.

\bibitem[Hei07]{heinonen07}
Juha Heinonen.
\newblock Nonsmooth calculus.
\newblock {\em Bull. Amer. Math. Soc. (N.S.)}, 44(2):163--232, 2007.

\bibitem[HK98]{heinonen98}
Juha Heinonen and Pekka Koskela.
\newblock Quasiconformal maps in metric spaces with controlled geometry.
\newblock {\em Acta Math.}, 181(1):1--61, 1998.

\bibitem[Hun07]{hunter-matplotlib}
J.~D. Hunter.
\newblock Matplotlib: A 2d graphics environment.
\newblock {\em Computing In Science \& Engineering}, 9(3):90--95, 2007.

\bibitem[Kei04]{keith04}
Stephen Keith.
\newblock A differentiable structure for metric measure spaces.
\newblock {\em Adv. Math.}, 183(2):271--315, 2004.

\bibitem[Kir94]{kirchheim_metric_diff}
Bernd Kirchheim.
\newblock Rectifiable metric spaces: local structure and regularity of the
  {H}ausdorff measure.
\newblock {\em Proc. Amer. Math. Soc.}, 121(1):113--123, 1994.

\bibitem[KM11]{kleiner_mackay}
B.~{Kleiner} and J.~{Mackay}.
\newblock {Differentiable structures on metric measure spaces: A Primer}.
\newblock {\em ArXiv e-prints}, August 2011.

\bibitem[LP01]{lang_plaut}
Urs Lang and Conrad Plaut.
\newblock Bilipschitz embeddings of metric spaces into space forms.
\newblock {\em Geom. Dedicata}, 87(1-3):285--307, 2001.

\bibitem[LS05]{lang_nagata}
Urs Lang and Thilo Schlichenmaier.
\newblock Nagata dimension, quasisymmetric embeddings, and {L}ipschitz
  extensions.
\newblock {\em Int. Math. Res. Not.}, (58):3625--3655, 2005.

\bibitem[LS11]{lee-laaksofolds}
James~R. Lee and Anastasios Sidiropoulos.
\newblock Near-optimal distortion bounds for embedding doubling spaces into l1.
\newblock In {\em Proceedings of the Forty-third Annual ACM Symposium on Theory
  of Computing}, STOC '11, pages 765--772, New York, NY, USA, 2011. ACM.

\bibitem[{Sch}]{schioppa-pi-prescr}
Andrea {Schioppa}.
\newblock Prescribing (high-dimensional) tangents for $1$-dimensional pi-spaces
  (in preparation).

\bibitem[{Sch}15]{schioppa_neck_poinc}
A.~{Schioppa}.
\newblock {The Poincar{\'e} Inequality does not improve with blow-up}.
\newblock {\em ArXiv e-prints}, April 2015.

\bibitem[Sch16a]{deralb}
Andrea Schioppa.
\newblock Derivations and {A}lberti representations.
\newblock {\em Adv. Math.}, 293:436--528, 2016.

\bibitem[Sch16b]{dim_blow}
Andrea Schioppa.
\newblock The {L}ip-lip equality is stable under blow-up.
\newblock {\em Calc. Var. Partial Differential Equations}, 55(1):Art. 22, 30,
  2016.

\bibitem[Wea00]{weaver00}
Nik Weaver.
\newblock Lipschitz algebras and derivations. {II}. {E}xterior differentiation.
\newblock {\em J. Funct. Anal.}, 178(1):64--112, 2000.

\bibitem[Wic09]{wickham-ggplot2}
Hadley Wickham.
\newblock {\em ggplot2: Elegant Graphics for Data Analysis}.
\newblock Springer-Verlag New York, 2009.

\end{thebibliography}

\end{document}